\theoremstyle{plain}
\newtheorem{theorem}{Theorem}[subsection]
\newtheorem{lemma}[theorem]{Lemma}
\newtheorem{definition-theorem}[theorem]{Definition-Theorem}
\newtheorem{proposition}[theorem]{Proposition}
\newtheorem{corollary}[theorem]{Corollary}
\newtheorem{definition}[theorem]{Definition}
\newtheorem{example}[theorem]{Example}
\newtheorem{remark}[theorem]{Remark}
\newtheorem{conjecture}[theorem]{Conjecture}
\newtheorem{notation}[theorem]{Notation}
\newtheorem{assumption}[theorem]{Assumption}
\newtheorem*{maintheorem*}{Main Theorem}
\newcommand \bth[1] { \begin{theorem}\label{t#1} }
\newcommand \ble[1] { \begin{lemma}\label{l#1} }
\newcommand \bpr[1] { \begin{proposition}\label{p#1} }
\newcommand \bco[1] { \begin{corollary}\label{c#1} }
\newcommand \bde[1] { \begin{definition}\label{d#1}\rm }
\newcommand \bex[1] { \begin{example}\label{e#1}\rm }
\newcommand \bre[1] { \begin{remark}\label{r#1}\rm }
\newcommand \bcj[1] { \begin{conjecture}\label{j#1}\rm }
\newcommand \bnota[1] { \begin{notation}\label{n#1}\rm }
\renewcommand {\eth} { \end{theorem} }
\newcommand {\ele} { \end{lemma} }
\newcommand {\epr} { \end{proposition} }
\newcommand {\eco} { \end{corollary} }
\newcommand {\ede} { \end{definition} }
\newcommand {\eex} { \end{example} }
\newcommand {\ere} { \end{remark} }
\newcommand {\ecj} { \end{conjecture} }
\newcommand {\enota} { \end{notation} }
\newsavebox{\@brx}
\newcommand{\llangle}[1][]{\savebox{\@brx}{\(\m@th{#1\langle}\)}%
  \mathopen{\copy\@brx\kern-0.5\wd\@brx\usebox{\@brx}}}
\newcommand{\rrangle}[1][]{\savebox{\@brx}{\(\m@th{#1\rangle}\)}%
  \mathclose{\copy\@brx\kern-0.5\wd\@brx\usebox{\@brx}}}
\DeclareMathOperator{\Ext}{Ext}
 \DeclareMathOperator{\Proj}{Proj}
\DeclareMathOperator{\Aut}{Aut}
\DeclareMathOperator{\stmod}{{\sf stmod}}
\DeclareMathOperator{\StMod}{{ \sf StMod}}
\DeclareMathOperator{\Spc}{Spc}
\DeclareMathOperator{\CPSpc}{CP-Spc}
\DeclareMathOperator{\ev}{ev}
\DeclareMathOperator{\coev}{coev}
\newcommand{\rootsys}{\mathcal{R}}
\newcommand{\mf}{\mathfrak}
\newcommand{\mc}{\mathcal}
\newcommand{\id}{\operatorname{id}}
\newcommand{\kk}{\Bbbk}
\newcommand{\bK}{\mathbf K}
\newcommand{\bP}{\mathbf P}
\newcommand{\bQ}{\mathbf Q}
\newcommand{\bI}{\mathbf I}
\newcommand{\bJ}{\mathbf J}
\newcounter{listequation}
\numberwithin{equation}{subsection}
\begin{document}

\title[Tensor product property for support maps]{Noncommutative Tensor Triangular Geometry and the tensor product property for support maps}

\author[Daniel K. Nakano]{Daniel K. Nakano}
\address{Department of Mathematics \\
University of Georgia \\
Athens, GA 30602\\
U.S.A.}
\thanks{Research of D.K.N. was supported in part by NSF grants DMS-1701768 and DMS-2101941.}
\email{nakano@math.uga.edu}

\author[Kent B. Vashaw]{Kent B. Vashaw}
\address{
Department of Mathematics \\
Louisiana State University \\
Baton Rouge, LA 70803 \\
U.S.A.}
\thanks{Research of K.B.V. was supported by a Board of Regents LSU fellowship and in part by NSF grant DMS-1901830.}
\email{kvasha1@lsu.edu}
\author[Milen T. Yakimov]{Milen T. Yakimov}
\address{
Department of Mathematics \\
Northeastern University \\
Boston, MA 02115 \\
U.S.A.}
\thanks{Research of M.T.Y. was supported in part by NSF grants DMS-1901830 and DMS-2131243 and a Bulgarian Science Fund grant DN02/05.}
\email{m.yakimov@northeastern.edu}
\begin{abstract} 
The problem of whether the cohomological support map of a finite dimensional 
Hopf algebra has the tensor product property has attracted a lot of attention 
following the earlier developments on representations of finite group schemes. 
Many authors have focussed on concrete situations where positive and negative results 
have been obtained by direct arguments. 

In this paper we demonstrate that it is natural to study 
questions involving the tensor product property in the broader setting of 
a monoidal triangulated category. We give an intrinsic characterization
by proving that the tensor product property for the universal support datum is 
equivalent to complete primeness of the categorical spectrum. From these results one  
obtains information for other support data, including the cohomological one.
Two theorems are proved giving compete primeness and non-complete primeness 
in certain general settings. 

As an illustration of the methods, we give a proof of a recent conjecture of Negron and Pevtsova on the tensor product property for the 
cohomological support maps for the small quantum Borel algebras for all complex simple Lie algebras. 
\end{abstract}
\maketitle
\section{Introduction} \label{Intro}

\subsection{Monoidal Triangular Geometry} Tensor triangular geometry as introduced by Balmer has played a unifying role in understanding the interrelationships between 
representation theory, homological algebra and commutative ring theory/algebraic geometry. In \cite{NVY1}, the authors developed a 
noncommutative version of Balmer's {\em{tensor triangular geometry}} \cite{Balmer1}. Our new theory has the advantage that it 
can be applied to a wider variety of categories such as the stable module category for any finite-dimensional Hopf algebra. 
Given a monoidal triangulated category $\bK$, we associated
\begin{itemize}
\item a topological space $\Spc \bK$ of (thick) prime ideals and
\item a support datum map $V : \bK \to  \mc{X}_{sp}(\Spc \bK)$ to the set of specialization closed subsets of $\Spc \bK$, 
\end{itemize}
and we proved that this support datum is a universal final object in the category of all support data, see Theorem \ref{T:universality} below.

As in the case for non-commutative rings, for monoidal tensor categories, we demonstrated that it was important to distinguish various types of prime ideals. 
The definition of a {\em prime ideal} in this setting involves considering products of ideals whereas the definition of a {\em completely prime ideal} entails 
considering products of objects in the category. The notion of {\em semiprime ideal} is also a key concept in this new theory.

\subsection{Support Theory} The precursor to support data, namely {\em support varieties}, were first developed in the context of modular representations of 
finite groups by the pioneering work of Alperin and Carlson. Since that time, in representation theory (and in the more general setting of monoidal triangulated 
categories) there has been a plethora of contexts where support theory has been studied which includes 
\begin{enumerate}
\item[(i)] the cohomological support via group, Hopf algebra and Hochschild cohomology \cite{Carlson,SS}, 
\item[(ii)] the rank variety and $\Pi$-support via embedded subobjects \cite{Carlson,FP}, 
\item[(iii)] support via actions of commutative algebras \cite{BIK1,BIK2}, 
\item[(iv)] support via actions of the extended endomorphism ring of the identity object \cite{BKSS}, 
\item[(v)] support via tensor triangular geometry \cite{Balmer1}, 
\end{enumerate}
and other approaches. Many fundamental connections between these support theories have been established.

In the aforementioned cases, a {\em{support datum map}} is a map $\sigma$ from the objects of a monoidal triangulated category $\bK$ to the set of specialization closed subsets 
$\mc{X}_{sp}(X)$ of a topological space $X$. The following problem has attracted a lot of attention and has been at the heart of applications of support maps:
\medskip
\\
{\bf{Problem.}} When does a support datum $\sigma : \bK \to \mc{X}_{sp}(X)$ possess the {\em{tensor product property}}
\[
\sigma(A \otimes B) = \sigma (A) \cap \sigma(B), \quad \forall A, B \in \bK?
\] 

For the cohomological support for modular representations of finite groups this was proved in \cite{Carlson} and for finite group schemes in \cite{FP}. 
In the support setting in (iii), a positive answer was obtained in \cite{BIK1,BIK2} under a stratification assumption. There has been a great deal 
of research on this problem for the cohomological support for the stable module category $\StMod(H)$ of a finite dimensional Hopf
algebra $H$. In concrete situations positive and negative answers were obtained in \cite{BW1,FW1,NP,PW}.

\subsection{Main Results} The main goal of this paper is to illustrate how the tensor product property can be characterized in terms of the 
intrinsic structure of the underlying monoidal triangulated category. More specifically, the main results of this paper are as follows:
\begin{enumerate}
\item[(i)] Given a monoidal triangulated category, we prove that the universal support datum $V : \bK \to  \mc{X}_{sp}(\Spc \bK)$ has the tensor product property
if and only if all prime ideals of $\bK$ are completely prime (Theorem \ref{equiv}). 
\item[(ii)] We prove that if all thick right ideals of a monoidal triangulated category $\bK$ are two-sided, then the property in (i) holds for $\bK$ (Theorem \ref{compl-prime}). 
\item[(iii)] We show that if every object of a monoidal triangulated category is either left or right dualizable and the category has a nilpotent object, then 
the property in (i) does not hold for $\bK$ (Theorem \ref{non-compl-prime}). 
\end{enumerate}

The power of Theorem \ref{T:universality} is that the verification of the support property for individual objects of the category $\bK$ is shown to be equivalent to an intrinsic 
global property of the Balmer spectrum $\Spc \bK$ of the category. In noncommutative ring theory, the question of whether all prime deals of a noncommutative 
ring are completely prime is a much studied one. Dixmier proved that the universal enveloping algebra $U({\mathfrak{g}})$ of a finite dimensional Lie algebra 
has this property if and only if the Lie algebra ${\mathfrak{g}}$ is solvable \cite[Theorem 3.7.2]{Dixmier}. For general noncommutative rings there are no classification 
theorems of this sort; positive results for quantum function algebras and Cauchon--Goodearl--Letzter extensions were obtained in \cite{J1,J,GL}. 
Theorem \ref{T:universality} establishes a bridge between the tensor product property for support data and the categorical versions of these questions in ring theory. 

Theorems \ref{compl-prime} and \ref{T:universality} allow for a fast checking of the tensor product property in many interesting situations. This can be combined with 
\cite[Theorems 6.2.1 and 7.3.1]{NVY1} where we proved that support data satisfying natural assumptions coincide with the universal support map $V : \bK \to \mc{X}_{sp}(\Spc \bK)$.
One can use this to apply Theorems \ref{compl-prime} and \ref{T:universality} to verify whether other support maps for a monoidal triangulated category $\bK$, for instance the 
cohomological support map, possess the tensor product property. Along this path we obtain the last main result in the paper, proving the Negron and Pevtsova conjecture \cite{NP} that
\begin{itemize}
\item[(iv)] the cohomological support maps for all small quantum Borel algebras
associated to arbitrary complex simple Lie algebras and arbitrary choices of group-like elements possess the tensor product property. 
\end{itemize}

\subsection{Acknowledgements} We are grateful to the anonymous referee whose valuable comments and suggestions helped us to improve the results in the paper and the exposition.
We thank Cris Negron for useful discussions along with comments on an earlier version of our manuscript.

\section{Preliminaries on noncommutative tensor triangular geometry}

\subsection{Monoidal Triangulated Categories} 

We follow the conventions in \cite{NVY1}. A {\em{monoidal triangulated category}} (M$\Delta$C for short) is a monoidal category $\bK$ in the sense \cite[Definition 2.2.1]{EGNO1} which is triangulated and for which the monoidal structure $\otimes : \bK \times \bK \to \bK$ is an exact bifunctor. 

Recall that a {\em{thick subcategory}} of a triangulated category $\bK$ is a full triangulated subcategory of $\bK$ that contains all direct summands of its objects. 
A {\em{thick right}} (resp. {\em{two-sided}}) {\em{ideal}} of an M$\Delta$C, $\bK$, is a thick subcategory of $\bK$ that is closed under right tensoring (resp. right and left tensoring) with  
arbitrary objects of $\bK$. For each object $M \in \bK$ there exist unique minimal right and two-sided ideals containing $M$, which will be denoted by
$\langle M \rangle_{\mathrm{r}}$ and $\langle M \rangle$, respectively.

\subsection{Prime Ideals and the Balmer Spectrum} We call a proper two-sided ideal  $\bP$ of $\bK$ {\em{prime}} if 
\begin{equation}
\label{inclPIJ}
\bI \otimes \bJ \subseteq \bP \Rightarrow \bI \subseteq \bP \; \; \mbox{or} \; \; \bJ \subseteq \bP
\end{equation}
for all thick two-sided ideals $\bI$ and $\bJ$ of $\bK$. This property is equivalent to saying that \eqref{inclPIJ} holds for all pairs of 
thick right ideals $\bI$ and $\bJ$ of $\bK$. It is also equivalent to the condition that for all $A, B \in \bK$, 
\[
A \otimes C \otimes B \in \bP, \forall C \in \bK \Rightarrow  A \in \bP \; \; \mbox{or} \; \; B \in \bP,
\]
see \cite[Theorem 3.2.2]{NVY1}. 

One can define a notion of primeness on objects of $\bK$ as follows. An ideal $\bP$ is {\em completely prime} if and only if 
\[
A\otimes B\in \bP \Rightarrow A\in \bP \; \; \mbox{or} \; \; B \in \bP
\]
for all objects $A$ and $B$ in $\bK$. 

With these definitions of primeness, one can define a topological space that is analogous to the spectrum of a non-commutative ring. 

\begin{definition}\label{spc}
\begin{itemize}
\item[(a)] The noncommutative Balmer spectrum $\Spc \bK$ of an M$\Delta$C, $\bK$, is the set of its prime ideals with the topology generated by the 
closed sets 
\[
V(M) = \{ \bP \in \Spc \bK \mid M \not \in \bP \}
\]
for $M \in \bK$.
\item[(b)] Let $\CPSpc \bK$ be the topological subspace consisting of all completely prime ideals of $\bK$.
Its topology is generated by the sets 
\[
V_{CP}(M) = \{ \bP \in \CPSpc \bK \mid M \not \in \bP \}
\]
for $M \in \bK$.
\end{itemize} 
\end{definition}

From the definitions, one can easily verify that every completely prime ideal in an M$\Delta$C is prime. Therefore, one has 
$$ 
V_{CP}(M)=V(M)\cap \CPSpc \bK.
$$ 
It is clear that an intersection of prime ideals need not be a prime ideal. 
\begin{definition}
\label{semiprime-def}
A semiprime ideal of an M$\Delta$C, $\bK$, is an intersection of prime ideals of $\bK$.
\end{definition} 
The following characterization of semiprime ideals was proved in \cite[Theorem 3.4.2]{NVY1}:
\begin{theorem} 
\label{semiprime} The following are equivalent for a proper thick ideal $\bQ$ of an M$\Delta$C, $\bK$:
\begin{enumerate}
\item[(a)] $\bQ$ is a semiprime ideal;
\item[(b)] For all $A \in \bK$, if $A \otimes C \otimes A \in \bQ$, $\forall C \in \bK$, then $A \in \bQ;$
\item[(c)] If $\bI$ is any thick two-sided ideal of $\bK$ such that $\bI \otimes \bI \subseteq \bQ$, then $\bI \subseteq \bQ$;
\item[(d)] If $\bI$ is any thick right ideal of $\bK$ such that $\bI \otimes \bI \subseteq \bQ$, then $\bI \subseteq \bQ$.
\end{enumerate}
\end{theorem}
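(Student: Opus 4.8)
\emph{Strategy.} I would establish the equivalences via the easy implications (a)~$\Rightarrow$~(b), (b)~$\Rightarrow$~(c), (b)~$\Rightarrow$~(d), (c)~$\Rightarrow$~(b), (d)~$\Rightarrow$~(b), together with the one substantial implication (b)~$\Rightarrow$~(a). This last implication is the monoidal triangular analogue of the classical theorem that a semiprime ideal of a ring is an intersection of prime ideals, and I would prove it by adapting the $m$-system argument: given an object $A\notin\bQ$, one manufactures a prime ideal over $\bQ$ that avoids $A$.

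\emph{The routine implications.} For (a)~$\Rightarrow$~(b): if $\bQ=\bigcap_\alpha\bP_\alpha$ with each $\bP_\alpha$ prime and $A\otimes C\otimes A\in\bQ$ for all $C\in\bK$, then applying the object characterization of primeness recalled above (see \cite[Theorem~3.2.2]{NVY1}) with both distinguished objects equal to $A$ gives $A\in\bP_\alpha$ for every $\alpha$, whence $A\in\bQ$. For (b)~$\Rightarrow$~(c) and (b)~$\Rightarrow$~(d): if $\bI$ is a thick two-sided (respectively right) ideal with $\bI\otimes\bI\subseteq\bQ$ and $A\in\bI$, then $A\otimes C\in\bI$ for all $C$, so $A\otimes C\otimes A=(A\otimes C)\otimes A\in\bI\otimes\bI\subseteq\bQ$, and (b) forces $A\in\bQ$; hence $\bI\subseteq\bQ$. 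For the converses (d)~$\Rightarrow$~(b) and (c)~$\Rightarrow$~(b): given $A$ with $A\otimes C\otimes A\in\bQ$ for all $C$, apply the hypothesis to $\bI=\langle A\rangle_{\mathrm r}$ (respectively $\bI=\langle A\rangle$). Since the functors $(-)\otimes W$ and $W\otimes(-)$ are exact, a standard induction over thick subcategories reduces the claim $\bI\otimes\bI\subseteq\bQ$ to checking that tensor products of generators of $\bI$ lie in $\bQ$; for $\bI=\langle A\rangle_{\mathrm r}$ this reads $(A\otimes V)\otimes(A\otimes V')=(A\otimes V\otimes A)\otimes V'\in\bQ$, which holds because $A\otimes V\otimes A\in\bQ$ and $\bQ$ is an ideal (the two-sided case, with generators $U\otimes A\otimes V$, is the identical computation with an extra left tensor factor). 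The hypothesis then gives $\bI\subseteq\bQ$, so $A\in\bQ$.

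\emph{The key implication (b)~$\Rightarrow$~(a).} Assume (b) and fix $A\notin\bQ$; it suffices to produce a prime ideal $\bP\supseteq\bQ$ with $A\notin\bP$, for then $\bQ$ is the intersection of all prime ideals containing it. Using the contrapositive of (b) together with dependent choice, construct objects $A_0=A$ and $A_{n+1}=A_n\otimes C_{n+1}\otimes A_n$ with $C_{n+1}\in\bK$ chosen so that $A_{n+1}\notin\bQ$; such a choice exists because, by (b), $A_n\notin\bQ$ implies $A_n\otimes C\otimes A_n\notin\bQ$ for some $C$. Put $S=\{A_n\mid n\ge 0\}$, a set of objects disjoint from $\bQ$. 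The union of a chain of thick two-sided ideals is again a thick two-sided ideal, and disjointness from $S$ is preserved under such unions, so Zorn's lemma yields a thick two-sided ideal $\bP$ maximal among those that contain $\bQ$ and are disjoint from $S$; note $\bP$ is proper since $A\in S$.

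\emph{Primality of $\bP$, and the main obstacle.} To see that $\bP$ is prime, suppose $X\otimes C\otimes Y\in\bP$ for all $C\in\bK$ while $X\notin\bP$ and $Y\notin\bP$. Let $\bI$ and $\bJ$ be the thick two-sided ideals generated by $\bP$ together with $X$, respectively $Y$. Each properly contains $\bP$ and contains $\bQ$, so by maximality of $\bP$ each must meet $S$: say $A_i\in\bI$ and $A_j\in\bJ$ with $i\le j$. Since $\bI$ is a two-sided ideal containing $A_i$, the recursion forces $A_i,A_{i+1},\dots,A_j\in\bI$, so $A_j\in\bI\cap\bJ$. The same exactness-plus-induction argument as in the routine implications — the only nontrivial case being the product of generators $(U\otimes X\otimes V)\otimes(U'\otimes Y\otimes V')=U\otimes\bigl(X\otimes(V\otimes U')\otimes Y\bigr)\otimes V'$, which lies in $\bP$ by the hypothesis $X\otimes C\otimes Y\in\bP$ for all $C$ — shows $\bI\otimes\bJ\subseteq\bP$. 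But then $A_j\otimes C_{j+1}\in\bI$ and $A_j\in\bJ$ give $A_{j+1}=(A_j\otimes C_{j+1})\otimes A_j\in\bI\otimes\bJ\subseteq\bP$, contradicting $\bP\cap S=\emptyset$. Hence $\bP$ is prime, and $A=A_0\in S$ gives $A\notin\bP$, which completes the proof that $\bQ$ is semiprime. I expect this last step — verifying that the maximal ideal $\bP$ is prime — to be the main obstacle: the recursion defining the $A_n$ must be arranged so that $A_{j+1}$ is visibly of the form ``$A_j\otimes C\otimes A_j$'', which is what the inclusion $\bI\otimes\bJ\subseteq\bP$ then contradicts; the naive commutative choice $S=\{A^{\otimes n}\}$ fails here precisely because $A^{\otimes 2}$ may lie in $\bQ$ even when $A$ does not.
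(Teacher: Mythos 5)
Your proof is correct, and since the paper itself simply cites \cite[Theorem 3.4.2]{NVY1} without reproducing the argument, there is no in-paper proof to compare against; the adaptation of the classical $m$-system/Zorn argument that you use is the natural route and is what that reference does. Two small expository remarks: the ``standard induction over thick subcategories'' in your routine implications is legitimate but deserves one sentence — the point is that $\langle A\rangle_{\mathrm r}$ is precisely the thick closure of $\{A\otimes V : V\in\bK\}$ (this thick closure is automatically closed under right tensoring, by the usual ``the set of $X$ with $X\otimes W$ in a thick subcategory is thick'' argument), so checking the claim on those generators and iterating in each variable does give $X\otimes Y\in\bQ$ for all $X,Y\in\langle A\rangle_{\mathrm r}$, whence the thick two-sided ideal $\langle A\rangle_{\mathrm r}\otimes\langle A\rangle_{\mathrm r}\subseteq\bQ$; and in the key step of (b)~$\Rightarrow$~(a) it is worth spelling out that $\bI$ is the thick closure of $\bP\cup\{U\otimes X\otimes V\}$ so the same two-step induction applies to give $\bI\otimes\bJ\subseteq\bP$ from the generator computation you record.
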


\subsection{Support data maps, universality of $\Spc \bK$} 
\label{support}
One of the important features about monoidal triangulated 
categories is the use of maps that take objects of $\bK$ to subsets of a topological space. 
For a given topological space $Y$, we will denote by $\mc{X}(Y)$, $\mc{X}_{cl}(Y)$ and $\mc{X}_{sp}(Y)$ the collections of its
subsets, closed subsets and specialization closed subsets, respectively.
Given a map $\sigma : \bK \to \mc{X}(Y)$, denote its extension to the set of thick subcategories of $\bK$ given by
\begin{equation}
\label{Phi}
\Phi_\sigma(\bI) = \bigcup_{A \in \bI} \sigma(A).
\end{equation}
\begin{definition}
\label{supp}
A support datum for an M$\Delta$C, $\bK$, is a map
\[
\sigma : \bK \to \mc{X}(Y)
\]
for a topological space $Y$ such that
\smallskip
\begin{enumerate}
\item[(i)] $\sigma(0)=\varnothing $ and $ \sigma(1)= Y$;
\item[(ii)] $\sigma(A\oplus B)=\sigma(A)\cup \sigma(B)$, $\forall A, B \in \bK$; 
\item[(iii)] $\sigma(\Sigma A)=\sigma(A)$, $\forall A \in \bK$; 
\item[(iv)] If $A \to B \to C \to \Sigma A$ is a distinguished triangle, then $\sigma(A) \subseteq \sigma(B) \cup \sigma(C)$:
\item[(v)] $\bigcup_{C \in \bK} \sigma (A   \otimes C \otimes B) = \sigma (A) \cap \sigma(B)$, $\forall A, B \in \bK$. 
\end{enumerate}
\smallskip
A weak support datum is a map
\[
\sigma : \bK \to \mc{X}(Y)
\]
which satisfies conditions (i)-(iv) and the condition 
\smallskip
\begin{enumerate}
\item[(v')] $\Phi_\sigma(\bI \otimes \bJ) = \Phi_\sigma(\bI) \cap \Phi_\sigma(\bJ)$ for all thick two-sided ideals $\bI$ and $\bJ$ of $\bK$.
\end{enumerate}
\smallskip
A quasi support datum is a map
\[
\sigma : \bK \to \mc{X}(Y)
\]
which satisfies conditions (i-iv) and the condition 
\smallskip
\begin{enumerate}
\item[(v'')] $\sigma(A \otimes B)\subseteq \sigma (A)$, for all $A, B \in \bK$. 
\end{enumerate}
\end{definition}
We note that condition (v'') is equivalent to requiring that $\Phi_\sigma ( \langle A \rangle_{\mathrm{r}} ) = \sigma (A)$ for all $A \in \bK$.
For M$\Delta$Cs, $\bK$ admitting arbitrary set indexed coproducts, we will consider maps $\sigma : \bK \to \mc{X}(Y)$ satisfying 
the stronger property
\smallskip
\begin{enumerate}
\item[(ii')] $\sigma(\bigoplus_{i \in I} A_i)=\bigcup_{i \in I} \sigma(A_i )$, $\forall A_i \in \bK$ and all sets $I$
\end{enumerate}
\smallskip
in place of property (ii); this property will be explicitly mentioned when used.

Each support datum is a weak support datum \cite[Lemma 4.3.1 and 4.5.1]{NVY1}. For every  M$\Delta$C $\bK$, the map 
\[
V : \bK \to \mc{X}_{cl} (\Spc \bK) \quad 
\mbox{given by} \quad
V(A) = \{ \bP  \in  \Spc \bK : M \notin \bP\}
\]
is a support datum. It is universal as proved in \cite[Theorems 4.2.2 and 4.5.1]{NVY1}:
\begin{theorem} \label{T:universality}
Let $\bK$ be an M$\Delta$C.
\begin{enumerate}
\item[(a)] {\em{The support $V$ is the final object in the collection of support data $\sigma$ for $\bK$ such that $\sigma(A)$ is closed for each $A \in \bK$: 
for any such $\sigma : \bK \to \mc{X}(Y)$, there is a unique continuous map $f_\sigma: Y \to \Spc \bK$ satisfying 
\[
\sigma(A)= f_\sigma^{-1}( V(A)) \quad
\mbox{for} 
\quad A \in \bK.
\]
}}
\item[(b)] {\em{The support $V$ is the final object in the collection of weak support data $\sigma$ for $\bK$ such that $\Phi_{\sigma}(\langle A \rangle)$ is closed 
for each $A \in \bK$: for any such $\sigma : \bK \to \mc{X}(Y)$, there is a unique continuous map $f_\sigma: Y \to \Spc \bK$ satisfying 
\[
\Phi_{\sigma}(\langle A \rangle)= f_\sigma^{-1}(V(A)) \quad
\mbox{for} 
\quad A \in \bK.
\]
}}
\end{enumerate}
\end{theorem}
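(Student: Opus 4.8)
The plan is to adapt Balmer's proof of the universal property of the tensor triangular spectrum \cite{Balmer1} to the monoidal setting, using the object-level characterization of prime ideals recalled above (from \cite[Theorem~3.2.2]{NVY1}) in place of the tensor product property of supports that is available in the symmetric case. Throughout, $\mathbf 1$ denotes the unit object of $\bK$ and $\langle A \rangle \vee \langle B \rangle$ denotes the smallest thick two-sided ideal containing $\langle A \rangle$ and $\langle B \rangle$.

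\emph{Part (a).} Given a support datum $\sigma \colon \bK \to \mc{X}(Y)$ with every $\sigma(A)$ closed, set
\[
\bP_y := \{\, A \in \bK \mid y \notin \sigma(A) \,\}, \qquad y \in Y .
\]
The first step is to check that $\bP_y$ is a prime ideal. Isomorphism invariance of $\sigma$ (a formal consequence of (i) and (iv)) makes $\bP_y$ closed under isomorphism; axiom (i) gives $y \in \sigma(\mathbf 1) = Y$, hence $\mathbf 1 \notin \bP_y$ and $\bP_y$ is proper; (ii) gives closure under retracts; (iii) gives closure under shifts; and (iv), applied to the rotations of a distinguished triangle $A \to B \to C \to \Sigma A$, gives the two-out-of-three property. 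So $\bP_y$ is a proper thick subcategory. Taking $C = \mathbf 1$ in axiom (v) gives $\sigma(A \otimes B) \subseteq \bigcup_{C \in \bK} \sigma(A \otimes C \otimes B) = \sigma(A) \cap \sigma(B)$, whence $A \in \bP_y$ forces $A \otimes B, B \otimes A \in \bP_y$; thus $\bP_y$ is a two-sided ideal. Finally, if $A \otimes C \otimes B \in \bP_y$ for all $C \in \bK$ then $y \notin \sigma(A) \cap \sigma(B)$, so $A \in \bP_y$ or $B \in \bP_y$, which by the object-level characterization means $\bP_y$ is prime. Hence $f_\sigma \colon Y \to \Spc \bK$, $f_\sigma(y) = \bP_y$, is well defined, and directly from the definitions
\[
f_\sigma^{-1}(V(A)) = \{\, y \in Y \mid A \notin \bP_y \,\} = \sigma(A) .
\]
Since the closed sets of $\Spc \bK$ are precisely the intersections of the $V(A)$ (finite unions being absorbed via $V(A) \cup V(B) = V(A \oplus B)$) and each $\sigma(A)$ is closed by hypothesis, this shows at once that $f_\sigma$ is continuous and that $\sigma = f_\sigma^{-1} \circ V$. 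For uniqueness, any continuous map $g \colon Y \to \Spc \bK$ with $g^{-1}(V(A)) = \sigma(A)$ for all $A$ satisfies $A \notin g(y) \Leftrightarrow y \in \sigma(A) \Leftrightarrow A \notin \bP_y$ for every $y$ and $A$, so $g(y) = \bP_y = f_\sigma(y)$.

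\emph{Part (b).} Given a weak support datum $\sigma$ with every $\Phi_\sigma(\langle A \rangle)$ closed, set $\widetilde\sigma(A) := \Phi_\sigma(\langle A \rangle)$; the plan is to show $\widetilde\sigma$ is a support datum with closed values and then invoke part (a), taking $f_\sigma := f_{\widetilde\sigma}$. Axioms (i)--(iv) for $\widetilde\sigma$ follow from the identities $\langle 0 \rangle = 0$, $\langle \mathbf 1 \rangle = \bK$, $\langle \Sigma A \rangle = \langle A \rangle$, $\langle A \oplus B \rangle = \langle A \rangle \vee \langle B \rangle$ and $\langle A \rangle \subseteq \langle B \rangle \vee \langle C \rangle$ for a distinguished triangle $A \to B \to C \to \Sigma A$, together with the fact that $\Phi_\sigma$ sends a join of two thick two-sided ideals to the union of their images (because the thick subcategory generated by two such ideals is again a two-sided ideal, so every object of the join is obtained from objects of the two ideals by cones, retracts and shifts, to which axioms (ii)--(iv) of $\sigma$ apply). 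Axiom (v) for $\widetilde\sigma$ reads $\bigcup_{C \in \bK} \Phi_\sigma(\langle A \otimes C \otimes B \rangle) = \Phi_\sigma(\langle A \rangle) \cap \Phi_\sigma(\langle B \rangle)$, and by (v$'$) the right-hand side equals $\Phi_\sigma(\langle A \rangle \otimes \langle B \rangle)$; so the task is to show that $\langle A \rangle \otimes \langle B \rangle$ and the thick two-sided ideal $\bJ$ generated by all $A \otimes C \otimes B$, $C \in \bK$, have the same image under $\Phi_\sigma$. One inclusion is immediate since $A \otimes C \otimes B \in \langle A \rangle \otimes \langle B \rangle$ (as $A \otimes C \in \langle A \rangle$), giving $\Phi_\sigma(\bJ) \subseteq \Phi_\sigma(\langle A \rangle \otimes \langle B \rangle)$. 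For the reverse I would prove $\langle A \rangle \otimes \langle B \rangle \subseteq \bJ$ in two steps: first, the thick subcategory $\{\, Y \in \bK \mid A \otimes Y \in \bJ \,\}$ contains every generator $W \otimes B \otimes W'$ of $\langle B \rangle$ — since $A \otimes W \otimes B \in \bJ$ and $\bJ$ is closed under right tensoring — hence contains $\langle B \rangle$; second, the thick subcategory $\{\, X \in \bK \mid X \otimes Y \in \bJ \text{ for all } Y \in \langle B \rangle \,\}$ contains every generator $W \otimes A \otimes W'$ of $\langle A \rangle$ — since $(W \otimes A \otimes W') \otimes Y = W \otimes \bigl( A \otimes (W' \otimes Y) \bigr) \in \bJ$ by the first step (because $W' \otimes Y \in \langle B \rangle$) and closure of $\bJ$ under left tensoring — hence contains $\langle A \rangle$. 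Thus every generator $X \otimes Y$ of $\langle A \rangle \otimes \langle B \rangle$ lies in $\bJ$, as required. With $\widetilde\sigma$ verified to be a closed-valued support datum, part (a) supplies the unique continuous map $f_{\widetilde\sigma}$ with $\Phi_\sigma(\langle A \rangle) = \widetilde\sigma(A) = f_{\widetilde\sigma}^{-1}(V(A))$, and the uniqueness clause in (b) follows from the one in (a) applied to $\widetilde\sigma$.

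The main obstacle I anticipate is the final computation in part (b): proving that $\Phi_\sigma$ agrees on $\langle A \rangle \otimes \langle B \rangle$ and on the two-sided ideal generated by the $A \otimes C \otimes B$. This is exactly where noncommutativity bites — in the symmetric case one has the clean equality $\langle A \rangle \otimes \langle B \rangle = \langle A \otimes B \rangle$, while here one must trace products of ideals through their generators and repeatedly use the closure properties of thick two-sided ideals. The object-level reformulations of ideal membership and of (semi)primeness recorded in the preliminaries are what keep this manageable; every other step is a routine translation of the support-datum axioms into the language of ideals of $\bK$.
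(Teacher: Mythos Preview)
The paper does not actually prove this theorem: it is quoted from the authors' earlier work \cite[Theorems 4.2.2 and 4.5.1]{NVY1}, so there is no in-paper proof to compare against. That said, your argument is correct and is precisely the expected adaptation of Balmer's universality proof to the monoidal setting, with the object-level characterization of primes from \cite[Theorem 3.2.2]{NVY1} replacing the tensor product property; this is almost certainly the same route taken in \cite{NVY1}. One small point worth making explicit in part (b): your identification of $\bigcup_C \Phi_\sigma(\langle A\otimes C\otimes B\rangle)$ with $\Phi_\sigma(\bJ)$ tacitly uses that $\bJ=\bigcup_C \langle A\otimes C\otimes B\rangle$ as sets, which holds because $\langle A\otimes C_1\otimes B\rangle \vee \langle A\otimes C_2\otimes B\rangle \subseteq \langle A\otimes (C_1\oplus C_2)\otimes B\rangle$, so the union is already a thick two-sided ideal.
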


\label{Background}
\section{The tensor product property for the universal support datum of a monoidal triangulated category} 

\subsection{Complete Primeness of $\Spc$ and the Tensor Product Property} We begin by proving a theorem 
that indicates how the structural properties of a monoidal triangulated category are captured by characterizations 
involving the universal support datum. 

\begin{theorem}
\label{equiv} 
For every monoidal triangulated category $\bK$, the following are equivalent: 
\begin{enumerate}
\item[(a)] The universal support datum $V : \bK \to \mc{X} (\Spc \bK)$ has the tensor product property 
\[
V( A \otimes B) = V(A) \cap V(B), \quad \forall A, B \in \bK. 
\] 
\item[(b)] Every prime ideal of $\bK$ is completely prime.
\end{enumerate} 
\end{theorem}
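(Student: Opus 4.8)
The plan is to unwind the definitions on both sides of the equivalence and observe that one of the two containments in the tensor product property is automatic, so that the statement reduces exactly to the contrapositive of complete primeness applied uniformly over all of $\Spc \bK$.

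First I would record the elementary observation that for \emph{every} M$\Delta$C $\bK$, every proper two-sided ideal $\bP$, and all objects $A, B \in \bK$, one has
\[
A \in \bP \ \text{ or } \ B \in \bP \ \Longrightarrow \ A \otimes B \in \bP ,
\]
simply because a two-sided ideal is closed under both left and right tensoring by arbitrary objects. Taking contrapositives over the prime ideals of $\bK$, this says precisely that $V(A \otimes B) \subseteq V(A) \cap V(B)$ holds unconditionally. Hence the content of (a) is entirely the reverse inclusion $V(A) \cap V(B) \subseteq V(A \otimes B)$, i.e. the assertion that for every prime $\bP$, if $A \notin \bP$ and $B \notin \bP$ then $A \otimes B \notin \bP$.

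For (a) $\Rightarrow$ (b): let $\bP$ be a prime ideal and suppose $A \otimes B \in \bP$, i.e. $\bP \notin V(A \otimes B)$. By (a), $\bP \notin V(A) \cap V(B)$, so $\bP \notin V(A)$ or $\bP \notin V(B)$, that is, $A \in \bP$ or $B \in \bP$; thus $\bP$ is completely prime. For (b) $\Rightarrow$ (a): by the preceding paragraph it suffices to prove $V(A) \cap V(B) \subseteq V(A \otimes B)$, so let $\bP \in V(A) \cap V(B)$, meaning $A \notin \bP$ and $B \notin \bP$; complete primeness of $\bP$ then rules out $A \otimes B \in \bP$, whence $\bP \in V(A \otimes B)$.

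I do not expect a serious obstacle here: the theorem is essentially a dictionary entry translating a statement about the support map $V$ into a statement about all points of the spectrum. The only points requiring care are (i) isolating the automatic inclusion $V(A\otimes B)\subseteq V(A)\cap V(B)$ and noticing that it uses that $\bP$ is a two-sided ideal (not merely a thick subcategory), and (ii) keeping the two contrapositives straight. Once the automatic inclusion is separated out, both implications are immediate.
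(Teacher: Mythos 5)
Your proof is correct and takes essentially the same route as the paper: both directions are obtained by unwinding the definition of $V$ at a single prime $\bP$ and matching it against the definition of complete primeness. The only cosmetic difference is that you first isolate the automatic inclusion $V(A\otimes B)\subseteq V(A)\cap V(B)$ (coming from $\bP$ being a two-sided ideal), whereas the paper folds both inclusions into a single complement computation in the $(b)\Rightarrow(a)$ direction; the mathematical content is identical.
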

\begin{proof}
(a $\Rightarrow$ b) Let $\bP \in \Spc \bK$ and $A, B \in \bK$ be such that $A \otimes B \in \bP$. Then 
\[
\bP \notin V(A \otimes B) = V(A) \cap V(B).
\]
Hence, either $\bP \notin V(A)$ or $\bP \notin V(B)$, and thus, either $A \in \bP$ or $B \in \bP$. 

(b $\Rightarrow$ a) For $A, B \in \bK$, we have
\begin{align*} 
\Spc \bK \backslash V(A \otimes B) &= \{ \bP \in \Spc \bK \mid A \otimes B \in \bP \} \\
&= \{ \bP \in \Spc \bK \mid A \in \bP \} \cup   \{ \bP \in \Spc \bK \mid B \in \bP \} \\
&= (\Spc \bK \backslash V(A)) \cup (\Spc \bK \backslash V(B)).  
\end{align*}
Thus $V(A \otimes B)  = V(A) \cap V(B)$. 
\end{proof}

The proof of Theorem \ref{equiv} immediately gives the following fact.
\begin{corollary}
\label{cor-compl-prime} For every monoidal triangulated category, $\bK$, the map
\[
V_{\mathrm{CP}} : \bK \to \CPSpc \bK \quad 
\mbox{given by} \quad
V_{\mathrm{CP}} (A) = V(A) \cap \CPSpc \bK
\]
has the tensor product property.
\end{corollary}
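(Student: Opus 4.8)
The plan is to recycle the computation from the implication (b)$\Rightarrow$(a) in the proof of Theorem~\ref{equiv} essentially verbatim, observing that that argument only ever used complete primeness of the ideals in question, and membership in $\CPSpc \bK$ means exactly that. First I would fix objects $A, B \in \bK$ and unwind the definition of the complement inside $\CPSpc \bK$: using that $V_{\mathrm{CP}}(M) = V(M) \cap \CPSpc \bK$ coincides with the generating closed set $V_{CP}(M)$ of Definition~\ref{spc}(b), one has $\CPSpc \bK \setminus V_{\mathrm{CP}}(A \otimes B) = \{\bP \in \CPSpc \bK \mid A \otimes B \in \bP\}$.

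Next, for each $\bP$ in this set, complete primeness of $\bP$ yields $A \in \bP$ or $B \in \bP$; conversely, if $A \in \bP$ or $B \in \bP$ then $A \otimes B \in \bP$ because $\bP$ is a two-sided (in particular, right) thick ideal. Hence $\{\bP \in \CPSpc \bK \mid A \otimes B \in \bP\} = \{\bP \in \CPSpc \bK \mid A \in \bP\} \cup \{\bP \in \CPSpc \bK \mid B \in \bP\}$, and the right-hand side is $(\CPSpc \bK \setminus V_{\mathrm{CP}}(A)) \cup (\CPSpc \bK \setminus V_{\mathrm{CP}}(B))$. Taking complements within $\CPSpc \bK$ and applying De Morgan's law then gives $V_{\mathrm{CP}}(A \otimes B) = V_{\mathrm{CP}}(A) \cap V_{\mathrm{CP}}(B)$, which is the tensor product property.

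I do not anticipate a real obstacle here — phrasing the support theory through $\CPSpc$ is precisely what makes this automatic. The only bookkeeping worth a line is the compatibility of notation, namely that $V_{\mathrm{CP}}$ is indeed the map whose images generate the topology on $\CPSpc \bK$, and this is already recorded in the displayed identity $V_{CP}(M) = V(M) \cap \CPSpc \bK$ in the excerpt. If one wished, one could additionally note that $V_{\mathrm{CP}}$ is itself a support datum for $\bK$ valued in $\CPSpc \bK$: conditions (i)--(iv) of Definition~\ref{supp} are inherited from $V$ upon intersecting with $\CPSpc \bK$, and condition (v) follows from the tensor product property just proved together with $A \otimes 1 \otimes B \cong A \otimes B$ and $\langle A \otimes C \otimes B \rangle \subseteq \langle A \rangle$; however, this is not required by the stated corollary.
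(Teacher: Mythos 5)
Your argument is exactly the one the paper intends: the corollary is stated there with the single line ``The proof of Theorem \ref{equiv} immediately gives the following fact,'' and your write-up simply spells out the (b)$\Rightarrow$(a) computation restricted to $\CPSpc \bK$, where complete primeness is automatic. Correct and essentially identical to the paper's route.
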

In many cases for monoidal triangulated categories, $\bK$, the space $\CPSpc \bK$ can be much smaller than $\Spc \bK$. So in general, the support datum $V_{\mathrm{CP}}$ captures much 
less information than the universal support datum $V$.

\subsection{A Criterion for Complete Primeness of $\Spc \bK$} In this section we investigate monoidal tensor categories where the right ideals 
coincide with the two-sided ideals. In this situation, every prime ideal is completely prime and the tensor product property holds. This key observation will 
be applied in Section~\ref{q-Borels}. 

\begin{theorem}
\label{compl-prime} Let $\bK$ be a monoidal triangulated category in which every thick right ideal is two-sided. Then every prime ideal of $\bK$ is completely prime, 
and as a consequence, the universal support datum $V : \bK \to \mc{X} (\Spc \bK)$ has the tensor product property 
\[
V( A \otimes B) = V(A) \cap V(B), \quad \forall A, B \in \bK. 
\] 
\end{theorem}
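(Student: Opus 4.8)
The plan is to reduce the statement to Theorem \ref{equiv}: once we know that every prime ideal of $\bK$ is completely prime, the tensor product property for $V$ follows immediately from the implication (b $\Rightarrow$ a) already established. So the entire content is the first assertion, that under the hypothesis ``every thick right ideal is two-sided'' every prime ideal is completely prime.

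First I would take a prime ideal $\bP$ and objects $A, B \in \bK$ with $A \otimes B \in \bP$, aiming to show $A \in \bP$ or $B \in \bP$. The natural move is to pass from objects to the right ideals they generate: consider $\langle A \rangle_{\mathrm{r}}$ and $\langle B \rangle_{\mathrm{r}}$. The key point is that $\langle A \rangle_{\mathrm{r}} \otimes \langle B \rangle_{\mathrm{r}} \subseteq \bP$. To see this, note that $\langle A \rangle_{\mathrm{r}} \otimes \langle B \rangle_{\mathrm{r}}$ is generated as a thick right ideal by objects of the form $X \otimes Y$ where $X \in \langle A \rangle_{\mathrm{r}}$ and $Y \in \langle B \rangle_{\mathrm{r}}$; since $\bP$ is thick and a right ideal, it suffices to check $X \otimes Y \in \bP$ for such generators, and in fact one reduces further: $\langle A \rangle_{\mathrm{r}}$ is built from $A$ by shifts, cones, summands, and right tensoring, so $X \otimes Y$ for $X = A \otimes Z$ lands in $\bP$ because $A \otimes B \in \bP$ and $\bP$ is a two-sided ideal (here is where the hypothesis enters, via the fact that $\bP$, being prime, is two-sided by definition, but more importantly we will need right ideals to be two-sided in the next step). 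I would make this precise by an induction on the construction of $\langle A \rangle_{\mathrm{r}}$ and $\langle B \rangle_{\mathrm{r}}$, using that each of the operations (shift, cone, summand, tensoring) is respected by the thick two-sided ideal $\bP$.

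Next, using the hypothesis that every thick right ideal of $\bK$ is two-sided, both $\langle A \rangle_{\mathrm{r}}$ and $\langle B \rangle_{\mathrm{r}}$ are two-sided ideals. Then the containment $\langle A \rangle_{\mathrm{r}} \otimes \langle B \rangle_{\mathrm{r}} \subseteq \bP$ together with the primeness of $\bP$ (in the form \eqref{inclPIJ}) gives $\langle A \rangle_{\mathrm{r}} \subseteq \bP$ or $\langle B \rangle_{\mathrm{r}} \subseteq \bP$. Since $A \in \langle A \rangle_{\mathrm{r}}$ and $B \in \langle B \rangle_{\mathrm{r}}$, we conclude $A \in \bP$ or $B \in \bP$, so $\bP$ is completely prime. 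Finally, applying Theorem \ref{equiv}, (b $\Rightarrow$ a), yields the tensor product property $V(A \otimes B) = V(A) \cap V(B)$ for all $A, B \in \bK$.

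The step I expect to be the main obstacle is the verification that $\langle A \rangle_{\mathrm{r}} \otimes \langle B \rangle_{\mathrm{r}} \subseteq \bP$ — in particular, getting the generation/induction bookkeeping exactly right, since $\langle A \rangle_{\mathrm{r}} \otimes \langle B \rangle_{\mathrm{r}}$ denotes the thick right ideal generated by all tensor products of objects from the two ideals, and one must argue that checking membership in $\bP$ reduces to the generators $A \otimes Z \otimes \cdots$. This is routine in spirit but requires care; alternatively, one can circumvent it by invoking the object-level characterization of primeness from \cite[Theorem 3.2.2]{NVY1} quoted in the excerpt (``$A \otimes C \otimes B \in \bP$ for all $C$ implies $A \in \bP$ or $B \in \bP$''): since $\bP$ is a two-sided ideal and $A \otimes B \in \bP$, one directly gets $A \otimes C \otimes B \in \bP$ for all $C$ by the fact that — wait, this needs $C \otimes B$ or $A \otimes C$ considerations — so the cleanest route really is the ideal-theoretic one through the two-sidedness hypothesis, and I would present it that way.
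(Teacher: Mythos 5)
Your overall architecture is right, and in fact matches the paper's: establish that under the hypothesis every prime ideal is completely prime, then cite Theorem \ref{equiv}(b $\Rightarrow$ a). The problem is in the middle: the justification you offer for the key inclusion $\langle A \rangle_{\mathrm{r}} \otimes \langle B \rangle_{\mathrm{r}} \subseteq \bP$ is not correct, and you visibly notice this in your final paragraph (the ``wait, this needs $C \otimes B$ or $A \otimes C$ considerations'' aside) without resolving it.

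Concretely, you assert that $X \otimes Y$ for $X = A \otimes Z$ lands in $\bP$ ``because $A \otimes B \in \bP$ and $\bP$ is a two-sided ideal.'' But two-sidedness of $\bP$ only lets you tensor on the \emph{outside}: from $A \otimes B \in \bP$ you get $C \otimes (A \otimes B) \otimes D \in \bP$, never $A \otimes Z \otimes B \in \bP$ with $Z$ inserted in the middle. The same obstruction is exactly why your alternative object-level attempt stalls at the end. What closes the gap is the hypothesis on the category, and you have to use it for $\langle B\rangle_{\mathrm{r}}$, not only for $\bP$: first observe $\langle M \rangle_{\mathrm{r}} = \langle M \rangle$ for every $M$ (the hypothesis makes $\langle M \rangle_{\mathrm{r}}$ a two-sided ideal containing $M$, hence containing $\langle M \rangle$; the reverse containment is trivial). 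Then, since $\bP$ is a thick right ideal, the set $\{Y \in \bK : A \otimes Y \in \bP\}$ is a thick right ideal containing $B$, hence contains $\langle B \rangle_{\mathrm{r}} = \langle B \rangle \ni C \otimes B$ for every $C$. This gives $A \otimes C \otimes B \in \bP$ for all $C$, and now \cite[Theorem 3.2.2]{NVY1} yields $A \in \bP$ or $B \in \bP$. That is precisely the paper's proof; the ideal-theoretic reformulation you tried to set up is not wrong, but it adds bookkeeping and ultimately still needs this same step, so there is no advantage to it.
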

\begin{proof} First we claim that
\begin{equation}
\label{eq:ideals}
\langle M \rangle_{\mathrm{r}} = \langle M \rangle, \quad \forall M \in \bK.
\end{equation}
The inclusion $\langle M \rangle_{\mathrm{r}} \subseteq \langle M \rangle$ is obvious. The reverse inclusion is proved as follows. 
The hypothesis states that $\langle M \rangle_{\mathrm{r}}$ is a a two-sided thick ideal and, in particular, it contains
$\langle N \rangle$ for all $N \in \langle M \rangle_{\mathrm{r}}$. Applying this for $N=M$ yields 
$\langle M \rangle_{\mathrm{r}} \supseteq \langle M \rangle$.

Let $\bP \in \Spc \bK$ and $A, B \in \bK$ be such that $A \otimes B \in \bP$. Therefore $A \otimes \langle B \rangle_{\mathrm{r}} \subseteq \bP$ and, by 
\eqref{eq:ideals}, $A \otimes \langle B \rangle \subseteq \bP$. This implies that $A \otimes C \otimes B \in \bP$ for all $C \in \bK$ and, by the primeness 
of $\bP$, $A \in \bP$ or $B \in \bP$. Therefore, the thick ideal $\bP$ is completely prime. The second statement follows from the first and Theorem \ref{equiv}.
\end{proof}
If a monoidal triangulated category $\bK$ has the property that $A \otimes B \cong B \otimes A$ for all $A, B \in \bK$, then $\bK$ satisfies the assumption of Theorem \ref{compl-prime}.
This in particular holds for all braided monoidal triangulated categories. The next section contains much more nontrivial applications of this theorem.

\section{A criterion for non-complete primeness of $\Spc \bK$} 

\subsection{Rigidity and Semi-Primeness} Recall that an object $A$ of a monoidal category $\bK$ is {\em{left dualizable}} if there exists an object $A^*$ (called the {\em{left dual of $A$}}), 
together with evaluation and coevaluation maps
\[
\ev: A^* \otimes A \to 1 \quad \mbox{and} \quad
\coev: 1 \to A \otimes A^*,
\] 
such that the compositions
\begin{equation}
\label{eq:rigid}
A \xrightarrow{\coev \otimes \id} A \otimes A^* \otimes A \xrightarrow{ \id \otimes \ev} A \quad \mbox{and} \quad
A^* \xrightarrow{\id \otimes \coev} A^* \otimes A \otimes A^* \xrightarrow{\ev \otimes \id} A^*
\end{equation}
are the identity maps on $A$ and $A^*$, respectively. The left dual object $A^*$ is unique up to a unique isomorphism, 
\cite[Proposition 2.10.5]{EGNO1}. In a similar way one defines the notions of {\em{right dualizable}} objects and their {\em{right duals}}, 
see \cite[Definition 2.10.2]{EGNO1}. Finally, an object of a monoidal category is rigid if it is both left and right dualizable.
\begin{proposition}
\label{rigid-semiprime} If $\bK$ is a monoidal triangulated category in which every object is either left or right dualizable, then every thick ideal of $\bK$ is semiprime.
\end{proposition}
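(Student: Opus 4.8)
The plan is to use the characterization of semiprimeness in Theorem~\ref{semiprime}, specifically criterion (b): a proper thick ideal $\bQ$ is semiprime if and only if for all $A \in \bK$, the condition $A \otimes C \otimes A \in \bQ$ for all $C \in \bK$ forces $A \in \bQ$. (One also needs to check the case when $\bQ$ is not proper, i.e. $\bQ = \bK$, but then $\bQ$ is vacuously semiprime since $\bK$ is an intersection of the empty family of prime ideals, or one restricts attention to proper ideals as the theorem does.) So fix a thick ideal $\bQ$ and an object $A$ with $A \otimes C \otimes A \in \bQ$ for every $C \in \bK$; the goal is to deduce $A \in \bQ$.

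The key idea is to exploit dualizability to ``cancel'' one copy of $A$. Suppose first that $A$ is left dualizable with left dual $A^*$, evaluation $\ev \colon A^* \otimes A \to 1$ and coevaluation $\coev \colon 1 \to A \otimes A^*$. Taking $C = A^*$ in the hypothesis gives $A \otimes A^* \otimes A \in \bQ$. Now the first zigzag identity in \eqref{eq:rigid} exhibits $\id_A$ as the composite
\[
A \xrightarrow{\ \coev \otimes \id\ } A \otimes A^* \otimes A \xrightarrow{\ \id \otimes \ev\ } A,
\]
so $A$ is a retract (direct summand) of $A \otimes A^* \otimes A$. Since $\bQ$ is thick, it is closed under direct summands, and $A \otimes A^* \otimes A \in \bQ$, hence $A \in \bQ$. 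If instead $A$ is right dualizable, one argues symmetrically using the right dual and its evaluation/coevaluation maps: taking $C$ to be the right dual ${}^*A$, the hypothesis gives $A \otimes {}^*A \otimes A \in \bQ$, and the appropriate zigzag identity again realizes $A$ as a retract of this object, so $A \in \bQ$. Either way the hypothesis of Theorem~\ref{semiprime}(b) is verified, so $\bQ$ is semiprime.

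I do not anticipate a serious obstacle here; the argument is short once one sees that dualizability is exactly what lets the hypothesis $A \otimes C \otimes A \in \bQ$ be instantiated at $C = A^*$ (or $C = {}^*A$) and then collapsed via the zigzag identities. The one point requiring a little care is bookkeeping with left versus right duals: one must use the correct one of the two compositions in \eqref{eq:rigid} (and its right-dual analogue) so that the composite lands on $\id_A$ rather than $\id_{A^*}$, ensuring it is $A$ — not its dual — that is extracted as a direct summand. A second minor point is the treatment of the improper ideal $\bQ = \bK$, which should be handled or excluded explicitly as above. Everything else — closure of thick subcategories under retracts, and the translation via Theorem~\ref{semiprime} — is immediate.
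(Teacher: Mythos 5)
Your argument is correct and is essentially the same as the paper's: both instantiate the hypothesis at $C = A^*$ (or the right dual), invoke the zigzag identity to realize $A$ as a direct summand of $A \otimes A^* \otimes A$, use thickness to conclude $A \in \bQ$, and finish via Theorem~\ref{semiprime}. Your extra remark about the improper ideal $\bQ = \bK$ is a reasonable bit of care that the paper leaves implicit.
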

\begin{proof}
Fix a thick two-sided ideal $\bI$ of $\bK$. Let $A \in \bK$ be such that $A \otimes B \otimes A \in \bI$ for all $B \in \bK$. In particular, 
$A \otimes A^* \otimes A \in \bI$. Assume that $A$ is left dualizable; the case when it is right dualizable is handled in a similar fashion.
It follows from \eqref{eq:rigid} that $A$ is a direct summand of $A \otimes A^* \otimes A$. Since $\bI$ is a thick subcategory 
of $\bK$, $A \in \bI$. Theorem \ref{semiprime} now implies that $\bI$ is a semiprime ideal of $\bK$. 
\end{proof}

\subsection{Existence of Nilpotent Elements} Given a monoidal tensor category where every object is either left or right dualizable, one can now show that the existence of a 
nilpotent element insures that the universal support datum does not satisfy the tensor product property. 

\begin{theorem}
\label{non-compl-prime} Let $\bK$ be a monoidal triangulated category in which every object is either left or right dualizable. If $\bK$ has a non-zero nilpotent object $M$ (i.e., $M \not\cong 0$ but 
$M^{\otimes n}:=M \otimes\dots \otimes M \cong 0$, for some $n>0$) then not all prime ideals of $\bK$ are completely prime. As a consequence, the universal support datum $V : \bK \to \mc{X} (\Spc \bK)$ does not have the tensor product property. 
\end{theorem}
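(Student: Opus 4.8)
The plan is to exhibit a concrete prime ideal of $\bK$ that fails to be completely prime, and then invoke Theorem \ref{equiv}. The starting point is the nilpotent object $M$: since $M^{\otimes n} \cong 0$ while $M \not\cong 0$, the object $M$ witnesses a failure of complete primeness of the zero ideal — indeed, if all prime ideals were completely prime, then so would be their intersection, which by Definition \ref{semiprime-def} is a semiprime ideal, and in fact the nilpotents would have to lie in every prime. So the heart of the matter is to find a \emph{single} prime ideal $\bP$ and objects $A, B$ with $A \otimes B \in \bP$ but $A \notin \bP$ and $B \notin \bP$.

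The natural candidate is obtained by a Zorn's lemma / maximality argument. First I would observe that by Proposition \ref{rigid-semiprime}, every thick ideal of $\bK$ is semiprime, since every object is left or right dualizable. Now consider the set of thick two-sided ideals $\bI$ that do not contain $M$; this set is nonempty (the zero ideal works, as $M \not\cong 0$), and one checks it is closed under unions of chains, so Zorn's lemma produces an ideal $\bP$ maximal with respect to not containing $M$. The key step is to prove that such a $\bP$ is prime. Suppose $\bI \otimes \bJ \subseteq \bP$ with $\bI \not\subseteq \bP$ and $\bJ \not\subseteq \bP$. Then by maximality both $\bI + \bP$ and $\bJ + \bP$ (or rather the thick ideals they generate) contain $M$, and one wants to derive that some power of $M$ lies in $\bP$, contradicting the semiprimeness of $\bP$ together with $M \notin \bP$. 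Concretely, from $M \in \langle \bI, \bP \rangle$ and $M \in \langle \bJ, \bP\rangle$ one should deduce, using the exact-triangle axioms for thick ideals, that $M^{\otimes 2}$ (or some fixed power) lies in $\langle \bI \otimes \bJ, \bP \rangle \subseteq \bP$; iterating, $M^{\otimes 2^k} \in \bP$ for large $k$, and since $\bP$ is semiprime this forces $M \in \bP$, the desired contradiction. This is the standard argument that in a (semi)commutative-enough setting, ideals maximal with respect to excluding a fixed element that is not nilpotent modulo them are prime — here adapted to the noncommutative monoidal triangulated setting, where one must be careful that the thick ideal generated by $\langle \bI, \bP\rangle$ tensored with $\langle \bJ, \bP\rangle$ is controlled by $\bI \otimes \bJ$ plus $\bP$.

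Once $\bP$ is known to be prime with $M \notin \bP$, the conclusion is immediate: $M^{\otimes n} \cong 0 \in \bP$ but $M \notin \bP$, so $\bP$ is a prime ideal that is not completely prime. By Theorem \ref{equiv}, the universal support datum $V : \bK \to \mc{X}(\Spc \bK)$ does not have the tensor product property; explicitly, $V(M^{\otimes n}) = V(0) = \varnothing$ while $V(M) \cap \cdots \cap V(M) = V(M) \ni \bP$ is nonempty.

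The main obstacle I anticipate is the verification that the maximal excluding ideal $\bP$ is prime — specifically, the bookkeeping needed to show that $M \in \langle \bI, \bP\rangle$ and $M \in \langle \bJ, \bP\rangle$ together imply a fixed power of $M$ lies in $\bP$. In the monoidal triangulated setting the thick ideal generated by a set of objects is built up by an inductive procedure (closing under shifts, cones, summands, and one-sided tensoring), so one cannot simply ``expand the product''; instead one argues that $\langle \bI, \bP \rangle \otimes \langle \bJ, \bP\rangle \subseteq \langle \bI \otimes \bJ, \bP\rangle$ using that $\bP$ is a two-sided ideal and that tensoring is exact, hence preserves the inductive construction. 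This containment, combined with $M$ lying in both generated ideals, is what yields $M^{\otimes 2} \in \bP$ after using that tensoring a thick ideal with an object lands in a thick ideal. Granting \cite{NVY1}-style lemmas on thick ideals generated by sets of objects, this should go through cleanly; without them it is the one place real work is needed.
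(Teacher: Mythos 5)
Your proposal is essentially correct, but it takes a noticeably longer route than the paper, and in fact your own opening paragraph already contains nearly the entire argument — you just abandon it prematurely. The paper's proof is two lines: by Proposition \ref{rigid-semiprime}, $\langle 0\rangle$ is semiprime, i.e.\ by Definition \ref{semiprime-def} it is an intersection of prime ideals; hence the intersection of \emph{all} primes (the prime radical) is sandwiched between $\langle 0\rangle$ and that intersection, so it equals $\langle 0\rangle = \{0\}$. Since the nilpotent $M$ lies in every completely prime ideal (as $M^{\otimes n}\cong 0 \in \bP$), if all primes were completely prime then $M$ would lie in the prime radical $\langle 0\rangle$, i.e.\ $M\cong 0$, a contradiction. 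That is exactly the observation you begin with — ``if all prime ideals were completely prime, $\dots$ the nilpotents would have to lie in every prime'' — except that you then declare ``the heart of the matter is to find a \emph{single} prime ideal $\bP$'' and pivot to a Zorn's lemma construction. That construction (take $\bP$ maximal among thick two-sided ideals not containing $M$, verify $\langle\bI,\bP\rangle\otimes\langle\bJ,\bP\rangle\subseteq\bP$ using exactness of $\otimes$ and thickness, deduce primeness via semiprimeness of $\bP$) does go through, but it is essentially a re-derivation of the hard implication in the characterization of semiprime ideals (Theorem \ref{semiprime}, i.e.\ \cite[Theorem 3.4.2]{NVY1}) — the very result that makes the paper's shortcut available. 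You should also note a small imprecision in your first paragraph: the intersection of completely prime ideals is not itself completely prime in general, so the clause ``then so would be their intersection'' is not quite right; what you actually need, and what the paper uses, is just that nilpotents lie in every completely prime ideal, so they lie in the prime radical, which Proposition \ref{rigid-semiprime} identifies with $\langle 0\rangle$.
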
 
\begin{proof} By Proposition \ref{rigid-semiprime}, $\langle 0 \rangle$ is a semiprime ideal of $\bK$. Hence, the prime radical of $\bK$ equals $\langle 0 \rangle$. 

On the other hand $M$ lies in all completely prime ideals $\bP$ of $\bK$ because $M^{\otimes n} \cong 0 \in \bP$. If all prime ideals of $\bK$ are 
completely prime, this would imply that $M$ belongs to the prime radical of $\bK$ (i.e., $M \in \langle 0 \rangle$), which is a contradiction.
\end{proof}

The following corollary follows from Theorem \ref{non-compl-prime}, because all objects of $\stmod(H)$ are rigid for finite dimensional Hopf algebras $H$. 

\begin{corollary}
\label{Hopf-non-compl-prime}
Assume that $H$ is a finite dimensional Hopf algebra which admits a non-projective finite dimensional module $M$ such that $M^{\otimes n}$ is projective. 
Then not all prime ideals of the stable module category  $\stmod(H)$ are completely prime, i.e., 
the universal support datum $V : \bK \to \mc{X} (\Spc (\stmod(H)))$ does not have the tensor product property. 
\end{corollary}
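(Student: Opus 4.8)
The plan is to deduce Corollary \ref{Hopf-non-compl-prime} directly from Theorem \ref{non-compl-prime} by checking that the stable module category $\stmod(H)$ of a finite dimensional Hopf algebra satisfies the two hypotheses of that theorem. First I would recall that $\stmod(H)$ is a monoidal triangulated category: the tensor product over $\kk$ of two $H$-modules becomes an $H$-module via the coproduct of $H$, this descends to the stable category since projective modules form a tensor ideal (for a finite dimensional Hopf algebra, $P \otimes_\kk M$ is projective whenever $P$ is projective, because $H$ is self-injective and $P$ is then also injective, so $P \otimes_\kk M$ is injective hence projective), and the triangulated (suspension = cosyzygy) structure is compatible with $\otimes$. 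The unit object is the trivial module $\kk$.

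Next I would verify rigidity. For any finite dimensional $H$-module $M$, the $\kk$-linear dual $M^* = \Hom_\kk(M,\kk)$ carries an $H$-module structure via the antipode $S$ of $H$, and the standard evaluation and coevaluation maps $M^* \otimes M \to \kk$ and $\kk \to M \otimes M^*$ (as well as their counterparts for the other side, using $S$ and, if needed, $S^{-1}$, which exists since $H$ is finite dimensional) are $H$-module maps satisfying the zig-zag identities \eqref{eq:rigid}. Hence every object of $\modd(H)$ is rigid, and this passes to the stable category $\stmod(H)$; in particular every object of $\stmod(H)$ is both left and right dualizable, so the first hypothesis of Theorem \ref{non-compl-prime} is satisfied.

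Then I would translate the module-theoretic hypothesis of the corollary into the categorical language of Theorem \ref{non-compl-prime}. An object $M \in \stmod(H)$ is zero precisely when $M$ is projective in $\modd(H)$, and $M^{\otimes n}$ in $\stmod(H)$ is the image of the $n$-fold $\kk$-tensor power of $M$. Thus the assumption that $H$ admits a non-projective finite dimensional module $M$ with $M^{\otimes n}$ projective says exactly that $\stmod(H)$ contains a non-zero object $M$ with $M^{\otimes n} \cong 0$, i.e., a non-zero nilpotent object. Applying Theorem \ref{non-compl-prime} to $\bK = \stmod(H)$ then yields that not all prime ideals of $\stmod(H)$ are completely prime, and by Theorem \ref{equiv} the universal support datum $V$ on $\stmod(H)$ fails the tensor product property.

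There is no serious obstacle here: the corollary is essentially a dictionary entry. The only points requiring a word of care are (a) confirming that the monoidal and triangulated structures on $\stmod(H)$ are genuinely compatible in the sense of an M$\Delta$C — in particular that $\otimes$ is exact in each variable on the stable category — and (b) that the duality on $\modd(H)$ survives passage to $\stmod(H)$, which it does because projectives are stable under tensoring so the zig-zag identities, which hold on the nose in $\modd(H)$, hold a fortiori in the quotient. Both are standard facts about finite dimensional Hopf algebras, so the proof reduces to citing them and invoking Theorem \ref{non-compl-prime}.
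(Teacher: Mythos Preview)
Your proposal is correct and follows exactly the paper's approach: the paper's entire justification is that ``all objects of $\stmod(H)$ are rigid for finite dimensional Hopf algebras $H$,'' so Theorem \ref{non-compl-prime} applies directly. You have simply unpacked the standard facts (rigidity, the M$\Delta$C structure, and the translation between ``projective'' and ``zero in $\stmod$'') that the paper leaves implicit.
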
  

The following corollary of Theorem \ref{non-compl-prime} is of independent interest.

\begin{corollary}
\label{A-B}
If $\bK$ is a monoidal triangulated category in which every object is either left or right dualizable and $\bK$ has objects $A$ and $B$, 
such that $A \otimes B \cong 0$ but $B \otimes A \not\cong 0$, then not all prime ideals of $\bK$ are completely prime, i.e., 
the universal support datum $V : \bK \to \mc{X} (\Spc \bK)$ does not have the tensor product property. 
\end{corollary}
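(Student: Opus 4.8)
The plan is to derive Corollary~\ref{A-B} directly from Theorem~\ref{non-compl-prime} by producing a non-zero nilpotent object from the data $A \otimes B \cong 0$, $B \otimes A \not\cong 0$. The candidate nilpotent object is $M := B \otimes A$. First I would check that $M \not\cong 0$, which is exactly the hypothesis. Next I would compute $M^{\otimes 2} = (B \otimes A) \otimes (B \otimes A) \cong B \otimes (A \otimes B) \otimes A \cong B \otimes 0 \otimes A \cong 0$, using associativity of $\otimes$ and the fact that tensoring (on either side) with a zero object yields a zero object in any monoidal triangulated category. So $M$ is a non-zero nilpotent object with $n = 2$.

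Having produced such an $M$, I would simply invoke Theorem~\ref{non-compl-prime}: since $\bK$ is a monoidal triangulated category in which every object is left or right dualizable and $\bK$ has a non-zero nilpotent object, not all prime ideals of $\bK$ are completely prime, and hence by Theorem~\ref{equiv} the universal support datum $V$ does not have the tensor product property. That completes the argument.

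There is essentially no obstacle here: the only thing to be slightly careful about is the elementary claim that $Z \otimes 0 \cong 0 \cong 0 \otimes Z$ for any object $Z$, which follows because $\otimes$ is an exact (in particular additive) bifunctor, so $-\otimes Z$ and $Z \otimes -$ send the zero object to a zero object. One could also note that the asymmetry $A \otimes B \cong 0 \neq B \otimes A$ is precisely what forces the failure of complete primeness, reinforcing the theme that the tensor product property is sensitive to the noncommutativity of $\otimes$; this is worth a sentence of remark but is not needed for the proof.

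\begin{proof}
Set $M := B \otimes A$. By hypothesis $M \not\cong 0$. Since $\otimes$ is an exact, hence additive, bifunctor, tensoring on either side with a zero object produces a zero object; therefore
\[
M^{\otimes 2} = (B \otimes A) \otimes (B \otimes A) \cong B \otimes (A \otimes B) \otimes A \cong B \otimes 0 \otimes A \cong 0.
\]
Thus $M$ is a non-zero nilpotent object of $\bK$. Since every object of $\bK$ is left or right dualizable, Theorem \ref{non-compl-prime} applies and shows that not all prime ideals of $\bK$ are completely prime; by Theorem \ref{equiv}, the universal support datum $V : \bK \to \mc{X}(\Spc \bK)$ does not have the tensor product property.
\end{proof}
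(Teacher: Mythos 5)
Your proof is correct and follows exactly the same route as the paper: set $M := B \otimes A$, observe $M^{\otimes 2} \cong B \otimes (A \otimes B) \otimes A \cong 0$ while $M \not\cong 0$, and apply Theorem \ref{non-compl-prime}. The extra sentence justifying that tensoring with a zero object gives a zero object is a reasonable bit of care, though the paper leaves it implicit.
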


This follows from Theorem \ref{non-compl-prime}, because $M:=B \otimes A$ is not the zero object in $\bK$, but 
$M \otimes M \cong B \otimes (A \otimes B) \otimes A \cong 0$. 

\subsection{Remarks on the Work of  Benson-Witherspoon} \label{BW1} In \cite{BW1} 
Benson and Witherspoon considered the stable module categories of Hopf algebras of the form 
\[
H_{G,L} := (\kk[G] \# \kk L)^*,
\]
where $G$ and $L$ are finite groups with $L$ acting on $G$ by group automorphisms, $\kk$ is a field of positive characteristic dividing the order of $G$,  
$\kk L$ is the group algebra of $L$, $\kk[G]$ is the dual of the group algebra of $G$, and $\#$ denotes the corresponding smash product. 

Let $p$ be a prime number and $n$ be a positive integer. In \cite[Example 3.3]{BW1} Benson and Witherspoon proved that
for $G := ({\mathbb Z}/ p {\mathbb Z})^n$, $L := {\mathbb Z} /n {\mathbb Z}$ (with $L$ cyclically permuting the factors of $G$) and $\kk$ a field of characteristic 
$p$, $H_{G,L}$ admits a non-projective finite dimensional module $M$ such that $M \otimes M$ is projective. By Corollary~\ref{Hopf-non-compl-prime}, the universal 
support datum map for $\stmod(H_{G,L})$ does not satisfy the tensor product property. 

Benson and Witherspoon constructed \cite[Example 3.2]{BW1} a Hopf algebra of the form $H_{G,L}$ such that 
$H_{G,L}$ has a pair of finite dimensional representations $A, B$ with the property that $A \otimes B$ is not projective, 
but $B \otimes A$ is is projective. The group $G$
is chosen to be the Klein 4-group, $L$ is the cyclic group of order 3 whose generator 
cyclically permutes the non-identity elements of $G$, and the field $\kk$ has characteristic 2.
By Corollary \ref{A-B}, for this Hopf algebra $H_{G,L}$, the universal 
support datum map for $\stmod(H_{G,L})$ does not satisfy the tensor product property either.

\bre{mistake-BKS} We note that \cite[Lemma 7.10]{BKS} implies that every prime ideal of every monoidal triangulated category is completely prime.  
A counterexample to the statement is provided in the aforementioned example. The gap is in the converse direction in the proof of \cite[Lemma 7.10]{BKS} where it states that the 
converse direction is analogous. 

\ere

\section{The tensor product property for the cohomological support for small quantum Borels}

\subsection{Preliminaries} 
\label{sec:prelim-qg}
Let $\rootsys$ be an irreducible root system of rank $n$. Let $\ell$ be a positive integer and 
$\zeta$ be a primitive $\ell$th root of unity. 

We begin by introducing a general construction of the small quantum group 
for a Borel algebra that generalizes the well-known construction using group like elements arising from the root lattice. 
All of these will be finite dimensional Hopf algebras.
For a given $\rootsys$, let $X$ be the corresponding weight lattice and $\rootsys^{+}$ be a set of positive roots.
Denote by $\{\alpha_{1},\dots,\alpha_{n}\}$ the base of simple roots for $\rootsys$ corresponding to $\rootsys^+$
and by $\{d_1, \ldots, d_n\}$ the collection of relatively prime positive integers that symmetrizes 
the corresponding Cartan matrix. Denote by $\langle -, - \rangle$ the Weyl group invariant nondegenerate symmetric inner product on the Euclidean space 
${\mathfrak t}^*_{\mathbb R}$ spanned by $\rootsys$, normalized by $\langle \beta, \beta \rangle =2$ for short roots $\beta$. In terms of this form, 
the integers $d_i$ are given by $d_i = \langle \alpha_i , \alpha_i \rangle/ 2$.
Let  $\{\alpha_{1}\spcheck,\dots,\alpha_{n}\spcheck\}$ be the corresponding coroots thought of as elements of ${\mathfrak t}^*_{\mathbb R}$ by setting 
\[
\alpha_i\spcheck = \frac{2 \alpha_i}{ \langle \alpha_i, \alpha_i \rangle} = \frac{\alpha_i}{d_i} \cdot
\]

Choose a ${\mathbb Z}$-lattice, $\Gamma$, with ${\mathbb Z}\rootsys \subseteq \Gamma \subseteq X$. Such a lattice $\Gamma$ has rank $n$. Let 
$\{\mu_{1},\dots,\mu_{n}\}$ be a ${\mathbb Z}$-basis for $\Gamma$. 

Let $u_{\zeta}({\mathfrak b})$ be the small quantum group as described in \cite[Section 2.2]{BNPP}. Then 
$u_{\zeta}({\mathfrak b})=u_{\zeta}({\mathfrak u}) \# u_{\zeta}({\mathfrak t})$ where 
$u_{\zeta}({\mathfrak u})$ is generated by the root vectors $\{E_{\beta} \mid \beta\in \rootsys^{+}\}$ satisfying $E_\beta^\ell=0$ and
$u_{\zeta}({\mathfrak t})$ is a Hopf algebra isomorphic to the group algebra of ${\mathbb Z} \rootsys /( \ell {\mathbb Z} \rootsys)$ over ${\mathbb C}$,
realized as
\[
u_{\zeta}({\mathfrak t}) = {\mathbb C} [K_{\alpha_{1}}^{\pm 1},\dots,K_{\alpha_{n}}^{\pm 1}]/(K_{\alpha_i}^\ell-1, 1 \leq i \leq n)
\]
where $K_{\alpha_{i}}$ are group like elements.  The relations in $u_{\zeta}({\mathfrak b})$ defining the smash product are
\begin{equation} 
K_{\alpha_{i}} E_{\beta} K_{\alpha_{i}}^{-1}=\zeta^{\langle \beta,\alpha_{i} \rangle} E_{\beta}
\end{equation} 
for $\beta\in \rootsys^{+}$. 

We can consider the following generalization of the small quantum group for the Borel subalgebra. 
Given a lattice $\Gamma$ with ${\mathbb Z}\rootsys \subseteq \Gamma \subseteq X$ as above, define its sublattice
\[
\Gamma':= \{ \nu \in \Gamma \mid \langle \nu, \rootsys \rangle \subseteq \ell {\mathbb Z} \}. 
\]
Obviously, $\Gamma' \supseteq \ell \Gamma$, so $\Gamma/\Gamma'$ is a factor group of $\Gamma / \ell \Gamma \cong ( {\mathbb Z} / \ell  {\mathbb Z} )^n$. 
Denote the canonical projection
\begin{equation}
\label{bar}
\Gamma \twoheadrightarrow \Gamma/ \Gamma' \quad \mbox{by} \quad \mu \mapsto \overline{\mu}.
\end{equation}
Let
\begin{equation}
\label{group-alg}
u_{\zeta,\Gamma}({\mathfrak t}) \; \; \mbox{denote the group algebra of $\Gamma/\Gamma'$ over ${\mathbb C}$}. 
\end{equation}
For $\mu \in \Gamma/\Gamma'$ denote by $K_\mu$ the element of $u_{\zeta,\Gamma}({\mathfrak t})$ corresponding 
to $\mu$. Consider the Hopf algebra
\[
u_{\zeta,\Gamma}({\mathfrak b})= u_{\zeta}({\mathfrak u}) \# u_{\zeta,\Gamma}({\mathfrak t})
\]
with relations 
\begin{equation}
\label{u-act}
K_{\mu} E_{\alpha} K_{\mu}^{-1}=\zeta^{\langle \alpha,\mu_0 \rangle} E_{\alpha} \quad \mbox{for}
\quad \mu \in \Gamma/ \Gamma', \alpha \in \rootsys^+,
\end{equation}
where $\mu_0 \in \Gamma$ is a preimage of $\mu$. By the definition of the lattice $\Gamma'$, the right hand side does not  
depend on the choice of preimage.
The coproduct of the generators $E_{\alpha_i}$ is given by
\begin{equation}
\label{coprod}
\Delta(E_{\alpha_i}) = E_{\alpha_i} \otimes 1 + K_{\overline{\alpha_i}} \otimes E_{\alpha_i}
\end{equation}
for $1 \leq i \leq n$. The antipode is given by $S( E_{\alpha_i}) = - K_{\overline{\alpha_i}}^{-1} E_{\alpha_i}$.

In all of the above definitions, the lattice $\Gamma'$ can be replaced with any sublattice of $\Gamma'$.
The motivation for the use of the full lattice $\Gamma'$ is that this makes $u_{\zeta,\Gamma}({\mathfrak b})$ small 
in the sense that the only group-like central elements of $u_{\zeta,\Gamma}({\mathfrak b})$ are the scalars.
\bre{hopf-hom-two-lat}
Consider two lattices $\Gamma_1$ and $\Gamma_2$ such that ${\mathbb Z}\rootsys \subseteq \Gamma_1 \subseteq \Gamma_2 \subseteq X$.
Then $\Gamma'_1 = \Gamma_1 \cap \Gamma'_2$. Hence, we have a Hopf algebra embedding 
\[
u_{\zeta,\Gamma_1}({\mathfrak b}) \hookrightarrow u_{\zeta,\Gamma_2}({\mathfrak b}) 
\quad
\mbox{given by} \quad K_{\mu + \Gamma'_1} \mapsto K_{\mu + \Gamma'_2},\ E_\alpha \mapsto E_\alpha
\]
for $\mu \in \Gamma_1$, $\alpha \in \rootsys^+$. 
\ere
\subsection{Assumptions on ${\ell}$} For the remainder of this section we will employ one of the 
following assumptions in the statements of our results where $\zeta$ is an ${\ell}$th root of unity. 

\begin{assumption}\label{A:assumption1}  Let ${\ell}$ be a positive integer such that 
\begin{itemize} 
\item[(a)] ${\ell}$ is odd;
\item[(b)] If $\rootsys$ is of type $G_{2}$ then $3\nmid {\ell}$; 
\item[(c)] If $\rootsys$ is of type $A_{1}$ then ${\ell}\geq 3$, otherwise ${\ell}>3$. 
\end{itemize} 
\end{assumption} 
Conditions (a)-(b) in Assumption \ref{A:assumption1} are equivalent to saying that $\ell$ is an odd positive integer which 
is coprime to $\{d_1, \ldots, d_n\}$. 

\begin{assumption}\label{A:assumption2} Let ${\ell}$ be a positive integer such that 
\begin{itemize} 
\item[(a)] ${\ell}$ is odd;
\item[(b)] If $\rootsys$ is of type $G_{2}$ then $3\nmid {\ell}$; 
\item[(c)] ${\ell}>h$ where $h$ is the Coxeter number for $\rootsys$. 
\end{itemize} 
\end{assumption} 
\noindent 
Note that if ${\ell}$ satisfies Assumption~\ref{A:assumption2} then ${\ell}$ satisfies Assumption~\ref{A:assumption1}. 

The group of group-like elements of $u_{\zeta,\Gamma}({\mathfrak t})$ is isomorphic to $\Gamma/\Gamma'$. Next we explicitly describe this 
finite abelian group.

\begin{proposition}  \label{P:G'}
\begin{enumerate}
\item[(a)] If $\ell$ is coprime to $\{d_1, \ldots, d_n\}$, then 
\[
\Gamma' = \Gamma \cap \ell X.
\]
That is, $\Gamma/ \Gamma' \cong \Gamma/ ( \Gamma \cap \ell X)$.
\item[(b)] If $\ell$ is coprime to $\{d_1, \ldots, d_n\}$ and $|X/\Gamma|$, then 
\[
\Gamma' = \ell \Gamma.
\]
That is, $\Gamma/ \Gamma' \cong \Gamma/ (\ell \Gamma) \cong  ( {\mathbb Z} / \ell  {\mathbb Z} )^n$.
\end{enumerate}
\end{proposition}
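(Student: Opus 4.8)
The plan is to compute $\Gamma'$ directly from its definition $\Gamma' = \{\nu\in\Gamma : \langle\nu,\rootsys\rangle\subseteq\ell{\mathbb Z}\}$, translating between the symmetric form and the coroot pairing via $\langle\lambda,\alpha\spcheck\rangle = \langle\lambda,\alpha\rangle/d_\alpha$, where for a root $\alpha$ I write $d_\alpha := \langle\alpha,\alpha\rangle/2$. The first point to record is that, since the form is Weyl-invariant and every root is Weyl-conjugate to a simple root, one has $d_\alpha\in\{d_1,\dots,d_n\}$ for all $\alpha\in\rootsys$; hence a $\ell$ coprime to $\{d_1,\dots,d_n\}$ is coprime to every $d_\alpha$, not merely the $d_i$. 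I will also use the characterization $X = \{\lambda\in{\mathfrak t}^*_{\mathbb R} : \langle\lambda,\alpha\spcheck\rangle\in{\mathbb Z}\ \text{for all}\ \alpha\in\rootsys\}$ of the weight lattice.

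For part (a), I would first check the easy inclusion $\Gamma\cap\ell X\subseteq\Gamma'$, which needs no hypothesis on $\ell$: writing $\nu=\ell\lambda$ with $\lambda\in X$ gives $\langle\nu,\alpha\rangle = \ell d_\alpha\langle\lambda,\alpha\spcheck\rangle\in\ell{\mathbb Z}$ for every $\alpha\in\rootsys$. For the reverse inclusion, take $\nu\in\Gamma'$ and write $\langle\nu,\alpha\rangle=\ell m_\alpha$ with $m_\alpha\in{\mathbb Z}$; from $\nu\in\Gamma\subseteq X$ one gets $\langle\nu,\alpha\spcheck\rangle=\ell m_\alpha/d_\alpha\in{\mathbb Z}$, so $d_\alpha\mid\ell m_\alpha$, and coprimality of $\ell$ and $d_\alpha$ upgrades this to $d_\alpha\mid m_\alpha$. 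Hence $\langle\ell^{-1}\nu,\alpha\spcheck\rangle=m_\alpha/d_\alpha\in{\mathbb Z}$ for all $\alpha\in\rootsys$, so $\ell^{-1}\nu\in X$, i.e. $\nu\in\Gamma\cap\ell X$. This proves $\Gamma'=\Gamma\cap\ell X$.

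For part (b), part (a) reduces matters to showing $\Gamma\cap\ell X=\ell\Gamma$. The inclusion $\ell\Gamma\subseteq\Gamma\cap\ell X$ is immediate since $\Gamma\subseteq X$. Conversely, if $\nu=\ell\lambda\in\Gamma$ with $\lambda\in X$, then the class of $\lambda$ in the finite group $X/\Gamma$ is annihilated by $\ell$; since $\ell$ is coprime to $|X/\Gamma|$, multiplication by $\ell$ is an automorphism of $X/\Gamma$, so that class is trivial, i.e. $\lambda\in\Gamma$ and $\nu\in\ell\Gamma$. Finally $\Gamma$ is free of rank $n$, so $\Gamma/\ell\Gamma\cong({\mathbb Z}/\ell{\mathbb Z})^n$.

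I do not expect a serious obstacle: the whole argument is elementary linear algebra over ${\mathbb Z}$. The one place to be careful is the reduction of the coprimality hypothesis from the $d_i$ to all $d_\alpha$, and correctly tracking the normalization constant $d_\alpha$ relating $\langle-,-\rangle$ to the coroot pairing $\langle-,\alpha\spcheck\rangle$; a stray factor there would break the divisibility bookkeeping in part (a).
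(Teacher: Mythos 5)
Your proof is correct and follows essentially the same route as the paper: for part (a), the paper works in fundamental-weight coordinates $\nu=\sum m_i\omega_i$ and pairs against simple roots only (using that $\langle\nu,\rootsys\rangle\subseteq\ell\mathbb{Z}$ reduces to the simple roots since roots are $\mathbb{Z}$-combinations of them), obtaining $m_id_i\in\ell\mathbb{Z}\Leftrightarrow m_i\in\ell\mathbb{Z}$, whereas you argue coordinate-free over all roots via the Weyl-invariance observation $d_\alpha\in\{d_1,\dots,d_n\}$ — the divisibility bookkeeping is the same. Part (b) is the identical order/coprimality argument in $X/\Gamma$.
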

\begin{proof} (a) Let $\nu = \sum m_i \omega_i \in \Gamma \subseteq X$ for some $m_i \in {\mathbb Z}$. Then $\nu \in \Gamma' \Leftrightarrow$
\begin{align*}
&\langle \nu, \alpha_i \rangle \in \ell  {\mathbb Z}, \quad\forall 1 \leq i \leq n  \Leftrightarrow
\\
&m_i d_i \in \ell  {\mathbb Z}, \quad \; \;  \forall   1 \leq i \leq n  \Leftrightarrow
\\
&m_i \in \ell  {\mathbb Z}, \quad \; \; \; \; \;  \forall 1 \leq i \leq n  \Leftrightarrow
\\
&\nu \in \Gamma \cap \ell X.
\end{align*}

(b) In view of part (a), we have to prove that under the assumptions in part (b), $\Gamma \cap \ell X = \ell \Gamma$. Clearly, 
\[
\Gamma \cap \ell X \supseteq \ell \Gamma.
\]
For the opposite inclusion, take $\nu \in \Gamma \cap \ell X$. Then the order of $\nu/\ell + \Gamma$ in $X/\Gamma$ divides $\ell$. 
Since $\ell$ is coprime to the order of the group $X/\Gamma$, the order of  $\nu/\ell + \Gamma$ equals 1. Therefore, $\nu/\ell \in \Gamma$, and thus, 
$\nu \in \ell \Gamma$. Hence, $\Gamma \cap \ell X = \ell \Gamma$. 
\end{proof}
\bex{standard-recover} 
\label{ex-X'} The standard notion of a small quantum Borel subalgebra $u_{\zeta}({\mathfrak b})$ is recovered from the above one as follows.
Proposition \ref{P:G'}(b), applied for the root lattice $\Gamma = {\mathbb Z} \rootsys$, implies that, 
if $\ell$ is coprime to $\{d_1, \ldots, d_n\}$ and $|X/{\mathbb Z} \rootsys|$, then 
\[
u_{\zeta, {\mathbb Z}  \rootsys}({\mathfrak b}) \cong u_{\zeta}({\mathfrak b}).
\]
Note that both aforementioned algebras are defined for general values of ${\ell}$, but become isomorphic under the coprimeness conditions. 
\eex
\subsection{Automorphisms, Representations and Cohomology} \label{S:autorepi} In this section we will generalize many of the properties presented in \cite[Section 8.3]{NVY1} for 
$u_{\zeta}({\mathfrak b})$ to $u_{\zeta,\Gamma}({\mathfrak b})$. For the reader's convenience, we will use the same notational conventions. 

Denote the character group of $\Gamma/\Gamma'$ by 
\[
\widehat{\Gamma/\Gamma'}.
\]
By abuse of notation, for $\lambda \in \widehat{\Gamma/\Gamma'}$ we denote by the same symbol the one dimensional representation of $u_{\zeta,\Gamma}({\mathfrak b})$
given by
\[
K_\mu \mapsto \lambda(\mu), \quad
E_\alpha \mapsto 0, \quad \forall \mu \in \Gamma/\Gamma', \alpha \in \rootsys^+.
\]

For each $\lambda \in \widehat{\Gamma/\Gamma'}$, one can define an automorphism, $\gamma_\lambda$ of $u_{\zeta,\Gamma}({\mathfrak b})$ as follows: 
\[
\gamma_\lambda (E_\alpha)=  \lambda(\overline{\alpha}) E_\alpha, \quad \gamma_\lambda(K_\mu)=K_\mu, 
\quad \forall \mu \in \Gamma/\Gamma', \alpha \in \rootsys^+.
\]
Denote the subgroup $\Pi= \{ \gamma_{\lambda} :\ \lambda \in  \widehat{\Gamma/\Gamma'} \} \subseteq \Aut(u_{\zeta,\Gamma}({\mathfrak b}))$. 
For any $u_{\zeta,\Gamma}({\mathfrak b})$-module, $Q$, the automorphism $\gamma_{\lambda}$ can be used to define 
a new module structure on it called the twist: $Q^{\gamma_\lambda}$. The underlying vector space of $Q^{\gamma_\lambda}$ is still $Q$ with the action given by 
$x.m=\gamma_\lambda(x) m$ for all $x \in u_{\zeta,\Gamma}({\mathfrak b})$ and $m \in Q^{\gamma_\lambda}$.  

Let $R=\text{H}^{\bullet}(u_{\zeta,\Gamma}({\mathfrak b}), \mathbb{C})$ be the cohomology ring of $u_{\zeta,\Gamma}({\mathfrak b})$. An automorphism in $\Pi$ acts on the cohomology ring by taking an $n$-fold extension of ${\mathbb C}$ with 
${\mathbb C}$ and twisting each module in the $n$-fold extension to produce a new $n$-fold extension. This provides an action of the group $\Pi$ on the ring $R$. 
The following proposition summarizes properties of the automorphisms in $\Pi$ and how they interact with representations and the cohomology. 

\begin{proposition}  \label{P:properties}
Let $u_{\zeta,\Gamma}({\mathfrak b})$ be the small quantum group for the Borel subalgebra and 
$R=\operatorname{H}^{\bullet}(u_{\zeta,\Gamma}({\mathfrak b}),{\mathbb C})$ be the cohomology ring. 
\begin{itemize}
\item[(a)] The irreducible representations for $u_{\zeta,\Gamma}({\mathfrak b})$ are one-dimensional and are precisely the 
representations $\lambda$ for $\lambda \in \widehat{\Gamma/\Gamma'}$.
\item[(b)] For any $u_{\zeta,\Gamma}({\mathfrak b})$-module, $Q$, and $\lambda \in \widehat{\Gamma/\Gamma'}$ one has 
\begin{equation*} \label{e:twistiso} 
\lambda \otimes Q \otimes \lambda^{-1} \cong Q^{\gamma_{\lambda}}.
\end{equation*} 
\item[(c)] The action of $\Pi$ on $R$ is trivial. 
\item[(d)] The action of $\Pi$ on $\operatorname{Proj}(R)$ is trivial. 
\end{itemize} 
\end{proposition}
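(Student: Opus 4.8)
The plan is to prove the four statements in order, since each essentially feeds the next. For part (a), I would first observe that $u_{\zeta}({\mathfrak u})$ is a finite-dimensional connected (local) algebra: it is generated by nilpotent elements $E_\beta$ with $E_\beta^\ell = 0$, so its augmentation ideal is nilpotent and it has a unique simple module, the trivial one. Since $u_{\zeta,\Gamma}({\mathfrak b}) = u_{\zeta}({\mathfrak u}) \# u_{\zeta,\Gamma}({\mathfrak t})$ is a smash product with the group algebra of the finite abelian group $\Gamma/\Gamma'$, every simple $u_{\zeta,\Gamma}({\mathfrak b})$-module restricts to a semisimple (hence, by connectedness, trivial-isotypic) $u_{\zeta}({\mathfrak u})$-module, so the $E_\alpha$ act by zero; what remains is a simple module over the group algebra of the finite abelian group $\Gamma/\Gamma'$ over ${\mathbb C}$, which is one-dimensional and given by a character $\lambda \in \widehat{\Gamma/\Gamma'}$. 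Conversely each such $\lambda$ visibly defines a one-dimensional representation as written. This gives the bijection in (a).

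For part (b), I would compute the $u_{\zeta,\Gamma}({\mathfrak b})$-action on $\lambda \otimes Q \otimes \lambda^{-1}$ directly using the coproduct. Since $\lambda$ and $\lambda^{-1}$ are one-dimensional, this tensor product is just $Q$ as a vector space, and the action of a generator is computed by applying $\Delta^{(2)}$. For $K_\mu$, which is group-like, the outer factors contribute $\lambda(\mu)\lambda^{-1}(\mu) = 1$, so $K_\mu$ acts as on $Q$, matching $Q^{\gamma_\lambda}$ since $\gamma_\lambda(K_\mu) = K_\mu$. For $E_{\alpha_i}$, using \eqref{coprod} and the antipode formula $S(E_{\alpha_i}) = -K_{\overline{\alpha_i}}^{-1} E_{\alpha_i}$, the only surviving term when sandwiched between $\lambda$ and $\lambda^{-1}$ picks up the scalar $\lambda(\overline{\alpha_i})$ from the factor $K_{\overline{\alpha_i}}$ appearing in $\Delta(E_{\alpha_i})$; this is exactly $\gamma_\lambda(E_{\alpha_i}) = \lambda(\overline{\alpha_i}) E_{\alpha_i}$ acting on $Q$. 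Extending multiplicatively over all of $u_{\zeta,\Gamma}({\mathfrak b})$ (both sides are algebra actions) gives the isomorphism $\lambda \otimes Q \otimes \lambda^{-1} \cong Q^{\gamma_\lambda}$. I expect this bookkeeping with the coproduct and antipode to be the main technical obstacle, though it is routine.

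For part (c), I would argue that the action of $\gamma_\lambda$ on $R = \operatorname{H}^{\bullet}(u_{\zeta,\Gamma}({\mathfrak b}),{\mathbb C})$ induced by twisting an $n$-fold extension of ${\mathbb C}$ by ${\mathbb C}$ is trivial because the trivial module is fixed by the twist: ${\mathbb C}^{\gamma_\lambda} \cong {\mathbb C}$ since $\gamma_\lambda$ fixes every $K_\mu$ and acts on the augmentation ideal, so it descends to the identity on ${\mathbb C} = u_{\zeta,\Gamma}({\mathfrak b})/\ker(\text{aug})$. More precisely, the automorphism $\gamma_\lambda$ is \emph{inner up to the relevant action}: by part (b) applied with $Q = {\mathbb C}$ we get ${\mathbb C}^{\gamma_\lambda} \cong \lambda \otimes {\mathbb C} \otimes \lambda^{-1} \cong {\mathbb C}$, and more generally twisting an entire $n$-fold extension $0 \to {\mathbb C} \to M_{n-1} \to \cdots \to M_0 \to {\mathbb C} \to 0$ by $\gamma_\lambda$ is isomorphic to conjugating it by $\lambda$, i.e. applying the exact functor $\lambda \otimes (-) \otimes \lambda^{-1}$. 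Since $\lambda$ is invertible (a one-dimensional module with inverse $\lambda^{-1}$), the functor $\lambda \otimes (-) \otimes \lambda^{-1}$ is an equivalence of the module category restricting to the identity on $\operatorname{Ext}^{\bullet}_{u_{\zeta,\Gamma}({\mathfrak b})}({\mathbb C},{\mathbb C})$ up to the natural isomorphisms ${\mathbb C} \otimes \lambda^{\pm 1} \cong \lambda^{\pm 1} \otimes {\mathbb C}$; tracking through, the induced map on $R$ is the identity. Hence $\Pi$ acts trivially on $R$.

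Finally, part (d) is an immediate consequence of part (c): the action of $\Pi$ on $\operatorname{Proj}(R)$ is induced by its action on the graded ring $R$ by graded algebra automorphisms, and since that action is trivial by (c), the induced action on $\operatorname{Proj}(R)$ is trivial as well. This completes the proof.
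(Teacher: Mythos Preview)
Your arguments for parts (a), (b), and (d) match the paper's essentially verbatim. The interesting divergence is in part (c).

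The paper proves (c) by an explicit weight computation: using the Lyndon--Hochschild--Serre spectral sequence it identifies $R \cong \operatorname{H}^{\bullet}(u_{\zeta}({\mathfrak u}),{\mathbb C})^{u_{\zeta,\Gamma}({\mathfrak t})}$, and then observes that any ${\mathbb Z}\rootsys$-weight $\nu$ surviving the $u_{\zeta,\Gamma}({\mathfrak t})$-invariants satisfies $\langle \nu, \Gamma\rangle \subseteq \ell{\mathbb Z}$, hence $\overline{\nu}=0$ in $\Gamma/\Gamma'$; since $\gamma_\lambda$ scales a weight-$\nu$ element by $\lambda(\overline{\nu})=1$, the action is trivial. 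Your route is purely categorical: by (b), twisting by $\gamma_\lambda$ is conjugation by the invertible object $\lambda$, and conjugation by an invertible object acts trivially on $\operatorname{End}^{\bullet}(\mathbf{1})$ in any monoidal triangulated category. This is correct---the underlying fact is the standard coherence result that for $f \in \operatorname{Hom}(\mathbf{1},\Sigma^n\mathbf{1})$ and any $X$, the maps $\id_X \otimes f$ and $f \otimes \id_X$ agree after applying unit isomorphisms, so $\id_\lambda \otimes f \otimes \id_{\lambda^{-1}} = f \otimes \id_{\lambda \otimes \lambda^{-1}}$, which transports to $f$ under $\lambda \otimes \lambda^{-1} \cong \mathbf{1}$. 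Your phrase ``tracking through, the induced map on $R$ is the identity'' is doing all of this work and would benefit from being spelled out; as written it reads as an assertion rather than an argument.

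The trade-off: your approach is more conceptual and applies to any Hopf algebra automorphism realized as conjugation by a one-dimensional module, with no input about the cohomology ring itself. The paper's approach is specific to this setting but yields extra structural information (the weight decomposition of $R$), and the LHS identification it establishes is reused later in the paper for the finite-generation and cohomology computations.
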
 

\begin{proof} (a) The relations $E_{\alpha}^{\ell}=0$ for $\alpha \in \rootsys^+$ imply that all root vectors $E_\beta$ are in the radical of the finite dimensional 
algebra $u_{\zeta,\Gamma}({\mathfrak b})$ and so they act by $0$ on every irreducible representation of $u_{\zeta,\Gamma}({\mathfrak b})$. Hence, 
every irreducible representations of $u_{\zeta,\Gamma}({\mathfrak b})$ is an irreducible representation of $u_{\zeta,\Gamma}({\mathfrak t})$, 
which is the group algebra of $\Gamma/\Gamma'$, so the  irreducible representation of $u_{\zeta,\Gamma}({\mathfrak t})$  are precisely 
the representations $\lambda$ for $\lambda \in \widehat{\Gamma/\Gamma'}$.

(b) The isomorphism follows from the coproduct formula \eqref{coprod} and the fact that 
the set $\{K_\mu, E_{\alpha_i} \mid \mu \in \Gamma, i=1, \ldots, n\}$ generates the algebra $u_{\zeta,\Gamma}({\mathfrak b})$.

(c and d)  Note that (d) follows immediately from (c). So to finish the proof we show that the action of $\Pi$ on the cohomology ring $R$ is trivial. 

By using the Lyndon-Hochschild-Serre (LHS) spectral sequence and the fact that the representations for $u_{\zeta,\Gamma}({\mathfrak t})$ are completely reducible
(because $u_{\zeta,\Gamma}({\mathfrak t})$ is isomorphic to the group algebra over ${\mathbb C}$ of a finite group), it follows that 
$R=\operatorname{H}^{\bullet}(u_{\zeta}({\mathfrak u}),{\mathbb C})^{u_{\zeta,\Gamma}({\mathfrak t})}$ with respect to the action \eqref{u-act}
(cf. \cite[Theorem 2.5]{GK}). Consequently, for every weight $\nu \in {\mathbb Z} \rootsys$ of $R$ 
\[
\langle \nu, \Gamma \rangle \subseteq \ell {\mathbb Z} \Rightarrow 
\langle \nu, \rootsys \rangle \subseteq \ell {\mathbb Z} \Rightarrow \nu \in {\mathbb Z} \rootsys \cap \Gamma' \Rightarrow \overline{\nu}=0.
\]
Let $f \in R$ be of weight $\nu$. 
The automorphism $\gamma_{\lambda} \in \Pi$ acts on $f$ by 
\[
\gamma_{\lambda}(f)= \lambda(\overline{\nu}) f = f,
\]
which proves the triviality of the $\Pi$-action on $R$.
\end{proof} 
\subsection{Finite Generation} In order to verify the finite generation conditions on the cohomology, we state the following 
result from \cite[Proposition 5.6.3]{BNPP} on the cohomology for $u_{\zeta}({\mathfrak u})$. 

\begin{theorem}\label{T:ucohofg} Let ${\ell}$ satisfy Assumption~\ref{A:assumption1}, and $\zeta$ be an ${\ell}$th root of unity. There exists a polynomial ring 
$S^{\bullet}({\mathfrak u}^{*})$ such that the following holds: 
\begin{itemize} 
\item[(a)] $\operatorname{H}^{\bullet}(u_{\zeta}({\mathfrak u}),{\mathbb C})$ is finitely generated over $S^{\bullet}({\mathfrak u}^{*})$;
\item[(b)] $\operatorname{H}^{\bullet}(u_{\zeta}({\mathfrak u}),{\mathbb C})$ is a finitely generated ${\mathbb C}$-algebra.
\end{itemize} 
\end{theorem}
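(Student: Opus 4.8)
Since this statement is quoted from \cite[Proposition 5.6.3]{BNPP}, the plan is to indicate how the argument there is organized rather than to reprove it from scratch; the strategy is the standard one for finite generation of the cohomology of a ``quantum complete intersection.'' \textbf{Step 1.} First I would fix a convex ordering of $\rootsys^+$ so that $u_\zeta({\mathfrak u})$ has a PBW basis in the ordered monomials $\prod_{\beta \in \rootsys^+} E_\beta^{a_\beta}$ with $0 \le a_\beta < \ell$, and I would equip $u_\zeta({\mathfrak u})$ with a filtration (a $\mathbb{Z}^{\rootsys^+}$-filtration refined to a total order) in which the Levendorskii--Soibelman straightening relations $E_\gamma E_\beta = q_{\gamma\beta} E_\beta E_\gamma + (\text{lower order})$ have genuinely lower-order tails. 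Writing $N := |\rootsys^+| = \dim {\mathfrak u}$, the associated graded algebra is the quantum complete intersection
\[
A := \operatorname{gr} u_\zeta({\mathfrak u}) \;\cong\; \mathbb{C}\langle E_\beta : \beta \in \rootsys^+ \rangle \big/ \big( E_\beta^\ell, \; E_\gamma E_\beta - q_{\gamma\beta} E_\beta E_\gamma \big),
\]
a skew truncated polynomial algebra on $N$ generators.

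\textbf{Step 2.} Next I would compute $\operatorname{H}^\bullet(A,\mathbb{C})$. Because $\ell$ is odd, hence $\ell \geq 3$ (Assumption~\ref{A:assumption1}(a),(c)), the cohomology of each rank-one truncated polynomial piece $\mathbb{C}[E]/(E^\ell)$ is $\mathbb{C}[x] \otimes \Lambda(y)$ with $\deg x = 2$ and $\deg y = 1$; assembling these through the skew structure via a Künneth-type spectral sequence shows that $\operatorname{H}^\bullet(A,\mathbb{C})$ contains a polynomial subalgebra $S^\bullet({\mathfrak u}^*) = \mathbb{C}[x_\beta : \beta \in \rootsys^+]$ in $N$ degree-two generators and is free of finite rank ($2^N$) over it. This is the polynomial ring named in the statement.

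\textbf{Step 3.} Then I would invoke the multiplicative spectral sequence of the filtered algebra, $E_1 = \operatorname{H}^\bullet(A,\mathbb{C}) \Rightarrow \operatorname{H}^\bullet(u_\zeta({\mathfrak u}),\mathbb{C})$, and argue that each degree-two generator $x_\beta$ is a permanent cycle, i.e.\ lifts to an honest class in $\operatorname{H}^2(u_\zeta({\mathfrak u}),\mathbb{C})$. This is the main obstacle. I would do it by producing explicit $2$-cocycles, either by restricting to the rank-one subalgebras $\mathbb{C}[E_\beta]/(E_\beta^\ell) \hookrightarrow u_\zeta({\mathfrak u})$ and combining a bar-resolution computation with a compatibility check, or by realizing the $x_\beta$ as pullbacks of polynomial generators along a quantum Frobenius map from the De Concini--Kac integral form. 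The precise hypotheses on $\ell$ (odd, coprime to $\{d_1,\dots,d_n\}$, and $\ell \ge 3$ in type $A_1$, $\ell > 3$ otherwise) are exactly what is needed to guarantee that these lifts exist and that the PBW basis does not collapse.

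\textbf{Step 4.} Finally, granting that the $x_\beta$ are permanent cycles, $E_1$ is a Noetherian module over the commutative polynomial subalgebra they generate, so a degree count forces only finitely many nonzero differentials in each total degree; hence $E_\infty$, and therefore $\operatorname{H}^\bullet(u_\zeta({\mathfrak u}),\mathbb{C})$ itself, is finitely generated as a module over the image of $S^\bullet({\mathfrak u}^*)$, which gives (a), and is in particular a finitely generated $\mathbb{C}$-algebra, which gives (b).
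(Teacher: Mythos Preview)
Your proposal is correct and follows the same approach that the paper invokes: the paper does not give an independent proof of this theorem but cites \cite[Proposition 5.6.3]{BNPP}, and the surrounding discussion records exactly the structure you describe---the filtration of \cite[\S2.9]{BNPP} with $\operatorname{gr} u_\zeta(\mathfrak{u})$ a quantum complete intersection, the May-type spectral sequence with $E_1 \cong \bigoplus_{2a+b=n} S^a(\mathfrak{u}^*)^{[1]} \otimes \Lambda_\zeta^b$, and the key point that $d_r(S^\bullet(\mathfrak{u}^*)^{[1]}) = 0$ for $r \ge 1$. Your Steps~1--4 line up with this precisely; the only terminological mismatch is that the paper writes the exterior-like factor as the deformed object $\Lambda_\zeta^\bullet$ rather than a genuine exterior algebra, but this does not affect the finite-generation argument.
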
 

Theorem~\ref{T:ucohofg} allows us to consider the issue of finite generation of cohomology for $u_{\zeta,\Gamma}({\mathfrak b})$. 
The filtration in \cite[Section 2.9]{BNPP} on $u_{\zeta}({\mathfrak u})$ that induces the grading  as in \cite[Lemma 5.6.1]{BNPP} is stable under the action of $K_{\mu_{i}}$, $i=1,2\,\dots,n$. 
Consequently, there exists a spectral sequence 
\begin{equation}\label{eq:spectral1}
E_{1}^{i,j}=\operatorname{H}^{i+j}(\text{gr }u_{\zeta}({\mathfrak u}),{\mathbb C})_{(i)}\Rightarrow \operatorname{H}^{i+j}(u_{\zeta}({\mathfrak u}),{\mathbb C}) 
\end{equation} 
such that 
$$\operatorname{H}^{n}(\text{gr }u_{\zeta}({\mathfrak u}),{\mathbb C})\cong \bigoplus_{2a+b=n} S^{a}({\mathfrak u}^{*})^{[1]}\otimes \Lambda^{b}_{\zeta}.$$ 
Here $S^{\bullet}({\mathfrak u}^{*})^{[1]}$ is the symmetric algebra on ${\mathfrak u}^{*}$ (the dual of ${\mathfrak u}$ and the $[1]$ indicates that $u_{\zeta,\Gamma}({\mathfrak t})$ acts trivially) and 
$\Lambda^{b}_{\zeta}$ is a deformation of the 
exterior algebra on ${\mathfrak u}^{*}$ with generators and relations defined in \cite[Section 2.9]{BNPP}. In the proof of Theorem~\ref{T:ucohofg} (given in \cite[Proposition 5.6.3]{BNPP}), 
it is shown that under the assumptions on ${\ell}$, $d_{r}(S^{\bullet}({\mathfrak u}^{*})^{[1]})=0$ for $r\geq 1$ where $d_{r}$ is the differential on the $E_{r}$-page of the spectral 
sequence (\ref{eq:spectral1}). One can then conclude part (a) of Theorem~\ref{T:ucohofg}. 

Since $u_{\zeta}({\mathfrak u})$ is normal in $u_{\zeta,\Gamma}({\mathfrak b})$ (cf. \cite[Section 2.8]{BNPP}) with quotient 
$u_{\zeta,\Gamma}({\mathfrak t})$, and the filtration is stable under $u_{\zeta,\Gamma}({\mathfrak t})$, it follows that 
$u_{\zeta,\Gamma}({\mathfrak t})$ acts on the spectral sequence (\ref{eq:spectral1}). Furthermore, one can verify that $u_{\zeta,\Gamma}({\mathfrak t})$ acts trivially on $S^{\bullet}({\mathfrak u}^{*})^{[1]}$. 

Since finite-dimensional representations for $u_{\zeta,\Gamma}({\mathfrak t})$ are completely reducible, 
the fixed point functor $(-)^{u_{\zeta,\Gamma}({\mathfrak t})}$ is exact. By using the LHS spectral sequence and the exactness, one shows that 
$$\operatorname{H}^{\bullet}(u_{\zeta,\Gamma}({\mathfrak b}),{\mathbb C})\cong \operatorname{H}^{\bullet}(u_{\zeta}({\mathfrak u}),{\mathbb C})^{u_{\zeta,\Gamma}({\mathfrak t})}.$$ 
Moreover, the fixed point functor can be applied to get a spectral sequence: 
\begin{equation}\label{eq:spectral2}
E_{1}^{i,j}=[\operatorname{H}^{i+j}(\text{gr }u_{\zeta}({\mathfrak u}),{\mathbb C})_{(i)}]^{u_{\zeta,\Gamma}({\mathfrak t})}\Rightarrow \operatorname{H}^{i+j}(u_{\zeta}({\mathfrak b}),{\mathbb C}). 
\end{equation} 
We can now verify the requisite finite generation assumptions on the cohomology for $u_{\zeta,\Gamma}({\mathfrak b})$. 

\begin{theorem}\label{T:bcohofg}  Let ${\ell}$ satisfy Assumption~\ref{A:assumption1}, $\zeta$ be an ${\ell}$th root of unity, and 
$u_{\zeta,\Gamma}({\mathfrak b})$ be a small quantum group for a Borel subalgebra. Then 
\begin{itemize} 
\item[(a)] $\operatorname{H}^{\bullet}(u_{\zeta,\Gamma}({\mathfrak b}),{\mathbb C})$ is a finitely generated ${\mathbb C}$-algebra;
\item[(b)] For any finite-dimensional $u_{\zeta,\Gamma}({\mathfrak b})$-module, $M$, $\operatorname{H}^{\bullet}(u_{\zeta,\Gamma}({\mathfrak b}),M)$ is 
finitely generated over $\operatorname{H}^{\bullet}(u_{\zeta,\Gamma}({\mathfrak b}),{\mathbb C})$. 
\end{itemize} 
\end{theorem}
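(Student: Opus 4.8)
\textbf{Proof plan for Theorem \ref{T:bcohofg}.}

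The plan is to deduce both statements from the two spectral sequences \eqref{eq:spectral1} and \eqref{eq:spectral2} together with Theorem~\ref{T:ucohofg}, reducing everything to a permanent-cycle argument for the polynomial part. First I would establish (a). By the LHS spectral sequence for the normal Hopf subalgebra $u_{\zeta}({\mathfrak u}) \trianglelefteq u_{\zeta,\Gamma}({\mathfrak b})$ with quotient $u_{\zeta,\Gamma}({\mathfrak t})$, and the exactness of the fixed-point functor $(-)^{u_{\zeta,\Gamma}({\mathfrak t})}$ (valid since $u_{\zeta,\Gamma}({\mathfrak t})$ is the group algebra over ${\mathbb C}$ of a finite group, hence semisimple), the spectral sequence collapses to give $\operatorname{H}^\bullet(u_{\zeta,\Gamma}({\mathfrak b}),{\mathbb C}) \cong \operatorname{H}^\bullet(u_{\zeta}({\mathfrak u}),{\mathbb C})^{u_{\zeta,\Gamma}({\mathfrak t})}$. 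Since $u_{\zeta,\Gamma}({\mathfrak t})$ acts on $\operatorname{H}^\bullet(u_{\zeta}({\mathfrak u}),{\mathbb C})$ by graded algebra automorphisms, and the latter is a finitely generated ${\mathbb C}$-algebra by Theorem~\ref{T:ucohofg}(b), the ring of invariants under the finite group $\Gamma/\Gamma'$ is again a finitely generated ${\mathbb C}$-algebra by the classical Noether finiteness theorem for invariants of finite groups acting on finitely generated commutative (or graded-commutative) algebras over a field. This gives (a).

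For (b) I would argue relative finite generation. The key input is that the polynomial subalgebra $S^\bullet({\mathfrak u}^*)^{[1]}$ consists of permanent cycles: this is exactly the statement $d_r(S^\bullet({\mathfrak u}^*)^{[1]}) = 0$ for $r \geq 1$ recalled from \cite[Proposition 5.6.3]{BNPP} in the discussion preceding the theorem, applied now inside the $u_{\zeta,\Gamma}({\mathfrak t})$-equivariant spectral sequence \eqref{eq:spectral2} (the differentials there are restrictions of those in \eqref{eq:spectral1}, so the vanishing is inherited). Since $u_{\zeta,\Gamma}({\mathfrak t})$ acts trivially on $S^\bullet({\mathfrak u}^*)^{[1]}$, this polynomial ring embeds as a central subalgebra of the abutment $\operatorname{H}^\bullet(u_{\zeta,\Gamma}({\mathfrak b}),{\mathbb C})$, and moreover $\operatorname{H}^\bullet(u_{\zeta,\Gamma}({\mathfrak b}),{\mathbb C})$ is a finite module over the image of $S^\bullet({\mathfrak u}^*)^{[1]}$, because the associated graded $\operatorname{H}^\bullet(\text{gr } u_{\zeta}({\mathfrak u}),{\mathbb C})^{u_{\zeta,\Gamma}({\mathfrak t})} = \big(\bigoplus_{2a+b=n} S^a({\mathfrak u}^*)^{[1]} \otimes \Lambda^b_\zeta\big)^{u_{\zeta,\Gamma}({\mathfrak t})} = S^\bullet({\mathfrak u}^*)^{[1]} \otimes (\Lambda^\bullet_\zeta)^{u_{\zeta,\Gamma}({\mathfrak t})}$ is finite over $S^\bullet({\mathfrak u}^*)^{[1]}$ (the exterior factor being finite-dimensional), and finiteness passes up a convergent spectral sequence of modules with suitably bounded filtration.

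Now for an arbitrary finite-dimensional $u_{\zeta,\Gamma}({\mathfrak b})$-module $M$, the same two-step filtration-and-LHS machinery applies with coefficients in $M$: the filtration of \cite[Section 2.9]{BNPP} is stable under the relevant group-like actions, so there is a module spectral sequence converging to $\operatorname{H}^\bullet(u_{\zeta,\Gamma}({\mathfrak b}),M)$ whose $E_1$-page is finitely generated over the $E_1$-page of \eqref{eq:spectral2} (finite-dimensionality of $M$ makes $\operatorname{H}^\bullet(\text{gr } u_{\zeta}({\mathfrak u}), M)$ finite over $\operatorname{H}^\bullet(\text{gr } u_{\zeta}({\mathfrak u}),{\mathbb C})$, hence finite over $S^\bullet({\mathfrak u}^*)^{[1]}$), and this finiteness is preserved through the exact fixed-point functor and up the convergent spectral sequence. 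Hence $\operatorname{H}^\bullet(u_{\zeta,\Gamma}({\mathfrak b}),M)$ is finitely generated over $\operatorname{H}^\bullet(u_{\zeta,\Gamma}({\mathfrak b}),{\mathbb C})$, proving (b).

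The main obstacle I anticipate is the bookkeeping needed to transfer module-finiteness across the spectral sequences: one must check that the $S^\bullet({\mathfrak u}^*)^{[1]}$-module structure is compatible with the differentials and filtrations on every page (so that $E_\infty$, and then the abutment, stay finite), and that the $u_{\zeta,\Gamma}({\mathfrak t})$-action is by $S^\bullet({\mathfrak u}^*)^{[1]}$-linear maps so that taking invariants commutes with the module structure. All of this is routine given the explicit descriptions imported from \cite{BNPP}, but it is where the real work lies; the invariant-theory and Noetherianity inputs are standard once the permanent-cycle fact is in hand.
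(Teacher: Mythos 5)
Your overall direction is sound and uses the same raw materials (the filtration from \cite{BNPP}, the two spectral sequences, and \thref{T:ucohofg}), but the route differs from the paper's in both parts, most significantly in (b).

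For (a), you invoke Noether finiteness for the invariants of the finite group $\Gamma/\Gamma'$ acting on $\operatorname{H}^\bullet(u_\zeta(\mathfrak{u}),\mathbb{C})$. This works (the graded-commutativity issue is handled in characteristic $0$ by passing to the commutative even part, over which the full ring is finite), but the paper's argument is more direct: since $d_r(S) = 0$ for $r\geq 1$ in the invariant spectral sequence \eqref{eq:spectral2} and the associated graded of the abutment is visibly finite over $S = S^\bullet(\mathfrak{u}^*)^{[1]}$, one immediately gets $R$ finite over $S$, which is stronger than mere finite generation of $R$ and is exactly what is needed later. Your Noether argument, if unwound, essentially reuses $T$ being finite over $S$ anyway, so nothing is gained by routing through Noether.

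For (b), the difference is substantial. You propose to push an arbitrary finite-dimensional $M$ through a module version of the filtration spectral sequence \eqref{eq:spectral1} followed by LHS, and you correctly flag the bookkeeping as the hard part. There is indeed a real gap here: you assert that finite-dimensionality of $M$ makes $\operatorname{H}^\bullet(\mathrm{gr}\,u_\zeta(\mathfrak{u}),M)$ finite over $\operatorname{H}^\bullet(\mathrm{gr}\,u_\zeta(\mathfrak{u}),\mathbb{C})$, but this is a nontrivial statement about the associated graded algebra that would itself need a proof or a citation, and you would further need to equip $M$ with a compatible filtration and verify that the $S$-module structure propagates through both spectral sequences. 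The paper sidesteps all of this with a much lighter argument: reduce to the simple modules (which are one-dimensional, by \prref{P:properties}(a)) via composition series and the long exact sequence; then observe that the LHS identification gives $\operatorname{H}^\bullet(u_{\zeta,\Gamma}(\mathfrak{b}),\lambda)\cong A_\lambda := \operatorname{Hom}_{u_{\zeta,\Gamma}(\mathfrak{t})}(-\lambda, T)$ with $T=\operatorname{H}^\bullet(u_\zeta(\mathfrak{u}),\mathbb{C})$, and that $T \cong \bigoplus_{\lambda} A_\lambda$ as $S$-modules. Since $T$ is finite over $S$ by \thref{T:ucohofg}(a) and $S$ is Noetherian, each direct summand $A_\lambda$ is finite over $S\subseteq R$, hence over $R$. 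No module spectral sequence is needed at all. Your approach is workable in principle but would require importing (or reproving) the finite-generation result for $\mathrm{gr}\,u_\zeta(\mathfrak{u})$ with arbitrary coefficients; the paper's reduction to simples makes that unnecessary.
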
 

\begin{proof} (a) Let $R:=\operatorname{H}^{\bullet}(u_{\zeta,\Gamma}({\mathfrak b}),{\mathbb C})$. 
From Theorem~\ref{T:ucohofg}(a), and the spectral sequence (\ref{eq:spectral2}), we have polynomial ring 
$S:=S^{\bullet}({\mathfrak u}^{*})^{[1]}$ with $d_{r}(S)=0$ for $r\geq 1$. Consequently, $R$ finitely generated over $S$. This shows (a). 

(b) By using induction on the composition length of $M$ and the long exact sequence in cohomology one can reduce the statement to showing that 
 $\operatorname{H}^{\bullet}(u_{\zeta,\Gamma}({\mathfrak b}),M)$ is finitely generated over $R$ for $M$ a simple $u_{\zeta,\Gamma}({\mathfrak b})$-module. 

The simple $u_{\zeta,\Gamma}({\mathfrak b})$-modules are one-dimensional and indexed by $\lambda\in \widehat{\Gamma/\Gamma'}$. 
By using the LHS spectral sequence, one has  
$$\operatorname{H}^{\bullet}(u_{\zeta,\Gamma}({\mathfrak b}),\lambda)\cong 
\operatorname{Hom}_{u_{\zeta,\Gamma}({\mathfrak t})}(-\lambda,\operatorname{H}^{\bullet}(u_{\zeta}({\mathfrak u}),{\mathbb C}))=A_{\lambda}.$$ 
Now $S$ acts on $\operatorname{H}^{\bullet}(u_{\zeta,\Gamma}({\mathfrak b}),\lambda)$ and thus acts on $A_{\lambda}$. This 
action is compatible with the action on $T=\operatorname{H}^{\bullet}(u_{\zeta}({\mathfrak u}),{\mathbb C})$. We have $T\cong \oplus_{\lambda\in \widehat{\Gamma/\Gamma'}} A_{\lambda}$, 
and by Theorem~\ref{T:ucohofg}, $T$ is finitely generated over $S$. Consequently, $A_{\lambda}$ is finitely generated over $S$, thus finitely generated 
over $R$. 
\end{proof} 

\subsection{Calculation of the Cohomology Ring} In this section we calculate the cohomology ring 
$R:=\operatorname{H}^{\bullet}(u_{\zeta,\Gamma}({\mathfrak b}),{\mathbb C})$ for ${\ell}>h$. 
We will need the following fact proved by Andersen and Jantzen \cite[\S 2.2 statement (2)]{AJ}. 

\begin{lemma}\label{L:fixedpoints} \cite{AJ} Let $\rootsys$ be an irreducible root system. For every weight $\lambda$ of $\Lambda^\bullet({\mathfrak u}^*)$ 
and simple root $\alpha_i$, 
\[
|\langle \lambda, \alpha_i\spcheck \rangle + 1 | \leq h -1,
\]
where $h$ is the Coxeter number for $\rootsys$. 
\end{lemma}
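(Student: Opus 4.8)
The plan is to reduce the inequality to a combinatorial statement about the root system $\rootsys$ and then to settle that statement. First I would record the weights of $\Lambda^\bullet({\mathfrak u}^*)$: identifying ${\mathfrak u}^*$ with the span of the duals of the root vectors, its weights are $\{-\beta : \beta \in \rootsys^+\}$, so the weights of $\Lambda^\bullet({\mathfrak u}^*)$ are precisely the elements $\lambda = -\sum_{\beta\in S}\beta$ with $S\subseteq\rootsys^+$. Writing $\rho = \tfrac12\sum_{\beta\in\rootsys^+}\beta$, so that $\langle\rho,\alpha_i\spcheck\rangle = 1$, the quantity to be bounded becomes $\langle\lambda,\alpha_i\spcheck\rangle + 1 = \langle\lambda+\rho,\alpha_i\spcheck\rangle = 1 - \sum_{\beta\in S}\langle\beta,\alpha_i\spcheck\rangle$.

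Next I would eliminate the dependence on $S$. As $S$ runs over all subsets of $\rootsys^+$, this expression is largest when $S=\{\beta : \langle\beta,\alpha_i\spcheck\rangle < 0\}$ and smallest when $S=\{\beta : \langle\beta,\alpha_i\spcheck\rangle > 0\}$. The simple reflection $s_i$ restricts to an involution of $\rootsys^+\setminus\{\alpha_i\}$ that reverses the sign of the pairing with $\alpha_i\spcheck$; pairing up $\beta$ with $s_i\beta$ shows that the sum of the positive values $\langle\beta,\alpha_i\spcheck\rangle$ over $\beta\in\rootsys^+\setminus\{\alpha_i\}$ equals the sum of the absolute values of the negative ones, a common quantity I will call $N_i$. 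Since $\langle\alpha_i,\alpha_i\spcheck\rangle = 2$, the maximum above is $1+N_i$ and the minimum is $-(N_i+1)$, so $|\langle\lambda,\alpha_i\spcheck\rangle + 1| \le N_i+1$ for every weight $\lambda$ of $\Lambda^\bullet({\mathfrak u}^*)$. Hence the lemma follows from the root-system inequality $N_i \le h-2$, equivalently $\sum_{\beta\in\rootsys^+}|\langle\beta,\alpha_i\spcheck\rangle| \le 2(h-1)$.

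Finally I would prove this last inequality. The plan here is to pass to the $\mathfrak{sl}_2$-triple attached to $\alpha_i$ inside the simple Lie algebra with root system $\rootsys$ and decompose the adjoint module into irreducibles for it, equivalently, partition $\rootsys$ into $\alpha_i$-root strings; then $\sum_{\beta\in\rootsys^+}|\langle\beta,\alpha_i\spcheck\rangle|$ splits into one small contribution per string, because an $\alpha_i$-root string has at most four members (at most three outside type $G_2$, at most two in the simply laced types). Combining this with the classical identities $|\rootsys| = nh$ and $\langle 2\rho,\alpha_i\spcheck\rangle = 2$ yields the bound, with the finitely many cases in which $\alpha_i$ is a short simple root of a non-simply-laced system (where long $\alpha_i$-strings occur) dispatched by direct inspection; this last inequality is the statement recorded in \cite[\S 2.2]{AJ}. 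I expect this combinatorial estimate to be the main obstacle: the reductions in the first two paragraphs are formal, but bounding $\sum_{\beta\in\rootsys^+}|\langle\beta,\alpha_i\spcheck\rangle|$ uniformly by $2(h-1)$ genuinely uses the structure of root strings and the Coxeter number, and the cleanest complete argument ultimately reduces to a short case check over the irreducible types.
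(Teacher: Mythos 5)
The paper does not give its own argument for this lemma; it is stated with the citation \cite{AJ} and used as a known fact (the reference is to Andersen--Jantzen, \S 2.2 statement (2)). So there is no ``paper proof'' to compare against, and you should be judged on whether your argument is actually complete.

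Your first two paragraphs are correct and are the right reduction: the weights of $\Lambda^\bullet(\mathfrak{u}^*)$ are $-\sum_{\beta\in S}\beta$, the extremal values of $\langle\lambda+\rho,\alpha_i\spcheck\rangle$ are $\pm(N_i+1)$, and the lemma is equivalent to $N_i\le h-2$. The gap is in the last paragraph, and you flag it yourself: the three facts you list (root strings have length $\le 4$, $|\rootsys|=nh$, $\langle 2\rho,\alpha_i\spcheck\rangle=2$) do not in fact ``combine to yield'' the bound $\sum_{\beta\in\rootsys^+}|\langle\beta,\alpha_i\spcheck\rangle|\le 2(h-1)$. Already in the simply-laced case, these facts reduce the problem to showing that the number of positive roots not orthogonal to $\alpha_i$ is at most $2h-3$, and $|\rootsys|=nh$ alone gives no control on the size of $\rootsys\cap\alpha_i^\perp$; what one actually needs is the identity that a fixed root in a simply-laced system pairs to $1$ with exactly $2(h-2)$ roots, which is itself a case-by-case count. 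As written, the plan quietly reduces a type-by-type check to another type-by-type check.

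There is, however, a clean uniform finish starting from exactly where your paragraph two leaves off, which avoids all case analysis. The set $S_0=\{\beta\in\rootsys^+ : \langle\beta,\alpha_i\spcheck\rangle<0\}$ realizing your maximum $1+N_i=\langle\rho-\sum_{\beta\in S_0}\beta,\alpha_i\spcheck\rangle$ is \emph{biconvex}: if $\beta_1,\beta_2\in S_0$ and $\beta_1+\beta_2\in\rootsys^+$ then $\beta_1+\beta_2\in S_0$, and likewise for the complement, since the sign of $\langle-,\alpha_i\spcheck\rangle$ is additive. Hence $S_0$ is the inversion set of some $w\in W$, so $\rho-\sum_{\beta\in S_0}\beta=w\rho$ and therefore $1+N_i=\langle w\rho,\alpha_i\spcheck\rangle=\langle\rho,w^{-1}\alpha_i\spcheck\rangle$. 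Since $\langle\rho,\gamma\spcheck\rangle$ is the height of the coroot $\gamma\spcheck$ in $\rootsys^\vee$, this is bounded by the height of the highest root of $\rootsys^\vee$, which is $h(\rootsys^\vee)-1=h-1$ (Coxeter numbers are self-dual). That gives $N_i\le h-2$ with no inspection of types, and your paragraphs one and two then finish the proof. I would replace your third paragraph with this argument; the $\mathfrak{sl}_2$-string framework you set up is not wrong, but it is not leading to a uniform estimate on its own.
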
 
The following theorem provides a natural generalization to the fundamental result of 
Ginzburg and Kumar \cite[Theorem 2.5]{GK}. 

\begin{theorem} 
\label{cohom}
Let ${\ell}$ satisfy Assumption~\ref{A:assumption2} (in particular, ${\ell}>h$), $\zeta$ be an ${\ell}$th root of unity, and 
$u_{\zeta,\Gamma}({\mathfrak b})$ be a small quantum group for a Borel subalgebra. Then 
\begin{itemize}
\item[(a)] $\operatorname{H}^{2\bullet}(u_{\zeta,\Gamma}({\mathfrak b}),{\mathbb C})\cong S^{\bullet}({\mathfrak u}^{*})^{[1]}$; 
\item[(b)] $\operatorname{H}^{2\bullet+1}(u_{\zeta,\Gamma}({\mathfrak b}),{\mathbb C})=0$. 
\end{itemize} 
\end{theorem}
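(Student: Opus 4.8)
The plan is to run the Ginzburg--Kumar argument through the two spectral sequences \eqref{eq:spectral1} and \eqref{eq:spectral2} of the previous subsection, the single new input being a weight estimate based on Lemma~\ref{L:fixedpoints} and the hypothesis $\ell>h$. Recall that $R:=\operatorname{H}^\bullet(u_{\zeta,\Gamma}(\mathfrak b),\mathbb C)\cong\operatorname{H}^\bullet(u_\zeta(\mathfrak u),\mathbb C)^{u_{\zeta,\Gamma}(\mathfrak t)}$ and that \eqref{eq:spectral2} is a multiplicative spectral sequence converging to $R$ whose $E_1$-term is the space of $u_{\zeta,\Gamma}(\mathfrak t)$-invariants of $\operatorname{H}^n(\operatorname{gr}u_\zeta(\mathfrak u),\mathbb C)\cong\bigoplus_{2a+b=n}S^a(\mathfrak u^*)^{[1]}\otimes\Lambda^b_\zeta$, graded by the internal degree $(i)$. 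Since $u_{\zeta,\Gamma}(\mathfrak t)$ acts trivially on $S^\bullet(\mathfrak u^*)^{[1]}$, computing $E_1$ reduces to computing the toral invariants $(\Lambda^b_\zeta)^{u_{\zeta,\Gamma}(\mathfrak t)}$.

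The main step is the claim that $(\Lambda^b_\zeta)^{u_{\zeta,\Gamma}(\mathfrak t)}=0$ for every $b\geq 1$; this is where Assumption~\ref{A:assumption2} is used, namely that $\ell>h$ and $\ell$ is coprime to each $d_i$. Any weight $\nu$ of $\Lambda^b_\zeta$ with $b\geq 1$ is a nonzero element of $\mathbb Z\rootsys\subseteq X$: it is a sum of $b\geq 1$ roots of the form $-\beta$, $\beta\in\rootsys^+$, all of which lie in one open half-space of $\mathfrak t^*_{\mathbb R}$. If the weight-$\nu$ subspace of $\Lambda^b_\zeta$ were fixed by $u_{\zeta,\Gamma}(\mathfrak t)$, then by \eqref{u-act} we would have $\zeta^{\langle\nu,\mu\rangle}=1$ for all $\mu\in\Gamma$; since $\mathbb Z\rootsys\subseteq\Gamma$ and $\zeta$ is a primitive $\ell$th root of unity, this forces $\langle\nu,\alpha_i\rangle\in\ell\mathbb Z$ for every simple root $\alpha_i$. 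Using $\alpha_i\spcheck=\alpha_i/d_i$ together with $\gcd(\ell,d_i)=1$ (and $\langle\nu,\alpha_i\spcheck\rangle\in\mathbb Z$, as $\nu\in X$), we obtain $\langle\nu,\alpha_i\spcheck\rangle\in\ell\mathbb Z$ for all $i$. But Lemma~\ref{L:fixedpoints} gives $|\langle\nu,\alpha_i\spcheck\rangle+1|\leq h-1$, hence $\langle\nu,\alpha_i\spcheck\rangle\in\{-h,\dots,h-2\}$, and as $\ell>h$ the only multiple of $\ell$ in that range is $0$. Thus $\langle\nu,\alpha_i\spcheck\rangle=0$ for all $i$, so $\nu=0$, a contradiction.

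Granting the claim, $E_1$ of \eqref{eq:spectral2} is the Frobenius-twisted polynomial ring $S^\bullet(\mathfrak u^*)^{[1]}$, concentrated in even total degree with $S^a(\mathfrak u^*)^{[1]}$ in cohomological degree $2a$. It then remains to see that \eqref{eq:spectral2} degenerates at $E_1$. In the proof of Theorem~\ref{T:ucohofg} one shows $d_r(S^\bullet(\mathfrak u^*)^{[1]})=0$ for all $r\geq 1$ in \eqref{eq:spectral1}, and since \eqref{eq:spectral2} is obtained by applying the exact functor $(-)^{u_{\zeta,\Gamma}(\mathfrak t)}$, the same vanishing holds there; as its entire $E_1$-term now equals $S^\bullet(\mathfrak u^*)^{[1]}$, every higher differential is zero and $E_1=E_\infty$. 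Therefore $\operatorname{gr}R\cong S^\bullet(\mathfrak u^*)^{[1]}$ as bigraded rings, which gives (b) at once; and since $S^\bullet(\mathfrak u^*)^{[1]}$ consists of permanent cycles it lifts to a multiplicative section $S^\bullet(\mathfrak u^*)^{[1]}\to\operatorname{H}^{2\bullet}(u_{\zeta,\Gamma}(\mathfrak b),\mathbb C)$ which is an isomorphism on associated graded, hence an isomorphism, giving (a). The one genuinely new ingredient, and the only real obstacle, is the weight estimate of the second paragraph; the surrounding spectral-sequence bookkeeping is exactly as in \cite{BNPP} and \cite{GK}.
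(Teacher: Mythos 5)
Your proposal reproduces the paper's own argument: pass to the second spectral sequence \eqref{eq:spectral2}, reduce to computing $[\Lambda^b_\zeta]^{u_{\zeta,\Gamma}(\mathfrak t)}$, show any invariant weight $\lambda$ must satisfy $\langle\lambda,\alpha_i\spcheck\rangle\in\ell\mathbb Z$ via the coprimality of $\ell$ and $d_i$, invoke Lemma~\ref{L:fixedpoints} together with $\ell>h$ to force $\lambda=0$ and hence $b=0$, and then collapse the spectral sequence using the vanishing $d_r(S^\bullet(\mathfrak u^*)^{[1]})=0$ inherited from \eqref{eq:spectral1}. The extra details you spell out (that a weight of $\Lambda^b(\mathfrak u^*)$ with $b\geq1$ is a sum of $b$ negative roots and hence nonzero, and the lifting of $E_\infty$ to the actual ring via permanent cycles) are implicit in the paper's terser presentation; the approach is the same.
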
 

\begin{proof} Consider the spectral sequence (\ref{eq:spectral2}) and 
$$\operatorname{H}^{n}(\text{gr }u_{\zeta}({\mathfrak u}),{\mathbb C})^{u_{\zeta,\Gamma}({\mathfrak t})}
\cong \bigoplus_{2a+b=n} S^{a}({\mathfrak u}^{*})^{[1]}\otimes [\Lambda^{b}_{\zeta}]^{u_{\zeta,\Gamma}({\mathfrak t})}.$$ 
The ${u_{\zeta,\Gamma}({\mathfrak t})}$-weights of $\Lambda^{b}_{\zeta}$ come from the ${\mathfrak t}$-weights of $\Lambda^\bullet({\mathfrak u}^*)$. 
If $\lambda$ is a weight of $\Lambda^\bullet({\mathfrak u}^*)$ corresponding to an element in $[\Lambda^{b}_{\zeta}]^{u_{\zeta,\Gamma}({\mathfrak t})}$, then 
$\langle \lambda, \Gamma \rangle \subseteq \ell {\mathbb Z}$. Therefore, $\langle \lambda, \alpha_i \rangle \in \ell {\mathbb Z}$ for all $1 \leq i \leq n$. For each simple root $\alpha_i$ of $\rootsys$ we have
\[
\langle \lambda, \alpha_i\spcheck \rangle = \frac{1}{d_i} \langle \lambda, \alpha_i \rangle.
\]
Since $\langle \lambda, \alpha_i\spcheck \rangle$ is an integer, $\langle \lambda, \alpha_i \rangle \in \ell {\mathbb Z}$ and $\gcd(\ell, d_i) =1$, 
we have that that $\langle \lambda, \alpha_i\spcheck \rangle$ is a multiple of $\ell$. Lemma \ref{L:fixedpoints} gives that 
\[
|\langle \lambda, \alpha_i\spcheck \rangle| \leq h < \ell. 
\]
The combination of the two facts implies that $\langle \lambda, \alpha_i\spcheck \rangle =0$ for all simple roots $\alpha_i$. Thus $\lambda =0$ and 
\begin{equation} \label{eq:exteriorfixedpts} 
[\Lambda^{b}_{\zeta}]^{u_{\zeta,\Gamma}({\mathfrak t})}\cong 
\begin{cases} 0 & \text{if $b>0$} \\
 {\mathbb C} & \text{if $b=0$}. 
\end{cases} 
\end{equation} 

Consequently, the $E_{1}^{i,j}$-term of the spectral sequence only contains terms of the form $S^{a}({\mathfrak u}^{*})^{[1]}$ where $2a=i+j$. 
From Theorem~\ref{T:bcohofg}, $d_{r}(S^{\bullet}({\mathfrak u}^{*})^{[1]})=0$ for $r\geq 1$. Thus, the spectral sequence (\ref{eq:spectral2}) 
collapses and yields (a) and (b). 
\end{proof}

\subsection{Classification of Tensor Ideals} Let $\stmod(u_{\zeta,\Gamma}({\mathfrak b}))$ be the 
stable module category of finitely generated $u_{\zeta,\Gamma}({\mathfrak b})$-modules. 
The stable module category for all $u_{\zeta,\Gamma}({\mathfrak b})$-modules will be denoted by 
$\StMod(u_{\zeta,\Gamma}({\mathfrak b}))$. The category $\stmod(u_{\zeta,\Gamma}({\mathfrak b}))$ is 
a monoidal triangulated category. The goal of this section will be to describe the thick tensor ideals in 
$\stmod(u_{\zeta,\Gamma}({\mathfrak b}))$ and its Balmer spectrum. 

Let $R:=\operatorname{H}^{\bullet}(u_{\zeta,\Gamma}({\mathfrak b}),{\mathbb C})$ be the cohomology ring 
for the small quantum group $u_{\zeta,\Gamma}({\mathfrak b})$. In Theorem~\ref{T:bcohofg}(a), it was shown that 
$R$ is a a finitely generated ${\mathbb C}$-algebra. Therefore, $Y=\text{Proj}(R)$, 
the space of (nontrivial) homogeneous prime ideals of $R$, is a Noetherian topological space. In fact, $Y$ is a Zariski space. 

For brevity, the set of subsets, closed subsets, and specialization-closed 
subsets of $Y$ will be denoted respectively by $\mc{X}, \mc{X}_{cl},$ and $\mc{X}_{sp}$.
The finite generation result in Theorem~\ref{T:bcohofg}(b) can be used to define a (cohomological) support variety theory for $u_{\zeta,\Gamma}({\mathfrak b})$. 
Let $W(-)$ be the cohomological support $\stmod ( u_{\zeta, \Gamma}(\mathfrak{b})) \to \mc{X}_{cl}$, defined by
$$W(M)=\{ \mathfrak{p} \in \Proj R: \Ext^\bullet(M, M)_{\mf{p}} \not = 0 \}.$$ This extends to a support map $\StMod( u_{\zeta, \Gamma}(\mathfrak{b})) \to \mc{X}_{sp}$ by \cite[Theorem 5.5]{BIK1}, which we will also denote by $W(-)$. 

Let 
\[
\Phi= \Phi_{W} : \{\text{thick right ideals of $\stmod(u_{\zeta,\Gamma}({\mathfrak b}))$}\}  \to \mc{X}
\]
be the map given by \eqref{Phi}. Note that it takes values in $\mc{X}_{sp}$
because $W(M) \in \mc{X}_{cl}$ for all $M \in \stmod(u_{\zeta,\Gamma}({\mathfrak b}))$. 
On the other hand, we can define an assignment 
\[
\Theta : \mc{X}_{sp} \to \{\text{thick right ideals of $\stmod(u_{\zeta,\Gamma}({\mathfrak b}))$}\} 
\]
by 
$$
\Theta(Z)=\{M\in \stmod(u_{\zeta,\Gamma}({\mathfrak b})) \mid W(M)\subseteq Z\} \quad 
\mbox{for} \quad Z\in \mc{X}_{sp}.
$$ 
We can now state the theorem that classifies thick ideals in $\stmod(u_{\zeta,\Gamma}({\mathfrak b}))$. 
Our results extend the results due to the authors in \cite[Theorems 8.2.1, 8.3.1]{NVY1}. 

\begin{theorem} \label{T:classificationl} Let $u_{\zeta,\Gamma}({\mathfrak b})$ be the small quantum group for the Borel subalgebra 
for an arbitrary finite dimensional complex simple Lie algebra. Assume that ${\ell}$ satisfies Assumption~\ref{A:assumption2} (in particular, ${\ell}>h$), 
which implies that $R\cong S^{\bullet}({\mathfrak u}^{*})$. 

\begin{itemize}
\item[(a)] The above $\Phi$ and $\Theta$ are mutually inverse bijections
$$
\{\text{thick right ideals of $\stmod(u_{\zeta,\Gamma}({\mathfrak b}))$}\} \begin{array}{c} {\Phi} \atop {\longrightarrow} \\ {\longleftarrow}\atop{\Theta} \end{array}  
\{\text{specialization closed sets of $\operatorname{Proj}(R)$} \}. 
$$
\item[(b)] Every thick right ideal of $\stmod(u_{\zeta,\Gamma}({\mathfrak b}))$ is two-sided. 
\item[(c)] There exists a homeomorphism $f:$ $\operatorname{Proj}(R) \to \Spc(\stmod(u_{\zeta,\Gamma}({\mathfrak b})))$. 
\end{itemize} 
\end{theorem}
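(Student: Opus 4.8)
The plan is to prove Theorem~\ref{T:classificationl} by following the template of \cite[Theorems 8.2.1, 8.3.1]{NVY1} for $u_\zeta(\mathfrak{b})$, using the cohomology computation in Theorem~\ref{cohom} and the finite-generation results in Theorem~\ref{T:bcohofg} to reduce to a stratification-type argument. First I would observe that under Assumption~\ref{A:assumption2}, Theorem~\ref{cohom} gives $R = \operatorname{H}^\bullet(u_{\zeta,\Gamma}(\mathfrak{b}),\mathbb{C}) \cong S^\bullet(\mathfrak{u}^*)$, a polynomial ring, so $\operatorname{Proj}(R)$ is weighted projective space and in particular a Noetherian (indeed Zariski) space. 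The key representation-theoretic inputs are: (1) the action of $\Pi$ on $\operatorname{Proj}(R)$ is trivial (Proposition~\ref{P:properties}(d)); and (2) for any module $Q$ and $\lambda \in \widehat{\Gamma/\Gamma'}$ one has $\lambda \otimes Q \otimes \lambda^{-1} \cong Q^{\gamma_\lambda}$ (Proposition~\ref{P:properties}(b)), so conjugation by one-dimensional simples realizes exactly the $\Pi$-twists.

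For part (b), I would argue that every thick right ideal is automatically two-sided. The point is that left tensoring a module $M$ by an arbitrary object can, up to the filtration by composition series, be reduced to left tensoring by the one-dimensional simples $\lambda$ (since the simples of $u_{\zeta,\Gamma}(\mathfrak{b})$ are exactly the $\lambda$ by Proposition~\ref{P:properties}(a), and a thick subcategory closed under triangles and summands is closed under extensions). But $\lambda \otimes M \cong M^{\gamma_\lambda} \otimes \lambda$ lies in the right ideal generated by $M$ since $W(M^{\gamma_\lambda}) = \gamma_\lambda \cdot W(M) = W(M)$ by the triviality of the $\Pi$-action on $\operatorname{Proj}(R)$, so $M^{\gamma_\lambda}$ has the same support as $M$ and hence (granting part (a)) lies in the same thick right ideal. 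Thus closure under right tensoring forces closure under left tensoring. Part (c) then follows formally: combining part (a) with the universality of $\Spc$ (Theorem~\ref{T:universality}) in the style of \cite[Theorems 6.2.1, 7.3.1]{NVY1}, the bijection $\Phi \leftrightarrow \Theta$ between thick (right = two-sided) ideals and specialization-closed subsets of the Zariski space $\operatorname{Proj}(R)$ yields a homeomorphism $\operatorname{Proj}(R) \xrightarrow{\sim} \Spc(\stmod(u_{\zeta,\Gamma}(\mathfrak{b})))$, identifying points with the minimal nonzero thick ideals on each side.

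The heart of the matter, and the main obstacle, is part (a): showing $\Phi$ and $\Theta$ are mutually inverse. That $\Phi \circ \Theta = \operatorname{id}$ and $\Theta \circ \Phi \supseteq \operatorname{id}$ are formal; the real content is $\Theta(\Phi(\bI)) \subseteq \bI$, i.e., that $W$ detects membership in a thick right ideal — equivalently, that the support theory $W$ \emph{stratifies} $\stmod(u_{\zeta,\Gamma}(\mathfrak{b}))$. For this I would transport the known stratification for the standard small quantum Borel $u_\zeta(\mathfrak{b})$ (established in \cite{NVY1}, ultimately relying on \cite{BIK1,BIK2}, the Ginzburg--Kumar cohomology \cite{GK}, and the finite-generation machinery) along the Hopf algebra embedding $u_{\zeta,\mathbb{Z}\rootsys}(\mathfrak{b}) \hookrightarrow u_{\zeta,\Gamma}(\mathfrak{b})$ of Remark~\ref{r}, using restriction/induction to compare supports and the fact that $u_{\zeta,\Gamma}(\mathfrak{b})$ is a finite free module over $u_\zeta(\mathfrak{b})$. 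Concretely, one checks that a module $M$ is killed (is zero in the stable category) after localizing at a homogeneous prime $\mathfrak{p}$ if and only if $\mathfrak{p} \notin W(M)$, using Theorem~\ref{T:bcohofg}(b) to run the Rickard idempotent/local cohomology argument, and then that the resulting stratification is insensitive to replacing $\mathbb{Z}\rootsys$ by $\Gamma$ because the cohomology rings agree (both equal $S^\bullet(\mathfrak{u}^*)$ by Theorem~\ref{cohom}) and the $\Pi$-twists act trivially on $\operatorname{Proj}(R)$. I expect the technical bookkeeping in this transport — verifying compatibility of the local cohomology functors with restriction along the embedding and handling the non-semisimple part of $u_{\zeta,\Gamma}(\mathfrak{t})/u_\zeta(\mathfrak{t})$ — to be where the genuine work lies.
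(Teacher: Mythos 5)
Your overall strategy (apply the NVY1 classification machinery after verifying its hypotheses via Theorem~\ref{T:bcohofg}, Theorem~\ref{cohom}, and Proposition~\ref{P:properties}) is correct in spirit, and your argument for part~(b) is a reasonable, somewhat more direct, variant of what the paper does. But there are two genuine gaps.

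First, for part~(a), you propose to \emph{transport} the known stratification for $u_\zeta(\mathfrak{b})$ along the Hopf algebra embedding $u_{\zeta,\mathbb{Z}\rootsys}(\mathfrak{b}) \hookrightarrow u_{\zeta,\Gamma}(\mathfrak{b})$ via restriction/induction, and you concede that the compatibility bookkeeping is where the work lies. The paper does not do this at all: part~(a) is obtained by directly invoking \cite[Theorem 7.4.3]{NVY1} for the category $\stmod(u_{\zeta,\Gamma}(\mathfrak{b}))$ itself, with the finite-generation hypothesis supplied by Theorem~\ref{T:bcohofg} and \cite[Assumption 7.2.1]{NVY1} supplied by the arguments of \cite[Section 7.4]{BKN} together with Lemma~\ref{Pi-supp}. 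Lemma~\ref{Pi-supp} (the statement $W(Q) = W(Q^*)$, proved by rigidity, the composition series by one-dimensional simples, Proposition~\ref{P:properties}(b--c), and $Q^{**}\cong Q$) is a new and necessary ingredient that your proposal never mentions; without it you cannot verify the duality piece of \cite[Assumption 7.2.1]{NVY1}. Meanwhile the transport route you propose faces real obstacles: the embedding is not surjective, $u_{\zeta,\Gamma}(\mathfrak{t})$ need not be generated by the image of $u_\zeta(\mathfrak{t})$, and there is no a priori bijection between the two Balmer spectra to transport a stratification across. It is not clear this route closes.

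Second, for part~(c), your sketch (``combining part~(a) with universality\dots yields a homeomorphism, identifying points with minimal nonzero thick ideals'') omits the actual step the paper needs: one must verify that $W$ is a \emph{weak support datum}, i.e.\ that $\Phi(\langle \bI \otimes \bJ\rangle) = \Phi(\bI)\cap\Phi(\bJ)$. The paper proves this by showing $\langle \bI \otimes \bJ\rangle = \bI \cap \bJ$ using the fact that, by rigidity of $\stmod(u_{\zeta,\Gamma}(\mathfrak{b}))$ and Proposition~\ref{rigid-semiprime}, all thick two-sided ideals are semiprime (intersections of primes). Only then does \cite[Theorem 6.2.1]{NVY1} produce the homeomorphism of part~(c). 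Your proposal would need to supply this weak-support-datum verification explicitly; the ``formal'' universality argument alone does not give it.
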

For the proof of the theorem we will need the following auxiliary lemma
\begin{lemma}
\label{Pi-supp} In the setting of Theorem \ref{T:classificationl}, for every finite dimensional $u_{\zeta,\Gamma}({\mathfrak b})$-module $Q$ and its dual $Q^*$,
\[
W(Q) = W(Q^*).
\]
\end{lemma}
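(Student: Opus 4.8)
The plan is to show the cohomological support of a finite dimensional $u_{\zeta,\Gamma}({\mathfrak b})$-module $Q$ coincides with that of its dual $Q^*$ by reducing to the cohomology ring and exploiting the triviality of the $\Pi$-action on $\operatorname{Proj}(R)$ established in Proposition \ref{P:properties}(d). The cohomological support can be computed as $W(Q) = \operatorname{supp}_R \operatorname{H}^\bullet(u_{\zeta,\Gamma}({\mathfrak b}), Q \otimes Q^*)$, i.e.\ the support of the $R$-module $\Ext^\bullet_{u_{\zeta,\Gamma}({\mathfrak b})}(Q,Q)$, which is an $R$-submodule quotient description of the $\Ext$-algebra; equivalently $W(Q)$ is the support of $\operatorname{H}^\bullet(u_{\zeta,\Gamma}({\mathfrak b}), \mathrm{End}_{\mathbb C}(Q))$.

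First I would reduce to the case of a simple module. By Theorem \ref{T:bcohofg}(b), cohomology is finitely generated, and using the long exact sequences in cohomology attached to short exact sequences, one sees that $W$ is determined on composition factors in the sense that $W(Q) \subseteq \bigcup_{L} W(L)$ over composition factors $L$ of $Q$; but this inclusion alone is not enough. A cleaner route: since every object of $\stmod(u_{\zeta,\Gamma}({\mathfrak b}))$ is rigid, $W(Q) = W(Q^*)$ is actually a general fact about tensor triangulated support when the support has the tensor product property for the $\otimes$ with duals --- but that is what we are trying to establish, so instead I would argue directly. The key point is that for a one-dimensional module $\lambda \in \widehat{\Gamma/\Gamma'}$, Proposition \ref{P:properties}(b) gives $\lambda \otimes Q \otimes \lambda^{-1} \cong Q^{\gamma_\lambda}$, and the $\gamma_\lambda$ act trivially on $\operatorname{Proj}(R)$ by Proposition \ref{P:properties}(d); since twisting a module by $\gamma_\lambda$ twists its cohomology compatibly with the $\Pi$-action on $R$, we get $W(Q^{\gamma_\lambda}) = \gamma_\lambda \cdot W(Q) = W(Q)$, and on the other hand $W(\lambda \otimes Q \otimes \lambda^{-1})$ is sandwiched between $W(Q)$ and itself by the quasi support axiom (v''), so $W(\lambda \otimes Q) = W(Q)$ for all one-dimensional $\lambda$.

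Next I would handle the dual. Since the simple modules $\lambda$ are one-dimensional, $\lambda^* = \lambda^{-1}$, so the claim is immediate for simples: $W(\lambda) = W(\lambda^{-1})$ because both equal all of $\operatorname{Proj}(R)$ (each $\lambda$ is invertible, hence $\langle\lambda\rangle = \stmod$ and $W(\lambda) = Y$), or more elementarily because $\lambda \otimes \lambda^{-1} \cong {\mathbb C}$ and invertibility forces $W(\lambda) = W({\mathbb C}) = Y$. For general $Q$ I would use that $\operatorname{H}^\bullet(u_{\zeta,\Gamma}({\mathfrak b}), Q) \cong \operatorname{H}^\bullet(u_{\zeta,\Gamma}({\mathfrak b}), Q^*)^\vee$ up to a degree shift is \emph{not} quite right for support; instead, the robust statement is that $\Ext^\bullet(Q,Q) \cong \Ext^\bullet(Q^*,Q^*)$ as graded $R$-modules, via the natural isomorphism $\Hom_{\mathbb C}(Q,Q) \cong \Hom_{\mathbb C}(Q^*,Q^*)$ of $u_{\zeta,\Gamma}({\mathfrak b})$-modules (dualizing an endomorphism), which is an isomorphism of modules over the cohomology ring because the module structure on both sides is induced by the coalgebra structure and is natural. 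Hence their supports over $R$ agree, giving $W(Q) = W(Q^*)$.

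The main obstacle is making precise the compatibility between the $\Pi$-action on modules (via $\gamma_\lambda$-twists) and the $\Pi$-action on the cohomology ring $R$, and then concluding that $W$ is $\Pi$-equivariant; once that is in place the triviality in Proposition \ref{P:properties}(d) does the work. The secondary subtlety is justifying the $R$-module isomorphism $\Ext^\bullet(Q,Q) \cong \Ext^\bullet(Q^*,Q^*)$: one must check that the canonical ${\mathbb C}$-linear isomorphism $\mathrm{End}_{\mathbb C}(Q) \to \mathrm{End}_{\mathbb C}(Q^*)$, $\phi \mapsto \phi^*$, is a morphism of $u_{\zeta,\Gamma}({\mathfrak b})$-modules (this uses the antipode and is standard for Hopf algebras, with $\mathrm{End}_{\mathbb C}(Q) \cong Q \otimes Q^*$), and that the induced map on $\operatorname{H}^\bullet(u_{\zeta,\Gamma}({\mathfrak b}), -)$ is $R$-linear, which follows from functoriality of cup products. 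I expect these to be routine but notationally heavy; no genuinely new idea beyond Proposition \ref{P:properties} is needed.
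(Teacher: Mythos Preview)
Your argument has a genuine gap at the decisive step. You assert that the transpose map $\phi\mapsto\phi^*$ gives an isomorphism $\Hom_{\mathbb C}(Q,Q)\cong\Hom_{\mathbb C}(Q^*,Q^*)$ of $u_{\zeta,\Gamma}({\mathfrak b})$-modules and call this ``standard for Hopf algebras''. It is not: for a noncocommutative Hopf algebra $H$ the transpose is generally \emph{not} $H$-equivariant. Under the identifications $\mathrm{End}_{\mathbb C}(Q)\cong Q\otimes Q^*$ and $\mathrm{End}_{\mathbb C}(Q^*)\cong Q^*\otimes Q^{**}\cong Q^*\otimes Q$ (the last since $S^2$ is inner), your claim becomes $Q\otimes Q^*\cong Q^*\otimes Q$, which already fails in the Benson--Witherspoon examples of Section~\ref{BW1} and cannot be taken for granted here. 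Even if you retreat to the functorial duality isomorphism $\Ext^\bullet(Q,Q)\cong\Ext^\bullet(Q^*,Q^*)$ of graded rings, this is only semilinear over the antipode-induced automorphism of $R$, and you would still owe a proof that this automorphism acts trivially on $\operatorname{Proj}(R)$---a separate computation you have not addressed and which is not covered by Proposition~\ref{P:properties}.

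The paper instead combines rigidity with precisely the $\Pi$-invariance you worked out in your first paragraph. Since $Q$ is a summand of $Q\otimes Q^*\otimes Q$ by the zigzag identities, $W(Q)\subseteq W(Q\otimes Q^*\otimes Q)$. Filtering the left-hand copy of $Q$ by its one-dimensional composition factors and using axioms (ii)--(iv) gives
\[
W(Q\otimes Q^*\otimes Q)=\bigcup_{\lambda\in\widehat{\Gamma/\Gamma'}}W(\lambda\otimes Q^*\otimes Q).
\]
Now $\lambda\otimes Q^*\otimes Q\cong (Q^*)^{\gamma_\lambda}\otimes\lambda\otimes Q$ by Proposition~\ref{P:properties}(b), the quasi-support axiom (v'') gives $W((Q^*)^{\gamma_\lambda}\otimes\lambda\otimes Q)\subseteq W((Q^*)^{\gamma_\lambda})$, and Proposition~\ref{P:properties}(c) yields $W((Q^*)^{\gamma_\lambda})=W(Q^*)$. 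Hence $W(Q)\subseteq W(Q^*)$; the reverse inclusion follows from $Q^{**}\cong Q$. Your observation that $W(M^{\gamma_\lambda})=W(M)$ is exactly the ingredient needed, but it must be threaded through the rigidity summand relation rather than through an $H$-module isomorphism of endomorphism spaces that does not exist.
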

\begin{proof} Every object of $\stmod(u_{\zeta,\Gamma}({\mathfrak b}))$ is rigid. The first composition in \eqref{eq:rigid}
gives that if $Q$ is a finite dimensional $u_{\zeta,\Gamma}({\mathfrak b})$-module, 
then $Q$ is a summand of $Q \otimes Q^* \otimes Q$. So, 
\[
W(Q) \subseteq W(Q \otimes Q^* \otimes Q).
\]
Since $Q$ has a composition series by subquotients isomorphic to the one dimensional modules 
$\lambda\in \widehat{\Gamma/\Gamma'}$,
\[
W(Q \otimes Q^* \otimes Q) = \bigcup_{\lambda\in \widehat{\Gamma/\Gamma'}} W(\lambda \otimes Q^* \otimes Q).
\]
The cohomological support $W$ is automatically a quasi support datum. Applying this fact and Proposition \ref{P:properties}(b-c), we obtain that
\[
W(\lambda \otimes Q^* \otimes Q) \subseteq W((Q^*)^{\gamma_\lambda} \otimes \lambda \otimes Q) \subseteq W((Q^*)^{\gamma_\lambda}) = W(Q^*)
\]
for all $\lambda\in \widehat{\Gamma/\Gamma'}$. Combining the above inclusions gives $W(Q) \subseteq W(Q^*)$. 
Since the square of the antipode of $u_{\zeta,\Gamma}({\mathfrak b})$ is an inner automorphism, $Q^{**} \cong Q$. 
Interchanging the roles of $Q$ and $Q^*$ gives $W(Q^*) \subseteq W(Q)$. Hence, $W(Q) = W(Q^*)$. 
\end{proof}
\begin{proof}[Proof of Theorem \ref{T:classificationl}]
(a) This statement follows by \cite[Theorem 7.4.3]{NVY1}. The (fg) assumption is established in Theorem~\ref{T:bcohofg}.
The arguments in \cite[Section 7.4]{BKN}, Lemma \ref{Pi-supp}, and the faithfulness of the cohomological support verify \cite[Assumption 7.2.1]{NVY1}. 

We will prove (b) and (c) by an analogous argument to \cite[Theorem 6.2.1]{NVY1}. As noted earlier, the cohomological support $W$ is a quasi support datum and satisfies \cite[Assumption 7.2.1]{NVY1}. 
One now needs to show that $W$ satisfies the following 
property:

(Realization) If $V$ is a closed set in $Y$, then there exists a compact object $M$ with $\Phi( \langle M \rangle) =V$.

We compute:
\begin{align*}
\Phi ( \langle M \rangle) &= \bigcup_{C, D \in \bK^c} W( C \otimes M \otimes D)\\
&= \bigcup_{C \in \bK^c} W(C \otimes M)\\
&= \bigcup_{\lambda \in \widehat{\Gamma/\Gamma'}} W(\lambda \otimes M)\\
&= \bigcup_{\lambda \in \widehat{\Gamma/\Gamma'}} W(\lambda \otimes M \otimes \lambda^{-1})\\
&= \bigcup_{\lambda \in \widehat{\Gamma/\Gamma'}} W(M^{\gamma_\lambda})\\
&= \Pi\cdot W(M)\\
&= W(M).
\end{align*}
The second and fourth equalities follow from the fact that $W$ is a quasi support datum, the fourth since
$$W(\lambda \otimes M) \subseteq W(\lambda \otimes M \otimes \lambda^{-1} ) \subseteq W(\lambda \otimes M \otimes \lambda^{-1} \otimes \lambda) = W(\lambda \otimes M).$$ The third, fifth, and seventh equalities follow from 
Proposition~\ref{P:properties}, parts (a), (b), and (d) respectively. Since $\Phi(\langle M \rangle ) = W(M)$ and every closed set of $\Proj R$ may be realized as $W(M)$ for some compact $M$, $W$ satisfies the Realization Property. 

Analogously to the proof of \cite[Theorem 6.2.1]{NVY1}, the conditions that $W$ is a quasi support satisfying \cite[Assumption 7.2.1]{NVY1} and the Realization Property allow one to conclude that there exists an order-preserving bijection:
$$
\{\text{thick two-sided ideals of $\stmod(u_{\zeta,\Gamma}({\mathfrak b}))$}\} \begin{array}{c} {\Phi} \atop {\longrightarrow} \\ {\longleftarrow}\atop{\Theta} \end{array}  
\{\text{specialization closed sets of $\operatorname{Proj}(R)$} \}. 
$$
Since we already know by (a) that $\Phi$ induces a bijection between the thick right ideals of $\stmod(u_{\zeta,\Gamma}({\mathfrak b}))$ and specialization closed sets of $\operatorname{Proj}(R)$, it follows immediately that every thick right ideal is two-sided. 

In order to obtain part (c), we must show that $\Phi$ is a weak support datum. Let $\bI$ and $\bJ$ be two thick ideals of $\stmod(u_{\zeta,\Gamma}({\mathfrak b}))$. We claim that $\langle \bI \otimes \bJ \rangle = \bI \cap \bJ$. It is clear that $\langle \bI \otimes \bJ \rangle \subseteq \bI \cap \bJ$, by definition. Both thick ideals $\langle \bI \otimes \bJ \rangle$ and $\bI \cap \bJ$ of $\stmod(u_{\zeta,\Gamma}({\mathfrak b}))$ are semiprime, by Proposition \ref{rigid-semiprime}. In other words, 
\begin{align*}
\langle \bI \otimes \bJ \rangle = &\bigcap \{ \bP \in \Spc(\stmod(u_{\zeta,\Gamma}({\mathfrak b}))) : \langle \bI \otimes \bJ \rangle \subseteq \bP\}\\
= &\bigcap \{ \bP \in \Spc(\stmod(u_{\zeta,\Gamma}({\mathfrak b}))) : \bI \subseteq \bP\} \cap \\
&\bigcap  \{ \bP \in \Spc(\stmod(u_{\zeta,\Gamma}({\mathfrak b}))) : \bJ \subseteq \bP\},\\
\end{align*}
and
\begin{align*}
\bI \cap \bJ & = \bigcap \{ \bP \in \Spc(\stmod(u_{\zeta,\Gamma}({\mathfrak b}))) : \bI \cap \bJ \subseteq \bP\}.\\
\end{align*}
Then it is clear that $\bI \cap \bJ \subseteq \langle \bI \otimes \bJ \rangle$, since each prime ideal containing either $\bI$ or $\bJ$ must necessarily contain $\bI \cap \bJ$. Therefore, $\bI \cap \bJ = \langle \bI \otimes \bJ \rangle$. By (a), $\Phi$ gives an order-preserving bijection between thick two-sided ideals of $\stmod(u_{\zeta,\Gamma}({\mathfrak b}))$ and specialization closed sets of $\operatorname{Proj}(R)$, which shows that
\begin{align*}
\Phi(\langle \bI \otimes \bJ \rangle) &= \Phi(\bI \cap \bJ)\\
&= \Phi(\bI) \cap \Phi(\bJ).
\end{align*}
Therefore $W$ is a weak support datum, and \cite[Theorem 6.2.1]{NVY1} gives part (c). 
\end{proof}

\subsection{The Tensor Product Property for the Cohomological Support Map}\label{q-Borels}
In this section we illustrate Theorem \ref{compl-prime}. We prove that the cohomological support maps for all small quantum Borel algebras
associated to arbitrary complex simple Lie algebras and arbitrary choices of group-like elements have the tensor product property. 
This was conjectured by Negron and Pevtsova \cite{NP} and proved by them in the type $A$ case.
\begin{theorem}
\label{quant-Borel-spc}  
Let $u_{\zeta,\Gamma}({\mathfrak b})$ be the small quantum group for the Borel subalgebra 
of an arbitrary finite dimensional complex simple Lie algebra and a lattice ${\mathbb Z} \rootsys \subseteq \Gamma \subseteq X$.
Assume that ${\ell}$ satisfies Assumption~\ref{A:assumption2} 
(in particular, ${\ell}>h$). Then the following hold:
\begin{enumerate}
\item[(a)] All prime ideals of $\stmod(u_{\zeta,\Gamma}({\mathfrak b}))$ are completely prime.
\item[(b)] The cohomological support
\[
W (-): \stmod(u_{\zeta,\Gamma}({\mathfrak b})) \to \mc{X}_{cl} (\operatorname{Proj}(\operatorname{H}^{\bullet}(u_{\zeta,\Gamma}({\mathfrak b}),{\mathbb C})))
\]
has the tensor product property $W(A \otimes B)  = W(A) \cap W(B)$ for all $A, B \in \stmod(u_{\zeta, \Gamma}({\mathfrak b}))$.
\end{enumerate}
\end{theorem}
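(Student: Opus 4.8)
The plan is to deduce the theorem as a quick consequence of the structural results already assembled. The key input is Theorem~\ref{T:classificationl}(b), which asserts that every thick right ideal of $\stmod(u_{\zeta,\Gamma}(\mathfrak{b}))$ is two-sided; once this is in hand, part~(a) follows immediately by applying Theorem~\ref{compl-prime} to $\bK=\stmod(u_{\zeta,\Gamma}(\mathfrak{b}))$, which is a monoidal triangulated category because $u_{\zeta,\Gamma}(\mathfrak{b})$ is a finite dimensional Hopf algebra. So the first step is simply to invoke those two results and record that all prime ideals are completely prime.

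For part~(b), I would first observe that by Theorem~\ref{T:classificationl}(c) there is a homeomorphism $f : \operatorname{Proj}(R) \to \Spc(\stmod(u_{\zeta,\Gamma}(\mathfrak{b})))$, and more importantly that under the identifications of Theorem~\ref{T:classificationl}(a) the cohomological support $W$ is carried to the universal support datum $V$ — this is exactly the content of the ``weak support datum'' verification at the end of the proof of Theorem~\ref{T:classificationl}, combined with \cite[Theorem 6.2.1]{NVY1}, which gives that $W$ agrees with $V$ up to the homeomorphism $f$. Concretely, $W(M) = f^{-1}(V(M))$ for all $M$. Since part~(a) plus Theorem~\ref{equiv} shows $V$ has the tensor product property, $V(A\otimes B) = V(A)\cap V(B)$, and pulling back along $f^{-1}$ (which commutes with intersections) yields $W(A\otimes B) = W(A)\cap W(B)$ for all $A, B$. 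This handles the finitely generated stable module category; for the extension to $\StMod$ one uses that $W$ on $\StMod$ is obtained from the compact version via \cite[Theorem 5.5]{BIK1}, and the tensor product property passes along this extension, though the statement as given only asks for $A,B \in \stmod$, so this last remark may be unnecessary.

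The main obstacle is not in the present theorem at all but was already dispatched in the proof of Theorem~\ref{T:classificationl}: establishing that every thick right ideal is two-sided, which required the realization property for $W$ and the $\Pi$-symmetry computation $\Phi(\langle M\rangle) = \Pi \cdot W(M) = W(M)$ using Proposition~\ref{P:properties}. Given that, the present proof is essentially a two-line deduction. The one point deserving care is making sure the identification ``$W$ corresponds to $V$ under $f$'' is stated with enough precision that the tensor product property genuinely transfers — i.e. that $f$ is a homeomorphism intertwining $W$ and $V$ on \emph{objects}, not merely a bijection of ideal lattices — but this is precisely what \cite[Theorem 6.2.1]{NVY1} delivers once $W$ is known to be a weak support datum satisfying the relevant assumptions, both of which are verified in the proof of Theorem~\ref{T:classificationl}. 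Hence I would present part~(b) as: apply Theorem~\ref{equiv} and part~(a) to get the tensor product property for $V$, then transport it to $W$ via the homeomorphism $f$ of Theorem~\ref{T:classificationl}(c), noting that $f^{-1}$ preserves intersections.
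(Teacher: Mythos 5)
Your proposal is correct and follows essentially the same route as the paper: part~(a) is Theorem~\ref{T:classificationl}(b) fed into Theorem~\ref{compl-prime}, and part~(b) transports the tensor product property for $V$ (from part~(a) plus Theorem~\ref{equiv}) to $W$ via the map $f$. The one place you gloss slightly is the passage from the ideal-level identity $\Phi_W(\langle M \rangle) = f^{-1}(V(M))$ supplied by Theorem~\ref{T:universality}(b) to the object-level identity $W(M) = f^{-1}(V(M))$; the paper bridges this by combining Theorem~\ref{T:classificationl}(b) with \eqref{eq:ideals} and the quasi-support-datum property, giving $W(M) \subseteq \Phi_W(\langle M \rangle) = \Phi_W(\langle M \rangle_{\mathrm{r}}) \subseteq W(M)$, and you should make that chain explicit rather than deferring it to \cite[Theorem 6.2.1]{NVY1}.
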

\begin{proof} Part (a) of the theorem follows by combining Theorems \ref{compl-prime} and \ref{quant-Borel-spc}(a). 

(b) Recall the universal support datum 
\[
V : \stmod(u_{\zeta,\Gamma}({\mathfrak b}))\to  \mc{X}_{cp} ( \Spc (\stmod(u_{\zeta,\Gamma}({\mathfrak b}))))
\]
defined in Section~\ref{support}. It follows from Theorem \ref{equiv} and part (a) of this theorem that $V$ has the tensor product property. 

In the proof of Theorem \ref{T:classificationl} it was shown that $W$ is a weak support datum. 
By Theorem \ref{T:universality}(b), there exists a homeomorphism 
\[
f:  \operatorname{Proj}(\operatorname{H}^{\bullet}(u_{\zeta,\Gamma}({\mathfrak b}),{\mathbb C})) \to \Spc (\stmod(u_{\zeta,\Gamma}({\mathfrak b})))
\]
satisfying $\Phi_{W}(\langle M \rangle)= f^{-1}(V(M))$ for all $M \in \stmod(u_{\zeta}({\mathfrak b}))$. 
Applying Theorem \ref{T:classificationl}(b), \eqref{eq:ideals} and the fact that $W$ is a quasi support datum, we obtain 
\[
W(M) \subseteq \Phi(\langle M \rangle) = \Phi(\langle M \rangle_{\mathrm{r}}) \subseteq W(M)
\]
for all $M \in \stmod(u_{\zeta}({\mathfrak b}))$. Therefore, 
\[
W(M) = \Phi(\langle M \rangle)= f^{-1}(V(M)), \quad \forall M \in \stmod(u_{\zeta}({\mathfrak b})).
\]
Now Theorem \ref{compl-prime}, the continuity of $f$ and the fact that the universal support datum $V$ has the tensor product property give
\begin{align*}
W(A \otimes B) &= f^{-1}(V(A \otimes B)) = f^{-1}(V(A) \cap V(B)) 
\\
&= f^{-1}(V(A)) \cap   f^{-1}(V(B)) =  W(A) \cap W(B)
\end{align*}
for all $A, B \in \stmod(u_{\zeta}({\mathfrak b}))$. 
\end{proof} 
Example \ref{ex-X'} and Theorem \ref{quant-Borel-spc} imply the following:
\begin{corollary}
Let $u_{\zeta}({\mathfrak b})$ be the standard small quantum group for the Borel subalgebra 
of an arbitrary finite dimensional complex simple Lie algebra. 
Assume that ${\ell}$ satisfies Assumption~\ref{A:assumption2} and that $\ell$ is coprime to $|X/{\mathbb Z} \rootsys|$.
Then the following hold:
\begin{enumerate}
\item[(a)] All prime ideals of $\stmod(u_{\zeta}({\mathfrak b}))$ are completely prime.
\item[(b)] The cohomological support
\[
W (-): \stmod(u_{\zeta}({\mathfrak b})) \to \mc{X}_{cl} (\operatorname{Proj}(\operatorname{H}^{\bullet}(u_{\zeta}({\mathfrak b}),{\mathbb C})))
\]
has the tensor product property $W(A \otimes B)  = W(A) \cap W(B)$ for all $A, B \in \stmod(u_{\zeta}({\mathfrak b}))$.
\end{enumerate}
\end{corollary}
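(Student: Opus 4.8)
The plan is to deduce this corollary directly from Theorem~\ref{quant-Borel-spc} applied to the root lattice $\Gamma = {\mathbb Z}\rootsys$, using Example~\ref{ex-X'} to identify the two Hopf algebras. First I would note that Assumption~\ref{A:assumption2} forces ${\ell}$ to be odd and coprime to $\{d_1,\dots,d_n\}$ (conditions (a)--(b) of that assumption are precisely this, as remarked after Assumption~\ref{A:assumption1}), while by hypothesis ${\ell}$ is in addition coprime to $|X/{\mathbb Z}\rootsys|$. Thus the coprimeness hypotheses of Example~\ref{ex-X'} (equivalently, of Proposition~\ref{P:G'}(b) for $\Gamma = {\mathbb Z}\rootsys$) are satisfied, and we obtain a Hopf algebra isomorphism $u_{\zeta,{\mathbb Z}\rootsys}({\mathfrak b}) \cong u_{\zeta}({\mathfrak b})$.

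Next I would observe that an isomorphism of finite dimensional Hopf algebras induces an equivalence of stable module categories $\stmod(u_{\zeta,{\mathbb Z}\rootsys}({\mathfrak b})) \simeq \stmod(u_{\zeta}({\mathfrak b}))$ that is simultaneously monoidal and triangulated, and that it induces an isomorphism of graded ${\mathbb C}$-algebras $\operatorname{H}^{\bullet}(u_{\zeta}({\mathfrak b}),{\mathbb C}) \cong \operatorname{H}^{\bullet}(u_{\zeta,{\mathbb Z}\rootsys}({\mathfrak b}),{\mathbb C})$, hence a homeomorphism of the associated $\operatorname{Proj}$'s intertwining the cohomological support maps $W(-)$. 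In particular the intrinsic property ``every prime ideal is completely prime'' and the tensor product property for $W$ are transported across this equivalence.

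Finally I would invoke Theorem~\ref{quant-Borel-spc}, whose hypotheses hold for $\Gamma = {\mathbb Z}\rootsys$ since ${\ell}$ satisfies Assumption~\ref{A:assumption2}: parts (a) and (b) of that theorem give that all prime ideals of $\stmod(u_{\zeta,{\mathbb Z}\rootsys}({\mathfrak b}))$ are completely prime and that $W$ has the tensor product property. Transporting both conclusions along the equivalence of the previous paragraph yields parts (a) and (b) of the corollary. The only point requiring any attention is verifying that the equivalence $\stmod(u_{\zeta,{\mathbb Z}\rootsys}({\mathfrak b})) \simeq \stmod(u_{\zeta}({\mathfrak b}))$ genuinely respects the monoidal triangulated structure, the cohomology ring, and the support construction $W$; this is routine functoriality of $\stmod(-)$, of $\otimes$, of $\operatorname{H}^{\bullet}(-,{\mathbb C})$ and of the Balmer--Iyengar--Krause support under Hopf algebra isomorphisms, so I do not anticipate a genuine obstacle.
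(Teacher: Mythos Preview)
Your argument is correct and matches the paper's approach exactly: the paper simply states that the corollary follows from Example~\ref{ex-X'} and Theorem~\ref{quant-Borel-spc}, which is precisely the identification $u_{\zeta,{\mathbb Z}\rootsys}({\mathfrak b}) \cong u_{\zeta}({\mathfrak b})$ followed by an application of Theorem~\ref{quant-Borel-spc} with $\Gamma = {\mathbb Z}\rootsys$. Your added remarks on transporting the conclusions along the Hopf algebra isomorphism are routine and do not depart from the paper's intent.
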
 
\bre{gamma-ell-borel} Assume that ${\ell}$ satisfies Assumption~\ref{A:assumption2} and that $\ell$ is coprime to $|X/{\mathbb Z} \rootsys|$.
Then by Proposition \ref{P:G'}(b), the small quantum Borel subalgebra $u_{\zeta,\Gamma}({\mathfrak b})$ is based off the group algebra of the lattice $\Gamma/\ell \Gamma$, 
cf. \eqref{group-alg}.
Therefore, the statements in parts (a) and (b) of Theorem \ref{quant-Borel-spc} hold for the version of a small quantum Borel subalgebra
based off the group algebra of the lattice $\Gamma/\ell \Gamma$. 
\ere

\subsection{The Negron--Pevtsova small quantum Borel algebras}
\label{Borels-NP}
In \cite{N,NP} Negron and Pevtsova considered a different version of small quantum Borel subalgebras. 
For a lattice, $\Gamma$, with ${\mathbb Z}\rootsys \subseteq \Gamma \subseteq X$, set
\[
\Gamma^\perp:= \{ \nu \in \Gamma \mid \langle \nu, \Gamma \rangle \subseteq \ell {\mathbb Z} \}. 
\]
Denote the canonical projection
\[
\Gamma \twoheadrightarrow \Gamma/ \Gamma^\perp \quad \mbox{by} \quad \mu \mapsto \overline{\overline{\mu}}.
\]
Let
\[
\widetilde{u}_{\zeta,\Gamma}({\mathfrak t}) \; \; \mbox{denote the group algebra of $\Gamma/\Gamma^\perp$ over ${\mathbb C}$}. 
\]
For $\mu \in \Gamma/\Gamma^\perp$ denote by $K_\mu$ the corresponding element of $u_{\zeta,\Gamma}({\mathfrak t})$.  
Following \cite{N,NP}, define the Hopf algebra
\[
\widetilde{u}_{\zeta,\Gamma}({\mathfrak b})= u_{\zeta}({\mathfrak u}) \# \widetilde{u}_{\zeta,\Gamma}({\mathfrak t})
\]
with relations 
\[
K_{\mu} E_{\alpha} K_{\mu}^{-1}=\zeta^{\langle \alpha,\mu_0 \rangle} E_{\alpha} \quad \mbox{for}
\quad \mu \in \Gamma/ \Gamma', \alpha \in \rootsys^+,
\]
where $\mu_0 \in \Gamma$ is a preimage of $\mu$. By the definition of the lattice $\Gamma^\perp$, the right hand side does not  
depend on the choice of preimage. The coproduct of the generators $E_{\alpha_i}$ is given by
\begin{equation}
\label{coprod2}
\Delta(E_{\alpha_i}) = E_{\alpha_i} \otimes 1 + K_{\overline{\overline{\alpha_i}}} \otimes E_{\alpha_i}
\end{equation}
for $1 \leq i \leq n$. The antipode is given by $S( E_{\alpha_i}) = - K_{\overline{\overline{\alpha_i}}}^{-1} E_{\alpha_i}$.

Clearly, $\Gamma' \supseteq \Gamma^\perp$ and the elements
\[
\{ K_{\mu} \mid \mu \in \Gamma'/ \Gamma^\perp \} 
\]
are in the center of $\widetilde{u}_{\zeta,\Gamma}({\mathfrak b})$. In other words,  
$\widetilde{u}_{\zeta,\Gamma}({\mathfrak b})$ has a larger center than $u_{\zeta,\Gamma}({\mathfrak b})$.

By abuse of notation we will denote by $\mu \mapsto \overline{\mu}$ the canonical projection $\Gamma/\Gamma^\perp \twoheadrightarrow \Gamma/ \Gamma'$, 
recall \eqref{bar}. There exists a surjective Hopf algebra homomorphism
\[
\widetilde{u}_{\zeta,\Gamma}({\mathfrak t}) \twoheadrightarrow u_{\zeta,\Gamma}({\mathfrak t}) 
\]
given by $K_\mu \mapsto K_{\overline{\mu}}$ for $\mu \in \Gamma/\Gamma^\perp$ and 
$E_\alpha \mapsto E_\alpha$ for $\alpha \in \rootsys^+$. Its kernel is the ideal generated by the central elements
\[
\{ K_{\mu} - 1 \mid \mu \in \Gamma'/ \Gamma^\perp \}. 
\]

Let $d$ be the minimal positive integer such that the restriction of $\langle -, - \rangle$ to $\Gamma$ takes values in $ {\mathbb Z}/d$. Choose 
a primitive $(d\ell)$th root of unity $\xi$ such that $\zeta = \xi^d$. Consider the 
symmetric (multiplicative) bicharacter 
\[
\chi : \Gamma/\Gamma^\perp \times \Gamma/\Gamma^\perp \to {\mathbb C}^\times \quad \mbox{given by} \quad
\chi(\mu, \nu) := \xi^{ \langle \mu, \nu \rangle} \; \; \mbox{for $\mu, \nu \in \Gamma/\Gamma^\perp$},
\]  
where $\mu_0$ and $\nu_0$ are preimages of $\mu$ and $\nu$ in $\Gamma$. By the definition of $\Gamma^\perp$, the bicharacter
is well-defined and nondegenerate. It induces the isomorphism
\begin{equation}
\label{phi}
\varphi : \Gamma/\Gamma^\perp \stackrel{\cong}{\longrightarrow} \widehat{\Gamma/\Gamma^\perp} \quad \mbox{given by} \quad
\varphi(\mu) := \chi(\mu, -)  \; \; \mbox{for $\mu \in \Gamma/\Gamma^\perp$}.
\end{equation}
Similarly to the discussion for $u_{\zeta,\Gamma}({\mathfrak t})$, for $\lambda \in \widehat{\Gamma/\Gamma^\perp}$
define the one dimensional representation of $\widetilde{u}_{\zeta,\Gamma}({\mathfrak t})$
\[
K_\mu \mapsto \lambda(\mu), \quad
E_\alpha \mapsto 0, \quad \forall \mu \in \Gamma/\Gamma^\perp, \alpha \in \rootsys^+.
\]
The irreducible representations of 
$\widetilde{u}_{\zeta,\Gamma}({\mathfrak t})$ are one-dimensional and are indexed by $\widehat{\Gamma/\Gamma^\perp}$. 
We have a much simplified version of Proposition \ref{P:properties} for the algebras $\widetilde{u}_{\zeta,\Gamma}({\mathfrak t})$:
\begin{proposition}  
\label{N-properties} \cite{N}
\begin{itemize}
\item[(a)] The irreducible representations for $\widetilde{u}_{\zeta,\Gamma}({\mathfrak b})$ are one-dimensional and are precisely the 
representations $\lambda$ for $\lambda \in \widehat{\Gamma/\Gamma^\perp}$.
\item[(b)] For any $\widetilde{u}_{\zeta,\Gamma}({\mathfrak b})$-module, $Q$, and $\lambda \in \widehat{\Gamma/\Gamma^\perp}$ one has 
\[ 
\lambda \otimes Q \otimes \lambda^{-1} \cong Q.
\]
\end{itemize} 
\end{proposition}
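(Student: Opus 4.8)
The plan is to mirror the proof of Proposition~\ref{P:properties}, the extra input being that the enlarged torus $\widetilde{u}_{\zeta,\Gamma}({\mathfrak t}) = {\mathbb C}[\Gamma/\Gamma^\perp]$ carries enough grouplike elements to make the relevant twist automorphisms inner.

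For part (a) I would argue exactly as in Proposition~\ref{P:properties}(a). The relations $E_\beta^\ell = 0$ place all root vectors into the nilpotent augmentation ideal of $u_\zeta({\mathfrak u})$, and since the grading on $u_\zeta({\mathfrak u})$ is preserved by conjugation by the $K_\mu$, the two-sided ideal of $\widetilde{u}_{\zeta,\Gamma}({\mathfrak b})$ generated by the $E_\beta$ is nilpotent, hence lies in the Jacobson radical. Thus every irreducible $\widetilde{u}_{\zeta,\Gamma}({\mathfrak b})$-module is annihilated by all $E_\beta$ and factors through the quotient $\widetilde{u}_{\zeta,\Gamma}({\mathfrak b}) \twoheadrightarrow \widetilde{u}_{\zeta,\Gamma}({\mathfrak t}) = {\mathbb C}[\Gamma/\Gamma^\perp]$; since $\Gamma/\Gamma^\perp$ is a finite abelian group and ${\mathbb C}$ is algebraically closed, its irreducibles are one-dimensional and indexed by $\widehat{\Gamma/\Gamma^\perp}$, yielding precisely the listed representations $\lambda$.

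For part (b) I would first record, exactly as in Proposition~\ref{P:properties}(b), that the coproduct formula~\eqref{coprod2} gives $\lambda \otimes Q \otimes \lambda^{-1} \cong Q^{\gamma_\lambda}$, where $\gamma_\lambda \in \Aut(\widetilde{u}_{\zeta,\Gamma}({\mathfrak b}))$ is determined by $\gamma_\lambda(K_\mu) = K_\mu$ and $\gamma_\lambda(E_\alpha) = \lambda(\overline{\overline{\alpha}})E_\alpha$ (this uses only that $\lambda$ kills the $E$'s and that $\{K_\mu, E_{\alpha_i}\}$ generates). The crux is then to show that $\gamma_\lambda$ is inner. Because the bicharacter $\chi$ is nondegenerate, $\varphi \colon \Gamma/\Gamma^\perp \stackrel{\cong}{\longrightarrow} \widehat{\Gamma/\Gamma^\perp}$ is an isomorphism, so there is a unique $\mu \in \Gamma/\Gamma^\perp$ with $\lambda = \varphi(\mu) = \chi(\mu,-)$. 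From the choice of $\xi$ and the definition of $\chi$ (and the fact that $\langle\mu_0,\alpha\rangle \in {\mathbb Z}$ for a root $\alpha$) one gets $\lambda(\overline{\overline{\alpha}}) = \chi(\mu,\overline{\overline{\alpha}}) = \zeta^{\langle\alpha,\mu_0\rangle}$, which is exactly the scalar by which conjugation by $K_\mu$ rescales $E_\alpha$; since $\Ad(K_\mu)$ also fixes every $K_\nu$, this forces $\gamma_\lambda = \Ad(K_\mu)$. Finally, twisting by an inner automorphism gives an isomorphic module: the map $q \mapsto K_\mu^{-1}q$ is an isomorphism $Q^{\Ad(K_\mu)} \stackrel{\cong}{\longrightarrow} Q$, so $\lambda \otimes Q \otimes \lambda^{-1} \cong Q^{\gamma_\lambda} = Q^{\Ad(K_\mu)} \cong Q$.

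The main obstacle is the middle step of part (b) --- checking that $\gamma_\lambda$ equals conjugation by the grouplike $K_{\varphi^{-1}(\lambda)}$. This rests on the precise compatibility of the bicharacter $\chi$ with the commutation relations~\eqref{u-act} and the coproduct~\eqref{coprod2}, and it is exactly what separates $\widetilde{u}_{\zeta,\Gamma}({\mathfrak b})$ from $u_{\zeta,\Gamma}({\mathfrak b})$: for the latter the analogous $\gamma_\lambda$ need not be inner, which is why Proposition~\ref{P:properties}(b) produces only $\lambda \otimes Q \otimes \lambda^{-1} \cong Q^{\gamma_\lambda}$. I would therefore take care to verify first that the normalization conventions (the integer $d$ and the root of unity $\xi$ with $\xi^d = \zeta$) really make $\chi$ a well-defined, symmetric, nondegenerate bicharacter on $\Gamma/\Gamma^\perp$ whose restriction to the image of ${\mathbb Z}\rootsys$ is the character $\overline{\overline{\alpha}} \mapsto \zeta^{\langle\alpha,\mu_0\rangle}$; once this is in place the argument runs uniformly over all complex simple types and all lattices $\Gamma$ with ${\mathbb Z}\rootsys \subseteq \Gamma \subseteq X$.
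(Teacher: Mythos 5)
Your proposal follows exactly the same route as the paper: for (a) you reproduce the nilpotent-radical argument of Proposition~\ref{P:properties}(a) applied to the quotient $\widetilde{u}_{\zeta,\Gamma}({\mathfrak t}) = {\mathbb C}[\Gamma/\Gamma^\perp]$, and for (b) you first identify $\lambda\otimes Q\otimes\lambda^{-1}$ with a twist $Q^{\gamma_\lambda}$ via the coproduct~\eqref{coprod2} and then use the nondegeneracy of the bicharacter $\chi$ to show $\gamma_\lambda = \Ad(K_{\varphi^{-1}(\lambda)})$ is inner, so the twist is isomorphic to $Q$. This is precisely the two-step argument the paper gives, and your extra remarks about verifying the normalization of $\chi$ (so that on the image of ${\mathbb Z}\rootsys$ it reproduces the conjugation scalars $\zeta^{\langle\alpha,\mu_0\rangle}$) are a sensible sanity check rather than a deviation.
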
 
Part (a) is proved in the same way as Proposition \ref{P:properties}(a). 
Part (b) follows at once by combining the following two facts:
\begin{enumerate}
\item  For any $\widetilde{u}_{\zeta,\Gamma}({\mathfrak b})$-module, $Q$, and $\lambda \in \widehat{\Gamma/\Gamma^\perp}$, $\lambda \otimes Q \otimes \lambda^{-1} \cong Q^{\gamma''_\lambda}$ 
where, ${\gamma''_\lambda}$ is the automorphism of $\widetilde{u}_{\zeta,\Gamma}({\mathfrak b})$
given by 
\[
\gamma''_\lambda (E_\alpha)=  \lambda(\overline{\overline{\alpha}}) E_\alpha, \quad \gamma_\lambda(K_\mu)=K_\mu, 
\quad \forall \mu \in \Gamma/\Gamma^\perp, \alpha \in \rootsys^+
\]
(this follows from \eqref{coprod2}); 
\item $\gamma''_\lambda$ equals the an inner automorphism $x \mapsto K_{\varphi^{-1}(\mu)} x K_{\varphi^{-1}(\mu)}^{-1}$ (this follows from \eqref{phi}).
\end{enumerate} 

From this point further the proofs of Theorems \ref{cohom}, \ref{quant-Borel-spc}, and \ref{quant-Borel-spc}, extend mutatis mutandis from the family of algebras
$u_{\zeta,\Gamma}({\mathfrak b})$ to the family of algebras $\widetilde{u}_{\zeta,\Gamma}({\mathfrak b})$.  Furthermore, there is a simplification in 
the proof of the analog of Theorem \ref{quant-Borel-spc}: on the third line of the long display $\lambda \otimes M \otimes \lambda^{-1} \cong M$
and the rest of the equalities in the display can be omitted. This proves the following:
\begin{theorem}
\label{NG-version}  
Let $\widetilde{u}_{\zeta,\Gamma}({\mathfrak b})$ be the version of the small quantum group for the Borel subalgebra 
of an arbitrary finite dimensional complex simple Lie algebra and a lattice ${\mathbb Z} \rootsys \subseteq \Gamma \subseteq X$ defined in \cite{N}. 
Assume that ${\ell}$ satisfies Assumption~\ref{A:assumption2}. Then the following hold:
\begin{enumerate}
\item[(a)] $\operatorname{H}^{2\bullet+1}(\widetilde{u}_{\zeta,\Gamma}({\mathfrak b}),{\mathbb C})=0$ 
and $R:=\operatorname{H}^{2\bullet}(\widetilde{u}_{\zeta,\Gamma}({\mathfrak b}),{\mathbb C})\cong S^{\bullet}({\mathfrak u}^{*})^{[1]}$. 
\item[(b)] There exist two mutually inverse bijections
$$
\{\text{thick right ideals of $\stmod(\widetilde{u}_{\zeta,\Gamma}({\mathfrak b}))$}\} \begin{array}{c} {\Phi} \atop {\longrightarrow} \\ {\longleftarrow}\atop{\Theta} \end{array}  
\{\text{specialization closed sets of $\operatorname{Proj}(R)$} \},
$$
where $\Phi$ and $\Theta$ are given by
$$
\Phi(\bI) := \bigcup_{A \in \bI} W(A)
$$
for the cohomological support $W : \stmod(\widetilde{u}_{\zeta,\Gamma}({\mathfrak b})) \to \mc{X}_{cl}(\operatorname{Proj}(R))$ and
$$
\Theta(Z):=\{M\in \stmod(u_{\zeta,\Gamma}({\mathfrak b})) \mid W(M)\subseteq Z\} \quad 
\mbox{for} \quad Z\in \mc{X}_{sp}(\operatorname{Proj}(R)).
$$ 
\item[(c)] Every thick right ideal of $\stmod(\widetilde{u}_{\zeta,\Gamma}({\mathfrak b}))$ is two-sided. 
\item[(d)] There exists a homeomorphism $\operatorname{Proj}(R) \cong \Spc(\stmod(u_{\zeta,\Gamma}({\mathfrak b})))$. 
\item[(e)] All prime ideals of $\stmod(\widetilde{u}_{\zeta,\Gamma}({\mathfrak b}))$ are completely prime.
\item[(f)] The cohomological support
\[
W (-): \stmod(\widetilde{u}_{\zeta,\Gamma}({\mathfrak b})) \to \mc{X}_{cl} (\operatorname{Proj} R)
\]
has the tensor product property $W(A \otimes B)  = W(A) \cap W(B)$ for all $A, B \in \stmod(\widetilde{u}_{\zeta, \Gamma}({\mathfrak b}))$.
\end{enumerate}
\end{theorem}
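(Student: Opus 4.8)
The plan is to transport parts (a)--(f) from the corresponding statements already established for the family $u_{\zeta,\Gamma}({\mathfrak b})$, tracking the one structural change: the torus $\widetilde{u}_{\zeta,\Gamma}({\mathfrak t})$ is the group algebra of $\Gamma/\Gamma^\perp$ rather than of $\Gamma/\Gamma'$, and on modules the relevant automorphism group acts trivially up to isomorphism by Proposition~\ref{N-properties}(b), a strictly stronger statement than Proposition~\ref{P:properties}(b)--(d).

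First I would prove (a) by rerunning the proof of Theorem~\ref{cohom}. As in Section~\ref{sec:prelim-qg}, $u_\zeta({\mathfrak u})$ is normal in $\widetilde{u}_{\zeta,\Gamma}({\mathfrak b})$ with quotient $\widetilde{u}_{\zeta,\Gamma}({\mathfrak t})$, and the filtration of \cite[Section 2.9]{BNPP} is stable under $\widetilde{u}_{\zeta,\Gamma}({\mathfrak t})$; since $\widetilde{u}_{\zeta,\Gamma}({\mathfrak t})$ is a complex group algebra of a finite abelian group, its modules are completely reducible, which gives $\operatorname{H}^\bullet(\widetilde{u}_{\zeta,\Gamma}({\mathfrak b}),{\mathbb C}) \cong \operatorname{H}^\bullet(u_\zeta({\mathfrak u}),{\mathbb C})^{\widetilde{u}_{\zeta,\Gamma}({\mathfrak t})}$ together with a spectral sequence as in \eqref{eq:spectral2}. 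The one arithmetic input is that a $\widetilde{u}_{\zeta,\Gamma}({\mathfrak t})$-fixed weight $\lambda$ of $\Lambda^\bullet({\mathfrak u}^*)$ satisfies $\langle \lambda, \Gamma\rangle \subseteq \ell{\mathbb Z}$, hence $\langle \lambda, \alpha_i\rangle \in \ell{\mathbb Z}$ for all $i$; combining $\gcd(\ell, d_i)=1$ with the Coxeter bound $|\langle \lambda, \alpha_i\spcheck\rangle| \le h < \ell$ of Lemma~\ref{L:fixedpoints} forces $\langle \lambda, \alpha_i\spcheck\rangle = 0$ for all $i$, so $\lambda = 0$ and the exterior fixed points collapse to ${\mathbb C}$ in degree $0$. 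The finite generation statement (the analog of Theorem~\ref{T:bcohofg}, proved mutatis mutandis, so that $d_r(S^\bullet({\mathfrak u}^*)^{[1]}) = 0$ for $r\ge 1$) then makes the spectral sequence degenerate, yielding (a).

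With (fg) available I would establish (b)--(d) by copying the proof of Theorem~\ref{T:classificationl}. The cohomological support $W$ is a quasi support datum; verifying \cite[Assumption 7.2.1]{NVY1} requires the analog of Lemma~\ref{Pi-supp}, namely $W(Q)=W(Q^*)$, whose proof simplifies here because $\lambda\otimes Q^*\otimes\lambda^{-1}\cong Q^*$ by Proposition~\ref{N-properties}(b) and $Q^{**}\cong Q$ since the square of the antipode is inner. Then \cite[Theorem 7.4.3]{NVY1} gives the right-ideal classification of (b); the Realization Property computation shortens, by Proposition~\ref{N-properties}, to $\Phi(\langle M\rangle)=\bigcup_{\lambda}W(\lambda\otimes M)=\bigcup_\lambda W(\lambda\otimes M\otimes\lambda^{-1})=W(M)$, and the argument of \cite[Theorem 6.2.1]{NVY1} produces the matching classification of thick two-sided ideals; comparing the two bijections forces every thick right ideal to be two-sided, which is (c). Checking that $\Phi$ is a weak support datum, using semiprimeness of all thick ideals (Proposition~\ref{rigid-semiprime}) exactly as in the proof of Theorem~\ref{T:classificationl}, then gives the homeomorphism of (d) via \cite[Theorem 6.2.1]{NVY1}.

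Finally, (e) is immediate from (c) and Theorem~\ref{compl-prime}, and (f) follows as in the proof of Theorem~\ref{quant-Borel-spc}(b): by (e) and Theorem~\ref{equiv} the universal support datum $V$ has the tensor product property; $W$ is a weak support datum, so Theorem~\ref{T:universality}(b) provides a homeomorphism $f$ with $\Phi_W(\langle M\rangle)=f^{-1}(V(M))$, and using $\langle M\rangle=\langle M\rangle_{\mathrm r}$ (valid by (c), cf. \eqref{eq:ideals}) together with the quasi support datum property one gets $W(M)=\Phi_W(\langle M\rangle)=f^{-1}(V(M))$; transporting the tensor product property of $V$ along $f^{-1}$ gives $W(A\otimes B)=W(A)\cap W(B)$. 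I expect the only non-routine point to be re-verifying \cite[Assumption 7.2.1]{NVY1} in the $\widetilde{u}_{\zeta,\Gamma}({\mathfrak b})$ setting --- that the cohomological support stays faithful and compatible with the relevant automorphism action --- since everything else is a clean substitution of $\Gamma^\perp$ for $\Gamma'$ and of Proposition~\ref{N-properties} for Proposition~\ref{P:properties}.
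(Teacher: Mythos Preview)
Your proposal is correct and follows essentially the same approach as the paper: the paper's proof is the terse statement that the proofs of Theorems~\ref{cohom}, \ref{T:classificationl}, and \ref{quant-Borel-spc} extend mutatis mutandis, with the simplification you identify (that $\lambda\otimes M\otimes\lambda^{-1}\cong M$ directly by Proposition~\ref{N-properties}(b), shortening the Realization Property display). The paper additionally observes, as a remark after the theorem, that part (c) can be obtained even more directly from Proposition~\ref{N-properties}(b) alone---since $\lambda\otimes M\cong M\otimes\lambda$ for every simple $\lambda$, induction on composition length shows every thick right ideal is already left-stable---but your route via comparing the right-ideal and two-sided-ideal bijections is the one the paper uses in the body of the argument.
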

There is a further simplification in the proof of part (c) of the theorem compared to that of Theorem \ref{T:classificationl}(b). 
Since the algebras $\widetilde{u}_{\zeta,\Gamma}({\mathfrak b})$ satisfy the property in Proposition \ref{N-properties}(b), 
part (c) of the theorem also follows directly from this property.

\end{document}